\theoremstyle{plain}
\newtheorem{theorem}[equation]{Theorem}
\newtheorem{proposition}[equation]{Proposition}
\newtheorem{lemma}[equation]{Lemma}
\newtheorem{corollary}[equation]{Corollary}
\newcommand{\red}[1]{{\color{red}{#1}}} 
\theoremstyle{remark}
\newtheorem{remark}[equation]{Remark}
\numberwithin{equation}{section}
\newcommand{\cut}{\setminus}
\DeclareMathOperator{\sinc}{sinc}
\renewcommand{\hat}{\widehat}
\newcommand{\wt}{\widetilde}
\newcommand{\ol}{\overline}
\newcommand\reallywidehat[1]{%
\savestack{\tmpbox}{\stretchto{%
  \scaleto{%
    \scalerel*[\widthof{\ensuremath{#1}}]{\kern.1pt\mathchar"0362\kern.1pt}%
    {\rule{0ex}{\textheight}}
  }{\textheight}%
}{2.4ex}}%
\stackon[-6.9pt]{#1}{\tmpbox}%
}
\DeclareMathOperator{\re}{Re}
\def\norm#1{\left\Vert#1\right\Vert}
\def\<<>>#1#2{\!\left\langle\!\left\langle#1,#2\right\rangle\!\right\rangle\!}
\let\originalleft\left
\let\originalright\right
\renewcommand{\left}{\mathopen{}\mathclose\bgroup\originalleft}
\renewcommand{\right}{\aftergroup\egroup\originalright}
\newcommand{\ce}{{\mathcal E}}
\newcommand{\ch}{{\mathcal H}}
\newcommand{\ci}{{\mathcal I}}
\newcommand{\co}{{\mathcal O}}
\newcommand{\cs}{{\mathcal S}}
\newcommand{\sB}{{\mathscr B}}
\newcommand{\C}{{\mathbb C}}
\newcommand{\D}{{\mathbb D}}
\newcommand{\R}{{\mathbb R}}
\begin{document}

\title[Cauchy and Szeg\H{o} in dual Hardy spaces]{Cauchy transforms and Szeg\H{o} projections in dual Hardy spaces: inequalities and  Möbius invariance}
\author{David E. Barrett \& Luke D. Edholm}
\begin{abstract}
Dual pairs of interior and exterior  Hardy spaces associated to a simple closed Lipschitz planar curve are considered, leading to a Möbius invariant function bounding the norm of the Cauchy transform $\bm{C}$ from below. 
This function is shown to satisfy strong rigidity properties and is closely connected via the Berezin transform to the square of the Kerzman-Stein operator.
Explicit example calculations are presented. For ellipses, a new asymptotically sharp lower bound on the norm of $\bm{C}$ is produced.
\end{abstract}
\thanks{The first author was supported in part by NSF grant number DMS-1500142}
\thanks{The second author was supported in part by Austrian Science Fund (FWF) grants: DOI 10.55776/I4557 and DOI 10.55776/P36884.}
\thanks{{\em 2020 Mathematics Subject Classification:}  30C40 (Primary); 30H10, 30E20, 32A25, 46E22   (Secondary)}
\address{Department of Mathematics\\University of Michigan, Ann Arbor, MI, USA}
\email{barrett@umich.edu}
\address{Department of Mathematics\\Universit\"at Wien, Vienna, Austria}
\email{luke.david.edholm@univie.ac.at}

\maketitle


\section{Introduction}\label{S:Intro}

Let $\gamma$ be a simple closed oriented Lipschitz curve in the Riemann sphere $\hat\C$ bounding a domain $\Omega_+$ to the left and $\Omega_-$ to the right. 
Each domain admits a Hardy space, denoted respectively by $\ch_+^2(\gamma)$ and $\ch_-^2(\gamma)$, consisting of holomorphic functions with square integrable boundary values. 
(Precise definitions are given in Section \ref{SS:curves-in-Riemann-sphere}.)
In this paper we investigate the interaction between the Hardy spaces on $\Omega_+$ and $\Omega_-$ and use our findings to deduce norm estimates and prove invariance and rigidity theorems related to two classical projection operators: the Szeg\H{o} projection, $\bm{S}$, and the Cauchy transform, $\bm{C}$.

These operators and the connections between them are well studied.
Of particular importance is a breakthrough made by Kerzman and Stein in \cite{KerSte78a} where it was shown that for smooth $\gamma$, their eponymously named operator $\bm{A} := \bm{C} - \bm{C}^*$ is compact.
This observation led to the formula $\bm{C} = \bm{S}(I+\bm{A})$ relating the Cauchy and Szeg\H{o} projections, making it possible to use known information about $\bm{S}$ to study $\bm{C}$, and vice versa.
The Kerzman-Stein operator established an alternative foundation upon which both Hardy space theory and much of classical complex analysis could be developed; this is the theme of Bell's book \cite{Bell16}.

One aim of the present paper is to investigate the function
\begin{equation}\label{E:Cauchy-Szego-ratio-intro}
z \mapsto \left( \int_\gamma |C(z,\zeta)|^2\,d\sigma(\zeta) \right)^{\frac{1}{2}}
\left( \int_\gamma |S(z,\zeta)|^2\,d\sigma(\zeta) \right)^{-\frac{1}{2}},
\tag{$*$}
\end{equation}
where $C$ is the Cauchy kernel, $S$ is the Szeg\H{o} kernel and $\sigma$ is arc length measure.
This function has a number of remarkable properties and, unsurprisingly, encodes detailed information about the Cauchy transform, the Szeg\H{o} projection and how the two operators interact.
A close relationship between $(*)$ and $\bm{A}$ (or more precisely $\bm{A} \circ \bm{A}$) via the Berezin transform is shown to hold (see Proposition \ref{P:concat-Berezin-comps}), and there are situations (e.g. Theorem \ref{C:vanishing-of-the-Kerzman-Stein-kernel}) where direct analysis of $(*)$ recaptures and even strengthens results previously obtained from the Kerzman-Stein operator.

The following theme pervades the paper:
it is natural and informative to consider the pieces of $(*)$ on $\Omega_+$ and $\Omega_-$ together as a single object.
With this in mind, several pairs of objects associated to a simple closed Lipschitz curve $\gamma$ will be considered in tandem:
\begin{enumerate}
\item {\em Domains and function spaces.} 
The interior and exterior domains $\Omega_+$ and $\Omega_-$, along with the associated Hardy spaces $\ch_+^2(\gamma)$ and $\ch_-^2(\gamma)$.

\item {\em Projection operators and kernel functions.} 
The Cauchy transforms ($\bm{C}_+$ and $\bm{C}_-$) and Szeg\H{o} projections ($\bm{S}_+$ and $\bm{S}_-$) on the interior and exterior domains, together with the representative kernel functions ($C_+$, $C_-$, $S_+$ and $S_-$).

\item {\em Two pairings of functions in $L^2(\gamma)$.} 
The usual inner product $\langle f, g \rangle$, along with a $\C$-bilinear pairing $\<<>>{f}{g}$ defined in  \eqref{E:bilinear-pairing-def} below.
The second pairing yields an alternative characterization of the Hardy dual spaces that underlies much of the theory we develop.
\end{enumerate}

\subsection{Interior and exterior projections}
Throughout the paper, many arguments can be carried out simultaneously in the interior ($\Omega_+$, $\ch_+^2(\gamma)$, $\bm{S}_+$, etc.) and exterior ($\Omega_-$, $\ch_-^2(\gamma)$, $\bm{S}_-$, etc.) settings.
Whenever possible our notation will reflect this, as we now demonstrate.

One way to construct holomorphic functions on $\Omega_+$ with $L^2(\gamma)$ boundary values is the Szeg\H{o} projection $\bm{S}_+$, the orthogonal projection from $L^2(\gamma)$ onto its holomorphic subspace $\ch_+^2(\gamma)$.
Given $h\in L^2(\gamma)$,
\begin{equation}\label{E:Szego-proj-def}
\bm{S}_+ h(z) = \int_{\gamma} S_+(z,\zeta)h(\zeta)\,d\sigma(\zeta), \qquad z \in \Omega_+,
\end{equation}
where $S_+(z,\zeta)$ is the Szeg\H{o} kernel of $\ch_+^2(\gamma)$ and $d\sigma$ is arc length measure.
This kernel is conjugate symmetric, i.e., $\ol{S_+(z,\zeta)} = S_+(\zeta,z)$, and for fixed $z \in \Omega_+$, $S_+(\cdot,z) \in \ch_+^2(\gamma)$.
Since $\bm{S}_+$ is an orthogonal projection onto $\ch_+^2(\gamma)$, we immediately obtain the reproducing property that $\bm{S}_+f = f$ for $f \in \ch_+^2(\gamma)$, as well as the fact that $\bm{S}_+^* = \bm{S}_+$.

There is a corresponding Szeg\H{o} projection $\bm{S}_-$ from $L^2(\gamma)$ onto $\ch_-^2(\gamma)$ given by a formula à la \eqref{E:Szego-proj-def}, but now using $S_-(z,\zeta)$, the Szeg\H{o} kernel of $\ch_-^2(\gamma)$, as the representative kernel.
The same basic properties of $\bm{S}_+$ and $S_+$ mentioned above hold for $\bm{S}_-$ and $S_-$, though in general the kernel functions $S_+$ and $S_-$ themselves bear no obvious resemblance.

When we meet situations as described above, where parallel facts hold in the interior and exterior settings, the presentation will be streamlined as follows:
\begin{quotation}
{\it ``The Szeg\H{o} projection $\bm{S}_\pm$ is an orthogonal projection from $L^2(\gamma)$ onto $\ch^2_\pm(\gamma)$."} 
\end{quotation}
is a condensed way of writing two statements at once.
The original string is meant to be read {\em exactly twice}, once using only the {\em top} signs, and once using only the {\em bottom} signs:
\begin{itemize}
\item {\it The Szeg\H{o} projection $\bm{S}_+$ is an orthogonal projection from $L^2(\gamma)$ onto $\ch^2_+(\gamma)$.}
\item {\it The Szeg\H{o} projection $\bm{S}_-$ is an orthogonal projection from $L^2(\gamma)$ onto $\ch^2_-(\gamma)$.}
\end{itemize}

A second way to construct holomorphic functions from $L^2$ boundary data is the Cauchy transform.
Let $\gamma$ be a simple closed Lipschitz curve in the plane and let $T$ be the (a.e. defined) unit tangent vector pointing in the counterclockwise direction.
Given $h \in L^2(\gamma)$, interior and exterior holomorphic functions $\bm{C}_\pm h \in \co(\Omega_\pm)$ are generated via the Cauchy integral
\begin{subequations}
\begin{align}
\bm{C}_{\pm}h(z) 
&= \frac{1}{2 \pi i} \oint_\gamma \frac{h(\zeta)}{\zeta-z}\,d\zeta, = \int_{\gamma} C_\pm(z,\zeta) h(\zeta) \,d\sigma(\zeta), \qquad z\in \Omega_\pm, \label{E:Cauchy-integral-formula}
\end{align}
where, upon noting that $d\zeta = \pm T(\zeta) \,d\sigma(\zeta)$, the Cauchy kernel is defined as
\begin{equation}\label{E:Cauchy-kernel-def}
C_\pm(z,\zeta) =  \frac{\pm T(\zeta)}{2\pi i(\zeta-z)}.
\end{equation}
\end{subequations}
The choice of $\pm$ specifies orientation so that holomorphic functions are reproduced.

When $z \in \gamma$, the integral \eqref{E:Cauchy-integral-formula} no longer converges in the ordinary sense.
But if non-tangential limits (see Section \ref{SS:curves-in-Riemann-sphere}) of the holomorphic function $\bm{C}_\pm h \in \co(\Omega_\pm)$ are taken, we obtain the following principle value integral for a.e. $z \in \gamma$:
\begin{equation}\label{E:Cauchy-transform-boundary}
\bm{C}_\pm h(z) = \frac{h(z)}{2} + \frac{1}{2} \,\mathrm{P.V.} \int_{\gamma} C_\pm(z,\zeta) h(\zeta)\, d\sigma(\zeta).
\end{equation}

The notion of a principle value -- where the integral is calculated over the curve with  a small {\em symmetric} portion of $\gamma$ about $z$ excised, and a limit is taken as the endpoints of the excision are sent to $z$ at the {\em same rate} -- makes sense when $\gamma$ is a $C^1$ curve and $h \in C^1(\gamma)$.
But the scope of this notion extends to a wider setting thanks to a deep result of Coifman, McIntosh and Meyer \cite{CMM82}, which says that when $\gamma$ is a Lipschitz curve, the principle value integral in \eqref{E:Cauchy-transform-boundary} both exists for almost every $z \in \gamma$ and defines a bounded operator on $L^p(\gamma,\sigma)$, $1<p<\infty$.


\subsection{Duals of Hardy spaces}

Let $\gamma$ be a simple closed oriented Lipschitz curve.
Consider two related pairings of $f,g \in L^2(\gamma)$:
the usual inner product $\langle\cdot,\cdot\rangle$ and a ($\C$-)bilinear pairing $\<<>>{\cdot}{\cdot}$ given by
\begin{equation}\label{E:bilinear-pairing-def}
\langle f, g \rangle = \int_{\gamma} f(\zeta)\ol{g(\zeta)}\,d\sigma(\zeta), \qquad\qquad \<<>>{f}{g} = \oint_{\gamma} f(\zeta)g(\zeta)\,d\zeta.
\end{equation}
Since $d\zeta = T(\zeta)\,d\sigma(\zeta)$, these pairings are related by $\langle f, g \rangle = \<<>>{f}{\ol{g T}}$ and $\<<>>{f}{g} = \langle f, \ol{g T} \rangle$, where $T$ is the unit tangent agreeing with the orientation of $\gamma$.

Since $\ch_\pm^2(\gamma)$ is a Hilbert space, the inner product $\langle\cdot,\cdot\rangle$ facilitates the canonical isometric duality self-identification $\ch_\pm^2(\gamma)' \cong \ch_\pm^2(\gamma)$.
The bilinear pairing $\<<>>{\cdot}{\cdot}$ facilitates a quasi-isometric dual space identification of the interior and exterior Hardy spaces:
\begin{equation}\label{E:dual-space-identification}
\ch_\pm^2(\gamma)' \simeq \ch_\mp^2(\gamma),
\end{equation}
see Section \ref{SS:Two-pairings-dual-spaces}, and in particular, Proposition \ref{P:interior-exterior-dual-identification}.

\subsection{The Cauchy-Szeg\H{o} $\Lambda$-function}\label{SS:Intro-section-defining-Lambda}
Let $\gamma$ be a simple closed bounded Lipschitz curve oriented counterclockwise in the plane.
Define two real-valued functions
\begin{subequations}
\begin{align*}
&\Lambda_+(\gamma,z) = \frac{\norm{C_+(z,\cdot)}_{L^2(\gamma)}}{\sqrt{S_+(z,z)}}, \qquad z \in \Omega_+, \\
&\Lambda_-(\gamma,z) = \frac{\norm{C_-(z,\cdot)}_{L^2(\gamma)}}{\sqrt{S_-(z,z)}}, \qquad z \in \Omega_- \backslash \{ \infty \}.
\end{align*}
\end{subequations}

Now combine them to form the {\em Cauchy-Szeg\H{o} $\Lambda$-function}, a real-valued function defined on the Riemann sphere by:
\begin{equation}\label{E:Lambda-def}
\Lambda(\gamma,z) 
= \begin{dcases}
\Lambda_\pm(\gamma,z), \qquad&z \in \Omega_\pm \backslash \{\infty\}, \\
1, \qquad &z \in \gamma, \\
\sqrt{\tfrac{\sigma(\gamma)}{2\pi \kappa(\gamma)}}, \qquad &z = \infty,
\end{dcases}
\end{equation}
where $\sigma(\gamma)$ denotes arc length and $\kappa(\gamma)$ denotes analytic capacity (see Section \ref{SS:Analytic-Capacity}).

\subsubsection{Basic properties}\label{SSS:basic-properties}
The assigned values for $z \in \gamma$ and $z = \infty$ are very natural:
\begin{theorem}\label{T:intro-3-props-of-Lambda}
Let $\gamma$ be a simple closed Lipschitz curve in the plane.
Then
\begin{enumerate}
\item $\Lambda(\gamma,z)$ is continuous as $z \to \infty$. \label{I:Lambda-cont-at-infty}
\item If $\gamma$ is $C^1$ smooth and $\zeta_0 \in \gamma$, then $\Lambda(\gamma,z)$ is continuous as $z \to \zeta_0$. \label{II:Lambda-cont-at-curve}
\item If $\Phi$ is a Möbius transformation with pole off of $\gamma$, then $\Lambda(\gamma,z) = \Lambda(\Phi(\gamma),\Phi(z))$. \label{III:Möbius-invariance}
\end{enumerate}
\end{theorem}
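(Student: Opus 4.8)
The plan is to reduce all three statements to a single observation: writing $c_z(\zeta)=\tfrac{1}{2\pi i(\zeta-z)}$, the defining formula \eqref{E:Cauchy-kernel-def} together with $|T|\equiv 1$ gives $\norm{C_\pm(z,\cdot)}_{L^2(\gamma)}=\norm{c_z}_{L^2(\gamma)}$, while the reproducing property makes $S_\pm(z,z)=\norm{S_\pm(\cdot,z)}_{L^2(\gamma)}^2$. Thus $\Lambda_\pm(\gamma,z)^2=\norm{c_z}_{L^2(\gamma)}^2\big/\norm{S_\pm(\cdot,z)}_{L^2(\gamma)}^2$, and everything hinges on how these two $L^2$-quantities behave under Möbius maps and in the two limits $z\to\infty$ and $z\to\gamma$.

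For Möbius invariance I would establish two scaling laws. Let $\Phi$ be Möbius with pole off $\gamma$, set $w=\Phi(z)$, and recall $d\tilde\sigma=|\Phi'|\,d\sigma$ on $\tilde\gamma:=\Phi(\gamma)$. A direct computation with $\Phi(\zeta)-\Phi(z)$ and $\Phi'(\zeta)$ shows the Cauchy differential transforms by an explicit factor that is unimodular on $\gamma$, and after accounting for $d\tilde\sigma$ one gets $\norm{c_w}_{L^2(\tilde\gamma)}=|\Phi'(z)|^{-1/2}\norm{c_z}_{L^2(\gamma)}$. For the Szegő side, the map $f\mapsto\sqrt{\Phi'}\,(f\circ\Phi)$ is a unitary between the relevant Hardy spaces---this is exactly where the $|\Phi'|$ Jacobian and the duality framework of Proposition \ref{P:interior-exterior-dual-identification} enter---so the reproducing kernels satisfy $S(w,w)=|\Phi'(z)|^{-1}S(z,z)$. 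Dividing, the factors $|\Phi'(z)|$ cancel and $\Lambda_\pm(\gamma,z)=\Lambda_\mp(\tilde\gamma,w)$, with the labels possibly swapping because a pole inside $\Omega_+$ interchanges interior and exterior---precisely why the unified object \eqref{E:Lambda-def} is the right one. This proves invariance off $\gamma$ and off the poles; the convention $\Lambda\equiv1$ on $\gamma$ covers the curve, and the remaining points ($z$ or $w$ equal to $\infty$) follow by continuity once \ref{I:Lambda-cont-at-infty} is known.

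For continuity at $\infty$, expanding $\tfrac{1}{\zeta-z}$ gives $\norm{C_-(z,\cdot)}_{L^2(\gamma)}^2=\tfrac{\sigma(\gamma)}{4\pi^2|z|^2}(1+o(1))$. For the denominator, the representer of the bounded functional $L(f)=\lim_{z\to\infty}zf(z)$ on $\ch^2_-(\gamma)$ is $k=\lim_{z\to\infty}\bar z\,S_-(\cdot,z)$, so $|z|^2S_-(z,z)=\norm{\bar z\,S_-(\cdot,z)}_{L^2(\gamma)}^2\to\norm{k}^2=\norm{L}^2$; identifying $\norm{L}^2=\tfrac{\kappa(\gamma)}{2\pi}$ is exactly the extremal characterization of analytic capacity from Section \ref{SS:Analytic-Capacity}. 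Taking the ratio yields $\Lambda_-(\gamma,z)\to\sqrt{\sigma(\gamma)/(2\pi\kappa(\gamma))}$, the value assigned in \eqref{E:Lambda-def}. The point needing care here is the passage from weak convergence of the representers to convergence of their norms, i.e. continuity of the extremal value $\norm{L_z}$ as $z\to\infty$; a normal-families argument for the competing holomorphic functions supplies it.

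For continuity at the boundary I would first record the identity $S_+(\cdot,z)=\bm S_+\big[\,\overline{C_+(z,\cdot)}\,\big]$, which follows from comparing the two reproducing formulas $f(z)=\langle f,\overline{C_+(z,\cdot)}\rangle=\langle f,S_+(\cdot,z)\rangle$ on $\ch^2_+(\gamma)$; it already shows $\Lambda\ge 1$ and recasts the claim as the statement that the projection defect $\norm{(I-\bm S_+)\overline{C_+(z,\cdot)}}\big/\norm{C_+(z,\cdot)}$ tends to $0$ as $z\to\zeta_0$. I would prove this by localization: for $C^1$ $\gamma$ the curve is asymptotically flat near $\zeta_0$, the normalized kernel $c_z/\norm{c_z}$ concentrates at $\zeta_0$, and comparison with the half-plane forces the defect to vanish; equivalently both $\norm{C_+(z,\cdot)}^2$ and $S_+(z,z)$ share the universal leading term $\big(4\pi\,\mathrm{dist}(z,\gamma)\big)^{-1}$, so their ratio tends to $1$, and the exterior limit $z\to\zeta_0$ through $\Omega_-$ is identical. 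I expect this last step---the uniform boundary asymptotics of the Szegő kernel for a merely $C^1$ (not smooth) curve---to be the main obstacle, since one cannot appeal to the clean smooth-domain expansions and must instead control the localization error from the modulus of continuity of the tangent direction.
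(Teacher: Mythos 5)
Your part \eqref{III:Möbius-invariance} is essentially the paper's own proof: the Cauchy-kernel transformation law (Theorem \ref{T:Cauchy-Mobius-Inv}) gives $\norm{C(z,\cdot)}^2_{L^2(\gamma)}=|\Phi'(z)|\,\norm{C(\Phi(z),\cdot)}^2_{L^2(\Phi(\gamma))}$, the Szeg\H{o} transformation law (Proposition \ref{P:SzegoTransLaw}) produces the matching factor $|\Phi'(z)|$ in the denominator, the ratio is invariant, labels swap when the pole lies in $\Omega_+$, and the exceptional points $z=\infty$, $\Phi(z)=\infty$ are handled by continuity via part \eqref{I:Lambda-cont-at-infty}. (Minor misattribution: the Szeg\H{o} law comes from unitarity of $f\mapsto\sqrt{\Phi'}\,(f\circ\Phi)$, not from the bilinear-pairing duality of Proposition \ref{P:interior-exterior-dual-identification}.)

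Part \eqref{I:Lambda-cont-at-infty} has a genuine gap. The identification $\norm{L}^2=\kappa(\gamma)/(2\pi)$ is \emph{not} ``exactly the extremal characterization of analytic capacity'': the capacity \eqref{E:def-of-analytic-capacity} is the supremum of $|D(g,\infty)|$ over the $H^\infty$ unit ball of $\Omega_-$, while $\norm{L}$ is the supremum of the same functional over the $L^2(\gamma)$ unit ball of $\ch_-^2(\gamma)$. These are different extremal problems, and equating their values is a Garabedian-type duality whose proof (for simply connected $\Omega_-$) is precisely the content of the paper's Theorem \ref{T:Lambda-at-infinity}: one shows the Riemann map solves the $H^\infty$ problem, so $\kappa(\gamma)=G'(0)$, and separately computes $\lim_{z\to\infty}|z|^2S_-(z,z)=G'(0)/(2\pi)$ from the transformation law. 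Citing the definition at the point where this argument is required makes the step circular; the functional-representer framing is fine, but the key identity still has to be proved.

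Part \eqref{II:Lambda-cont-at-curve} also has a gap, and it is the main one. Your reduction to the projection defect via $S_+(\cdot,z)=\bm{S}_+\big(\ol{C_+(z,\cdot)}\big)$ is correct --- this is \eqref{E:Szego-adjoint-Cauchy}, and after normalizing it is equivalent to the paper's Berezin identity $\sB(\gamma,\bm{A}_+^2,\bm{A}_-^2)=1-\Lambda^2$ (Proposition \ref{P:concat-Berezin-comps}, which rests on \eqref{E:Cauchy-adjoint-Szego}). But the step you propose to finish with, namely that $S_+(z,z)$ has the \emph{exact} leading term $\big(4\pi\,\mathrm{dist}(z,\gamma)\big)^{-1}$ for a merely $C^1$ curve, does not follow from a half-plane comparison. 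For $C^1$ (as opposed to Dini-smooth) boundaries the Riemann map derivative need not extend continuously to the boundary, so Koebe-type distortion estimates only yield the two-sided bound $S_\pm(z,z)\approx\delta(z)^{-1}$ with uncontrolled constants (this is Proposition \ref{P:Szego-blow-up}); upgrading it to the precise asymptotic constant is essentially equivalent to the theorem itself, given the (robust) Cauchy-norm asymptotics. The paper sidesteps this entirely: the crude bound suffices to show the normalized Szeg\H{o} kernels $s_z^\pm$ tend weakly to zero (Lemma \ref{L:normalized-Szego-tends-weakly-to-0}), and the hard analytic input is imported as compactness of the Kerzman--Stein operator on $C^1$ curves (Proposition \ref{P:Kerzman-Stein-compact}, Lanzani's theorem), which forces the Berezin transform of $\bm{A}_\pm^2$ to vanish at the boundary (Lemma \ref{L:Berezin-weak-conv-to 0}). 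As written, you acknowledge the obstacle but do not overcome it, so this part of your proposal is unproved.
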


Part \eqref{I:Lambda-cont-at-infty} is proved in Theorem \ref{T:Lambda-at-infinity} after a short discussion of analytic capacity.
Part \eqref{II:Lambda-cont-at-curve} is proved in Theorem \ref{T:Lambda-continuinty-at-boundary}, with the Berezin transform and compactness of the Kerzman-Stein operator playing important roles.
These first two parts together show that $z \mapsto \Lambda(\gamma,z)$ is continuous on the Riemann sphere whenever $\gamma$ is a $C^1$ smooth curve.
Part \eqref{III:Möbius-invariance} is shown in Theorem \ref{T:Lambda-Proj-Inv} after obtaining a Möbius transformation rule for the Cauchy kernel.

One consequence of Möbius invariance is that it gives a simple way to extend $\Lambda$ to {\em unbounded} curves:
let $\gamma$ be a simple closed Lipschitz curve in the Riemann sphere passing through $\infty$ (see Section 2.1), and $\Phi$ be a Möbius transformation with its pole lying off of $\gamma$.
Then the image curve, denoted $\Phi(\gamma)$, is a simple closed Lipschitz curve in the plane, and we {\em define}
\begin{equation*}
\Lambda(\gamma,z) := \Lambda(\Phi(\gamma),\Phi(z)).
\end{equation*}

The fact this extension is well-defined is immediate from Theorem \ref{T:intro-3-props-of-Lambda}, part \eqref{III:Möbius-invariance}.

The next result shows that circles form the class of minimizing curves for $\Lambda$:
\begin{theorem}\label{T:rigidity-intro}
Let $\gamma$ be a simple closed Lipschitz curve in the Riemann sphere.
\begin{enumerate}
\item $\Lambda(\gamma,z) \ge 1$, for all $z \in \Omega_\pm$.
\item If there is a single $z \in \Omega_\pm$ such that $\Lambda(\gamma,z) = 1$, then $\Lambda(\gamma,\cdot) \equiv 1$ and $\gamma$ is a circle (or a line, including the point at $\infty$).
\end{enumerate}
\end{theorem}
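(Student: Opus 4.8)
The plan is to recognize $S_\pm(\cdot,z)$ as the orthogonal (Szeg\H{o}) projection of the conjugated Cauchy kernel $\ol{C_\pm(z,\cdot)}$ onto $\ch^2_\pm(\gamma)$. Granting this, part (1) becomes a one-line consequence of the fact that orthogonal projection cannot increase norm, and the equality case is pinned down exactly. First I would verify the identity $\bm S_\pm\,\ol{C_\pm(z,\cdot)} = S_\pm(\cdot,z)$ for each $z\in\Omega_\pm$ by testing against an arbitrary $f\in\ch^2_\pm(\gamma)$: using $\bm S_\pm^*=\bm S_\pm$, the reproducing property $\bm S_\pm f=f$, the definition of $\langle\cdot,\cdot\rangle$, and finally $\bm C_\pm f = f$, one finds $\langle f,\bm S_\pm\ol{C_\pm(z,\cdot)}\rangle = \bm C_\pm f(z)=f(z)=\langle f,S_\pm(\cdot,z)\rangle$, so the two $\ch^2_\pm$-elements agree. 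Pythagoras then gives
\[
\norm{C_\pm(z,\cdot)}^2=\norm{\ol{C_\pm(z,\cdot)}}^2=S_\pm(z,z)+\norm{(I-\bm S_\pm)\ol{C_\pm(z,\cdot)}}^2\ge S_\pm(z,z),
\]
i.e.\ $\Lambda(\gamma,z)\ge 1$, with equality precisely when $\ol{C_\pm(z,\cdot)}=S_\pm(\cdot,z)$, equivalently $C_\pm(z,\cdot)=S_\pm(z,\cdot)$ on $\gamma$.

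For the rigidity in (2), suppose $\Lambda(\gamma,z_0)=1$ with $z_0\in\Omega_+$ (the exterior case follows after a Möbius move carrying $z_0$ to a finite point, invoking Theorem \ref{T:intro-3-props-of-Lambda}\eqref{III:Möbius-invariance}). Writing $h=S_+(\cdot,z_0)\in\ch^2_+(\gamma)$, the equality reads $\ol{h(\zeta)}=T(\zeta)/\big(2\pi i(\zeta-z_0)\big)$ on $\gamma$. Taking moduli and using $|T|=1$ gives $|h(\zeta)|=1/\big(2\pi|\zeta-z_0|\big)$, so the holomorphic function $F(\zeta):=2\pi(\zeta-z_0)h(\zeta)$ has unimodular boundary values; thus $F$ is an inner function on $\Omega_+$ with $F(z_0)=0$. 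A boundary winding count finishes this step: the turning-tangent theorem gives $T$ total increment $2\pi$ and $\zeta-z_0$ contributes $2\pi$, so $\arg h$ has zero increment around $\gamma$; by the argument principle $h$ is zero-free, whence $F$ has exactly one zero and is a conformal map $F\colon\Omega_+\to\D$.

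To extract a circle I would feed $F$ back into the conformal transformation law for the Szeg\H{o} kernel. Since $F$ is biholomorphic with $F(z_0)=0$ and $S_\D(u,v)=1/\big(2\pi(1-u\bar v)\big)$, the transformation rule yields $h(\zeta)=S_+(\zeta,z_0)=\sqrt{F'(\zeta)}\,\ol{\sqrt{F'(z_0)}}\,/(2\pi)$. Combined with $F=2\pi(\zeta-z_0)h$ this produces the first-order relation $F^2=c\,(\zeta-z_0)^2F'$ for a constant $c$; separating variables and integrating forces $F(\zeta)=\dfrac{c'(\zeta-z_0)}{1-c''(\zeta-z_0)}$, a Möbius transformation. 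Hence $\Omega_+=F^{-1}(\D)$ is a disk (or half-plane) and $\gamma$ is a circle (or line). Once $\gamma$ is known to be a circle, the identity $C=S$ holds at every point — most transparently by Möbius-reducing to the unit circle, where the Cauchy and Szeg\H{o} kernels coincide — so $\Lambda(\gamma,\cdot)\equiv 1$, completing (2).

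I expect the main obstacle to be making this rigidity argument rigorous in the merely Lipschitz category. The winding bookkeeping (Hopf's Umlaufsatz for an a.e.-defined tangent), the assertion that $F$ is genuinely inner with controlled boundary behavior, and especially the kernel transformation law involving $\sqrt{F'}$ are all clean for $C^1$ (or smooth) $\gamma$ but delicate otherwise. I would either run the integration purely at the level of the inner function $F$ within Smirnov-class theory, thereby avoiding $\sqrt{F'}$, or prove the smooth case first and transfer to the Lipschitz setting by approximation together with the Möbius invariance and boundary/capacity continuity already recorded in Theorem \ref{T:intro-3-props-of-Lambda}. By contrast, the inequality (1) is entirely robust, requiring only the projection identity and needing no boundary regularity.
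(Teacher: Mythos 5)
Your part (1) is correct and follows a genuinely different route from the paper's: the projection identity $\bm{S}_\pm\big(\ol{C_\pm(z,\cdot)}\big)=S_\pm(\cdot,z)$ you verify is the paper's own equation \eqref{E:Szego-adjoint-Cauchy}, and your Pythagoras step is equivalent to the identity $\norm{C(z,\cdot)-S(z,\cdot)}^2_{L^2(\gamma)}=\norm{C(z,\cdot)}^2_{L^2(\gamma)}-S(z,z)$ recorded later in the paper as \eqref{E:L2-distance-Szego-Cauchy}. The paper instead proves (1) and (2) simultaneously (Theorem \ref{T:Lambda>=1}) by transplanting $\zeta\mapsto 1/(\zeta-z)$ to the disc via the Riemann map and applying the residue theorem; your argument for (1) needs no Riemann map and no regularity beyond Lipschitz, and your identification of the equality case ($C_+(z_0,\cdot)=S_+(z_0,\cdot)$ a.e.\ on $\gamma$) is exactly right.

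The genuine gap is in part (2), at the step ``$F$ has exactly one zero and is a conformal map $F\colon\Omega_+\to\D$.'' As an abstract implication this is false: an inner function with a single zero need not be injective --- $w\mapsto w\exp\big(-\tfrac{1+w}{1-w}\big)$ is inner on $\D$ and vanishes only at the origin --- so a possible singular inner factor must be excluded by an actual argument, and that is precisely the job of your winding count. But the winding count requires $T$ and $S_+(\cdot,z_0)$ to have continuous boundary values, i.e.\ essentially smooth $\gamma$: for Lipschitz $\gamma$ the tangent is defined only a.e.\ (and $\arg T$ need not have bounded variation, so Hopf's Umlaufsatz is not even meaningful), the Szeg\H{o} kernel need not extend continuously to $\gamma$, and boundary winding/degree arguments are unavailable. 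Neither of your proposed repairs closes this. Smirnov-class theory does give that $F$ is inner (Lipschitz domains are Smirnov domains, so this part is fine), but inner-ness is what you already have and says nothing about singular factors. And smooth approximation cannot work even in principle: the hypothesis $\Lambda(\gamma,z_0)=1$ is an exact equality, approximating smooth curves satisfy only $\Lambda\approx1$, and a rigidity conclusion does not pass through such a limit without a quantitative stability theorem --- a strictly harder statement than the one being proved.

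The step can be repaired inside your framework by replacing degree theory with the Schwarz lemma. Let $g\colon\D\to\Omega_+$ be the Riemann map with $g(0)=z_0$, $g'(0)>0$. Proposition \ref{P:SzegoTransLaw} (valid for Lipschitz boundaries) gives $S_+(g(w),z_0)=\big(2\pi\sqrt{g'(w)}\sqrt{g'(0)}\big)^{-1}$, hence $G:=F\circ g=(g-g(0))\big/\big(\sqrt{g'}\sqrt{g'(0)}\big)$, which satisfies $G(0)=0$ and $G'(0)=1$ by direct computation. Since $g'\in H^1$ (Riesz--Privalov, already invoked in the paper), $|g'|>0$ a.e.\ on $b\D$, so your a.e.\ unimodularity of $F$ on $\gamma$ transfers to $|G|=1$ a.e.\ on $b\D$; since $g'$ is outer (Smirnov property), $G\in N^+$, and the Smirnov maximum principle upgrades this to $|G|\le1$ on $\D$. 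The Schwarz lemma then forces $G(w)\equiv w$, i.e.\ $\big(g(w)-g(0)\big)^2=w^2\,g'(w)\,g'(0)$ --- your separable ODE with the constant pinned --- and integrating shows $g$ is M\"obius, so $\gamma$ is a circle; the reduction of the exterior case by Theorem \ref{T:Lambda-Proj-Inv} and the final appeal to $\Lambda\equiv1$ on circles go through as you wrote them. Even so repaired, your route remains genuinely different from the paper's equality analysis, which shows $\phi(\zeta)=\zeta^2h'(\zeta)$ is real a.e.\ on $b\D$ and concludes by Schwarz reflection and Liouville rather than by any injectivity argument.
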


This theorem and its consequences are presented in Sections \ref{SS:circles-as-extremal-curves} and \ref{SS:rigidity}.

In \cite{KerSte78a} Kerzman and Stein gave a clever geometric interpretation of their operator $\bm{A}$ and deduced that if the Cauchy and Szeg\H{o} kernels of a bounded domain coincide, the underlying domain must be a disc.
Theorem \ref{T:rigidity-intro} implies a significantly strengthened version of this result.
The proof of the following result (see Corollary \ref{C:vanishing-of-the-Kerzman-Stein-kernel}) uses the $\Lambda$-function and makes no reference to the geometry of the Kerzman-Stein operator:

\begin{corollary}\label{C:vanishing-of-the-KS-intro}
Let $\Omega$ be a bounded simply connected planar domain with Lipschitz boundary.
If there exists a single $z \in \Omega$ such that the Cauchy and Szeg\H{o} kernels satisfy 
\begin{equation*}
|C(z,\zeta)| \le |S(z,\zeta)|
\end{equation*} 
for almost every $\zeta \in \gamma$, then $\Omega$ is a disc.
\end{corollary}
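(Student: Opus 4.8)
The plan is to reduce the statement to the equality case of the rigidity theorem, Theorem~\ref{T:rigidity-intro}. Since $\Omega$ is bounded and simply connected, orient $\gamma = \partial\Omega$ counterclockwise so that $\Omega = \Omega_+$; the unsubscripted kernels $C,S$ in the hypothesis are then the interior kernels $C_+(z,\cdot)$ and $S_+(z,\cdot)$. It therefore suffices to prove that $\Lambda_+(\gamma,z) \le 1$ at the distinguished point $z \in \Omega_+$: combined with the lower bound $\Lambda_+(\gamma,z) \ge 1$ of Theorem~\ref{T:rigidity-intro}(1), this forces $\Lambda_+(\gamma,z) = 1$, whereupon Theorem~\ref{T:rigidity-intro}(2) yields that $\gamma$ is a circle and $\Omega$ a disc.

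The first step is to identify the denominator $\sqrt{S_+(z,z)}$ appearing in the definition of $\Lambda_+$ with an $L^2$ norm. Since $S_+(\cdot,z) \in \ch_+^2(\gamma)$, applying the reproducing property to $f = S_+(\cdot,z)$ and using the conjugate symmetry $\ol{S_+(z,\zeta)} = S_+(\zeta,z)$ gives
\[
S_+(z,z) = \int_\gamma S_+(z,\zeta)\,S_+(\zeta,z)\,d\sigma(\zeta) = \int_\gamma |S_+(z,\zeta)|^2\,d\sigma(\zeta) = \norm{S_+(z,\cdot)}_{L^2(\gamma)}^2.
\]
Hence $\Lambda_+(\gamma,z) = \norm{C_+(z,\cdot)}_{L^2(\gamma)}\big/\norm{S_+(z,\cdot)}_{L^2(\gamma)}$, which is exactly the ratio $(*)$ from the introduction evaluated at $z$.

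With this reformulation in hand, the hypothesis finishes the argument at once: squaring the a.e.\ pointwise bound $|C_+(z,\zeta)| \le |S_+(z,\zeta)|$ and integrating over $\gamma$ gives $\norm{C_+(z,\cdot)}_{L^2(\gamma)} \le \norm{S_+(z,\cdot)}_{L^2(\gamma)}$, i.e.\ $\Lambda_+(\gamma,z) \le 1$. Note that only the integrated inequality is used, so the pointwise comparison is more than the argument requires. All the real content thus sits inside Theorem~\ref{T:rigidity-intro}; the main obstacle is not this short deduction but the rigidity theorem itself --- especially its equality clause, which is what promotes the single-point identity $\Lambda_+(\gamma,z)=1$ to the global conclusion that $\gamma$ is a circle.
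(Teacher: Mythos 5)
Your proof is correct and takes essentially the same route as the paper: both arguments identify $\sqrt{S_+(z,z)}$ with $\norm{S_+(z,\cdot)}_{L^2(\gamma)}$ via the reproducing property and conjugate symmetry, integrate the pointwise hypothesis to force $\Lambda(\gamma,z)\le 1$, and then invoke the rigidity theorem (Theorem \ref{T:Lambda>=1}) to conclude that $\gamma$ is a circle. The only cosmetic difference is that the paper packages the comparison as the exact identity $\norm{C(z,\cdot)-S(z,\cdot)}^2_{L^2(\gamma)} = S(z,z)\left(\Lambda(b\Omega,z)^2-1\right)$, evaluating the cross term $2\re\int_\gamma C(z,\zeta)\ol{S(z,\zeta)}\,d\sigma(\zeta)$ with the Cauchy integral formula, which re-derives the lower bound $\Lambda\ge 1$ for free, whereas you instead quote part (1) of the rigidity theorem for that bound.
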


\subsubsection{Estimating Cauchy norms}
The maximum value attained by $\Lambda(\gamma,\cdot)$ on the Riemann sphere bounds the norm of the Cauchy transform from below:

\begin{theorem}\label{T:Cauchy-norm-lower-bound}
Let $\gamma$ be a simple closed Lipschitz curve in the Riemann sphere.
The norms of the interior and exterior Cauchy transforms are equal and further, satisfy the estimate
\begin{equation}\label{E:Cauchy-norm-lower-bound}
\sup\nolimits_{z \in \hat{\C}} \Lambda(\gamma,z) \le \norm{\bm{C}_\pm}.
\end{equation}
\end{theorem}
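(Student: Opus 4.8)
The plan is to prove the two assertions separately: first the equality $\norm{\bm{C}_+} = \norm{\bm{C}_-}$, and then the pointwise bound $\Lambda(\gamma,z) \le \norm{\bm{C}_\pm}$ for every $z \in \hat\C$, from which \eqref{E:Cauchy-norm-lower-bound} follows by taking the supremum.

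For the equality of norms I would exploit the algebraic relationship between the two transforms. Because $C_-(z,\zeta) = -C_+(z,\zeta)$, the two principal-value integrals in \eqref{E:Cauchy-transform-boundary} differ only by a sign while both carry the same jump term $h(z)/2$; hence $\bm{C}_+ + \bm{C}_- = I$ on $L^2(\gamma)$, i.e. $\bm{C}_- = I - \bm{C}_+$. Each $\bm{C}_\pm$ is bounded by the Coifman--McIntosh--Meyer theorem, and the reproducing property $\bm{C}_\pm f = f$ for $f \in \ch^2_\pm(\gamma)$, together with the fact that the range of $\bm{C}_\pm$ is $\ch^2_\pm(\gamma)$, shows that $\bm{C}_\pm$ is idempotent. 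Since $\ch^2_\pm(\gamma)$ is a nontrivial proper subspace of $L^2(\gamma)$, neither idempotent equals $0$ or $I$. I would then invoke the classical fact that a nontrivial idempotent $E$ (with $0 \ne E \ne I$) and its complement $I - E$ on a Hilbert space have equal operator norm: indeed both equal $(\sin\theta)^{-1}$, where $\theta$ is the minimal angle between the range and kernel of $E$, a quantity symmetric in the two subspaces. Applied to $E = \bm{C}_+$ this gives $\norm{\bm{C}_+} = \norm{I - \bm{C}_+} = \norm{\bm{C}_-}$.

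For the lower bound, fix $z \in \Omega_\pm \setminus \{\infty\}$ and consider the evaluation functional $L_z(h) = \bm{C}_\pm h(z)$ on $L^2(\gamma)$. On one hand, rewriting the Cauchy integral as an inner product gives $L_z(h) = \langle h, \overline{C_\pm(z,\cdot)}\rangle$, so by Riesz representation $\norm{L_z} = \norm{C_\pm(z,\cdot)}_{L^2(\gamma)}$. On the other hand, since $\bm{C}_\pm h \in \ch^2_\pm(\gamma)$, the reproducing property of the Szeg\H{o} kernel gives $\bm{C}_\pm h(z) = \langle \bm{C}_\pm h, S_\pm(\cdot,z)\rangle$, whence by Cauchy--Schwarz and boundedness of $\bm{C}_\pm$,
\[ |L_z(h)| \le \norm{\bm{C}_\pm h}\,\sqrt{S_\pm(z,z)} \le \norm{\bm{C}_\pm}\,\sqrt{S_\pm(z,z)}\,\norm{h}. \]
Comparing the two expressions for $\norm{L_z}$ yields $\norm{C_\pm(z,\cdot)}_{L^2(\gamma)} \le \norm{\bm{C}_\pm}\sqrt{S_\pm(z,z)}$, that is, $\Lambda_\pm(\gamma,z) \le \norm{\bm{C}_\pm}$.

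It then remains to treat the boundary and the point at infinity and to assemble the supremum. On $\gamma$ we have $\Lambda = 1$, and since any nonzero idempotent satisfies $\norm{\bm{C}_\pm} \ge 1$, the bound holds there. At $z = \infty$, continuity of $\Lambda$ (Theorem \ref{T:intro-3-props-of-Lambda}, part \eqref{I:Lambda-cont-at-infty}) lets me pass to the limit in the inequality $\Lambda_-(\gamma,z) \le \norm{\bm{C}_-}$ to obtain $\Lambda(\gamma,\infty) \le \norm{\bm{C}_-}$. Thus $\Lambda(\gamma,z) \le \norm{\bm{C}_\pm}$ for every $z \in \hat\C$, and taking the supremum gives the claim. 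I expect the norm-equality step to be the main obstacle: one must confirm that the boundary identities genuinely yield the complementation $\bm{C}_- = I - \bm{C}_+$ on all of $L^2(\gamma)$ and verify the hypotheses of the idempotent norm lemma, whereas the lower-bound half is a short Cauchy--Schwarz duality argument once the two reproducing properties are in hand.
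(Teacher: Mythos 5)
Your proposal is correct, but it reaches both conclusions by a genuinely different route than the paper. For the norm equality, the paper's proof (Theorem \ref{T:norm-pairing-efficieny}) never touches the jump relation: it runs through the duality theory of the bilinear pairing --- Proposition \ref{P:interior-exterior-dual-identification} gives $\norm{\bm{C}_\pm}^{-1}\norm{g} \le \norm{\psi_g}_{\sf op} \le \norm{g}$, and a chain of four inf-sup inequalities is forced into equality, producing the exact variational formula \eqref{E:Cauchy-norm-eq} for $1/\norm{\bm{C}_\pm}$, of which $\norm{\bm{C}_+}=\norm{\bm{C}_-}$ is a by-product. You instead combine the Plemelj complementation $\bm{C}_+ + \bm{C}_- = I$ (which does follow from \eqref{E:Cauchy-transform-boundary} and $C_- = -C_+$ under the paper's conventions) with the Hilbert-space identity $\norm{E} = \norm{I-E}$ for idempotents $E \ne 0, I$; that identity is classical but nontrivial and special to Hilbert space, so it deserves a citation (Del Pasqua, Ljance; see also Szyld's survey of its many proofs), while your verification of its hypotheses is sound, since $\bm{C}_\pm$ fixes $\ch^2_\pm(\gamma)$ and annihilates $\ch^2_\mp(\gamma)$, both nontrivial. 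For the pointwise bound, the paper (Theorem \ref{T:Lambda-function-lower-bound}) feeds the test function $h_z(\zeta)=(2\pi i(\zeta-z))^{-1} \in \ch^2_\mp(\gamma)$ into the inf-sup formula and invokes the Szeg\H{o} extremal property \eqref{E:Szego-diagonal-extremal-char}, whereas you compute the norm of the evaluation functional $h \mapsto \bm{C}_\pm h(z)$ twice --- once exactly, by Riesz representation, as $\norm{C_\pm(z,\cdot)}_{L^2(\gamma)}$, and once as an upper bound $\norm{\bm{C}_\pm}\sqrt{S_\pm(z,z)}$ via the Szeg\H{o} reproducing property and Cauchy--Schwarz. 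The two arguments are dual to one another, but yours is self-contained and bypasses the duality apparatus of Section 2 entirely. The trade-off: the paper's machinery yields an exact characterization of $\norm{\bm{C}_\pm}$ rather than only a bound, and it is deliberately built to transfer to the higher-dimensional Leray setting motivating the paper, where the interior and exterior Hardy spaces do not sit inside a common $L^2$ and the complementation your argument leans on is unavailable; your route buys brevity and elementarity. The endgame --- $\Lambda \equiv 1$ on $\gamma$, $\norm{\bm{C}_\pm}\ge 1$ because it is a nonzero projection, and continuity of $\Lambda$ at $\infty$ --- coincides with the paper's.
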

\begin{proof}
That $\norm{\bm{C}_+} = \norm{\bm{C}_-}$ is shown in Theorem \ref{T:norm-pairing-efficieny}.
In Theorem \ref{T:Lambda-function-lower-bound} it is shown that $\Lambda(\gamma,z) \le \norm{\bm{C}_\pm}$ for every $z \in \C$.
The continuity of $\Lambda(\gamma,\cdot)$ at $\infty$ finishes the proof.
\end{proof}

Let $W_\theta = \{r e^{i\varphi} : r>0, \,\, |\varphi|<\theta \}$ be the unbounded wedge with aperture $2\theta \in (0,2\pi)$ and boundary denoted by $bW_\theta$.
In Section \ref{SS:the-wedge} we study this wedge and produce an explicit formula for $\Lambda(bW_\theta,z)$ in Theorem \ref{T:Lambda-wedge-formula}.
Several conclusions are drawn from this formula;
in particular, it is shown that $\Lambda(bW_\theta,z)$ is discontinuous at the origin (a corner point), breaking from the continuous behavior on $C^1$ curves guaranteed by Theorem \ref{T:intro-3-props-of-Lambda}.

In Section \ref{SS:the-ellipse}, a second family of curves is considered.
Let $\mathcal{E}_r = \{ (x,y): \frac{x^2}{r^2}+y^2 = 1 \}$, an ellipse with major-to-minor axis ratio $r>1$.
We compute $\Lambda(\ce_r,z)$ and use it to produce the best known lower estimate on the norm of the Cauchy transform.
\begin{theorem}\label{T:Cauchy-ellipse-lower-bound-intro}
Let $r>1$. The $L^2$-norm of the Cauchy transform on $\ce_r$ satisfies
\begin{equation}\label{E:Cauchy-ellipse-lower-bound}
\norm{\bm{C}}_{L^2(\ce_r)} \ge \sqrt{\frac{2}{\pi} \sqrt{1-\tfrac{1}{r^2}} \cdot \frac{ (r^2+1) \cdot \Pi\Big(1-r^2, \sqrt{1-\tfrac{1}{r^2}}\Big) - K\Big(\sqrt{1-\tfrac{1}{r^2}}\Big) }{\vartheta_2\Big(0,\big(\frac{r-1}{r+1}\big)^2 \Big) \, \vartheta_3\Big(0,\big(\frac{r-1}{r+1}\big)^2 \Big)}}.
\end{equation}
\end{theorem}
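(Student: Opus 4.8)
The plan is to invoke the general lower bound of Theorem \ref{T:Cauchy-norm-lower-bound}, which gives $\norm{\bm{C}} = \norm{\bm{C}_\pm} \ge \sup_{z\in\hat{\C}}\Lambda(\ce_r, z) \ge \Lambda(\ce_r, 0)$, and then to evaluate $\Lambda$ at the single point $z = 0$, the center of the ellipse (which lies in $\Omega_+$). Thus the whole theorem reduces to the identity that $\Lambda(\ce_r, 0)^2$ equals the quantity under the radical in \eqref{E:Cauchy-ellipse-lower-bound}. Since $\Lambda(\ce_r, 0)^2 = \norm{C_+(0,\cdot)}_{L^2(\ce_r)}^2 \big/ S_+(0,0)$, the computation splits into two independent pieces: the $L^2$-norm of the Cauchy kernel at the center, and the value of the Szeg\H{o} kernel on the diagonal at the center.

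For the numerator, the relation $|T(\zeta)| = 1$ gives $|C_+(0,\zeta)|^2 = (4\pi^2|\zeta|^2)^{-1}$, so $\norm{C_+(0,\cdot)}_{L^2}^2 = (4\pi^2)^{-1}\int_{\ce_r}|\zeta|^{-2}\,d\sigma$. Parametrizing by $\zeta = r\cos t + i\sin t$ and writing $e = \sqrt{1 - 1/r^2}$ for the eccentricity, one has $|\zeta|^2 = r^2\cos^2 t + \sin^2 t = r^2(1 - e^2\sin^2 t)$ and $d\sigma = r\sqrt{1 - e^2\cos^2 t}\,dt$. Matching the resulting real integral against the standard forms of $K$ and $\Pi$ — using $1 + (r^2-1)\sin^2 t = r^2\sin^2 t + \cos^2 t$ to recognize the third-kind integrand with characteristic $1-r^2$ and modulus $e$ — one obtains after a short manipulation
\[
\norm{C_+(0,\cdot)}_{L^2}^2 = \frac{(r^2+1)\,\Pi(1-r^2, e) - K(e)}{\pi^2\, r}.
\]

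For the denominator I will use the conformal transformation law for the Szeg\H{o} kernel. If $f\colon\D\to\Omega_+$ is the Riemann map onto the interior of $\ce_r$ with $f(0) = 0$, then $g\mapsto (g\circ f)\sqrt{f'}$ is unitary from $\ch_+^2(\ce_r)$ onto $\ch_+^2(b\D)$, the factor $\sqrt{f'}$ absorbing the change of arc-length measure, whence $S_+(0,0) = S_{b\D}(0,0)\big/|f'(0)| = 1\big/\bigl(2\pi|f'(0)|\bigr)$. The remaining input is the classical evaluation of the derivative at the center of the conformal map onto an ellipse in terms of Jacobi theta constants: with nome $q = \bigl(\tfrac{r-1}{r+1}\bigr)^2$ one has $|f'(0)| = \sqrt{r^2-1}\big/\bigl(\vartheta_2(0,q)\,\vartheta_3(0,q)\bigr)$, so that $S_+(0,0) = \vartheta_2(0,q)\,\vartheta_3(0,q)\big/\bigl(2\pi\sqrt{r^2-1}\bigr)$.

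Combining the two pieces and simplifying with $\sqrt{r^2-1}/r = e$ gives
\[
\Lambda(\ce_r, 0)^2 = \frac{2}{\pi}\,e\cdot\frac{(r^2+1)\,\Pi(1-r^2, e) - K(e)}{\vartheta_2(0,q)\,\vartheta_3(0,q)},
\]
which is precisely the quantity under the square root in \eqref{E:Cauchy-ellipse-lower-bound}; taking square roots and recalling $\Lambda(\ce_r,0)\le\norm{\bm{C}}$ completes the proof. The main obstacle is the Szeg\H{o}-kernel step: reducing $S_+(0,0)$ to $|f'(0)|$ is routine, but the theta-constant value of $|f'(0)|$ rests on the classical elliptic-function description of the map onto an ellipse, and care is needed to match theta-function conventions to the nome $q = ((r-1)/(r+1))^2$ (a sanity check is that $|f'(0)| \to 1$ as $r\to 1^+$, as the conformal radius of the unit disc should). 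The elliptic-integral reduction of the Cauchy norm is elementary by comparison, though recognizing the exact combination $(r^2+1)\Pi(1-r^2,e) - K(e)$ requires the trigonometric identities noted above.
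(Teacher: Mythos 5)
Your proposal is correct and follows essentially the same route as the paper's: both reduce the theorem to the bound $\norm{\bm{C}}_{L^2(\ce_r)} \ge \Lambda(\ce_r,0)$ from Theorem \ref{T:Cauchy-norm-lower-bound}, then compute $\norm{C_+(0,\cdot)}^2_{L^2(\ce_r)} = \frac{1}{\pi^2 r}\big((r^2+1)\,\Pi\big(1-r^2,\sqrt{1-\tfrac{1}{r^2}}\big) - K\big(\sqrt{1-\tfrac{1}{r^2}}\big)\big)$ and $S_+(0,0) = \vartheta_2(0,q)\,\vartheta_3(0,q)\big/\big(2\pi\sqrt{r^2-1}\big)$ with $q = \big(\tfrac{r-1}{r+1}\big)^2$, and divide. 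The only differences are cosmetic: you parametrize $\ce_r$ trigonometrically where the paper uses the algebraic parametrization $\zeta^\pm(t) = rt \pm i\sqrt{1-t^2}$, and you cite the theta-constant value of the conformal radius $|f'(0)|$ as classical, whereas the paper derives it from the explicit $\mathrm{sn}$-form of the Riemann map together with Jacobi's identity $\vartheta_1'(0,q) = \vartheta_2(0,q)\,\vartheta_3(0,q)\,\vartheta_4(0,q)$.
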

This bound is shown to be asymptotically sharp as $r \to 1$. (See Section \ref{SSS:families-of-special-functions} for conventions regarding elliptic integrals and theta functions appearing in the formula.)

\subsection{Motivation from higher dimensions}

This paper grew out of an ongoing project on the Leray transform $\bm{L}$, a higher dimensional analogue of the Cauchy transform.
Given a $\C$-convex hypersurface $\cs \subset \C\mathbb{P}^n$, recent work of the authors (see \cite{Bar16,BarEdh17,BarEdh21}) uncovers an intriguing connection between analytic quantities tied to $\bm{L}$ (norms, essential norms, spectral data) and projective-geometric invariants associated to $\cs$ and its projective dual hypersurface $\cs^*$.
A natural construction yields a pair of projectively-invariant dual Hardy spaces on $\cs$ and $\cs^*$, and a generalized version of  $\Lambda(\gamma,\cdot)$ can be defined using Leray and Szeg\H{o} kernels.
The higher dimensional theory simplifies considerably in one dimension, serving to motivate the present paper.

The function $\Lambda$ can be related to Fredholm eigenvalue problems studied by Bergman-Schiffer \cite{BergmanSchiffer1951} and Singh \cite{Singh1960}. 
Burbea previously connected the Kerzman-Stein operator $\bm{A}$ to these same eigenvalue problems in \cite{Burbea1982}, then went on to reprove key properties of $\bm{A}$ (e.g. compactness) using the theory of Garabedean anti-symmetric $l$ kernels.
Similarly, some basic properties of $\Lambda$ in Section \ref{SSS:basic-properties} can be obtained using the same approach -- at least when $\gamma$ is smooth enough.
But here we have opted to avoid the Garabedean machinery entirely.
The reason for this is two-fold. 
Firstly, in minimally smooth settings ($\gamma$ being $C^1$ or less), analysis becomes significantly harder and the Garabedean approach is often untenable.
For example, while the compactness of $\bm{A}$ continues to hold when $\gamma$ is only assumed to be $C^1$, Burbea's argument breaks down and the proof requires much more delicacy; see \cite{Lan99}.
Secondly, the theory of the Garabedean kernel depends critically on a particular orthogonal decomposition of $L^2(\gamma)$ (see \cite[Theorem 4.3]{Bell16}), one that no longer holds for $L^2(\cs)$.
As we are motivated by the higher dimensional problem, several of our proofs have been written so as to mirror that setting.

\section{Interior and exterior Hardy spaces}\label{S:Pairing-Hardy-Spaces}

\subsection{Lipschitz curves}\label{SS:curves-in-Riemann-sphere}

A function $\varphi:\R \to \R$ is called Lipschitz if there exists a constant $K >0$ (the Lipschitz constant) so that $|\varphi(x_1)-\varphi(x_2)| \le K|x_1-x_2|$ for all $x_1,x_2 \in \R$.
Such a function is differentiable almost everywhere with an $L^\infty$ derivative.

A simple closed curve $\gamma$ in the plane is called Lipschitz if there exists a finite number rectangles $\{R_j \}_{j=1}^n$ with sides parallel to the coordinate axes, angles $\{ \theta_j \}_{j=1}^n$ and Lipschitz functions $\varphi_j:\R \to \R$, such that the union $\cup_{j=1}^n \{e^{-i\theta_j} R_j\}$ covers $\gamma$ and the intersection $\{ e^{i\theta_j}(\gamma) \} \cap R_j = \{ x+i\varphi_j(x) : \,\, x \in (a_j,b_j) \}$, for some $a_j < b_j < \infty$.
If $\gamma$ is a simple closed curve in the Riemann sphere passing through $\infty$, say that $\gamma$ is Lipschitz if there is a Möbius transformation $\Phi$ mapping $\gamma$ to a simple closed Lipschitz curve in the plane.

Each simple closed oriented curve $\gamma \subset \hat{\C}$ bounds two simply connected domains: write $\Omega_+$ for the domain lying to the left and $\Omega_-$ the domain to the right.
When $\gamma$ is a planar curve, it is assumed to have counterclockwise orientation unless explicitly stated otherwise; we refer to $\Omega_+$ and $\Omega_-$ as {\em interior} and {\em exterior} domains, respectively.
$\Omega_+$ and $\Omega_-$ are called Lipschitz domains when their boundary $\gamma$ is Lipschitz.
Note that if $\gamma$ is an oriented curve in the Riemann sphere and $\Phi$ is a Möbius transformation with its pole in $\Omega_-$, the image curve $\Phi(\gamma)$ is a planar curve oriented counterclockwise.
When the pole is in $\Omega_+$, the orientation is reversed.

Let $\gamma$ be a simple closed planar curve oriented counterclockwise.
For $\beta > 0$ and $\zeta \in \gamma$, define a set called a {\em non-tangential approach region} to $\zeta$ by
\begin{equation*}
\Gamma(\zeta) = \{z \in \C : |z - \zeta| \le (1+\beta)\,\mathrm{dist}(z,\gamma) \}.
\end{equation*}
Lipschitz curves are well-known to satisfy the {\em uniform interior and exterior cone condition}, meaning there exists $\beta,r>0$ such that for each $\zeta \in \gamma$, one of the two components of $\Gamma(\zeta) \cap D(\zeta,r)$ is contained in $\Omega_+$ and the other contained in $\Omega_-$.
Write the interior and exterior non-tangential approach regions by $\Gamma_\pm(\zeta) = \Gamma(\zeta) \,\cap \, \Omega_\pm$.
An important technical tool for work on Lipschitz domains is a {\em Neças exhaustion}, a method of approximation by $C^\infty$ subdomains with uniformly bounded Lipschitz constants; see \cite{Lan99,Lanzani2000} for details.

Given a function $g:\Omega_\pm \to \C$ and $\zeta \in \gamma$, its non-tangential maximal function $g^*$ and non-tangential limit $\dot{g}$ (when it exists) are defined to be
\begin{equation*}
g^*(\zeta) = \sup_{z \in \Gamma_\pm(\zeta)} |g(z)|, \qquad \qquad \dot{g} (\zeta) = \lim_{\Gamma_\pm(\zeta) \ni z \to \zeta} g(z).
\end{equation*}
Given $f \in L^2(\gamma)$, its Cauchy transform \eqref{E:Cauchy-transform-boundary} arises as the non-tangential limit of the Cauchy integral in \eqref{E:Cauchy-integral-formula}.
A deep and highly non-trivial result in \cite{CMM82} shows that this limit exists a.e. for Lipschitz $\gamma$, and further, defines an $L^2(\gamma)$ function.
We slightly abuse notation by denoting both the Cauchy integral of $f$ and its boundary values by $\bm{C}_\pm f$, but our intended meaning should always be clear from context.

We now {\em define} the Hardy space $\ch_\pm^2(\gamma)$ as the image of $L^2(\gamma)$ under $\bm{C}_\pm$:
\begin{equation}\label{E:def-of-Hardy-space}
\ch_\pm^2(\gamma) = \{\bm{C}_\pm f : f \in L^2(\gamma) \};
\end{equation}
since $\gamma$ is always assumed to be Lipschitz, this definition is equivalent to several other characterizations of the Hardy space used in the literature; see \cite{Lanzani2000}.
We have been intentionally flexible with our definition so that Hardy space functions can be at times thought of as holomorphic functions with $L^2$ boundary values and at other times as the boundary values themselves.
Observe from \eqref{E:Cauchy-integral-formula} that functions in $\ch_-^2(\gamma)$ necessarily vanish at $\infty$.

The results in \cite{CMM82} along with the Plemelj jump formula (see \cite{Mus92}) allow rigorous justification of the following ``intuitive" statements for Lipschitz $\gamma$: if $f \in \ch_\pm^2(\gamma)$, then $\bm{C}_\pm f = f$ (Cauchy's integral formula), while $\bm{C}_\mp f \equiv 0$ (Cauchy's theorem).






\begin{remark}\label{R:density}
Given $\alpha \in (0,1)$, define the space of $\alpha$-Hölder continuous functions on $\gamma$ to be
$$
C^\alpha(\gamma) := \{f : |f(x)-f(y)| < |x-y|^\alpha, \,\, x,y \in \gamma \},
$$
and denote by $A^\alpha(\ol{\Omega_\pm})$ the space of holomorphic functions on $\Omega_\pm$ with $C^\alpha$ boundary values.
If $\gamma$ is Lipschitz and $f \in C^{\alpha}(\gamma)$, then $\bm{C}_\pm f \in A^\alpha(\ol{\Omega_\pm})$; see \cite[Appendix 2]{Mus92}.
The regularity of $\bm{C}_\pm$ in $C^\alpha$ together with its boundedness in $L^2(\gamma)$ imply that $A^\alpha(\ol{\Omega_\pm})$ is a dense subspace of the Hardy space $\ch_\pm^2(\gamma)$.
\hfill $\lozenge$
\end{remark}

\subsection{Dual space characterization}\label{SS:Two-pairings-dual-spaces}

A duality paradigm of Grothendieck \cite{Groth1953a}, Köthe \cite{Koethe1953} and Sebasti\~{a}o e Silva \cite{SebeSilva1950} identifies duals of holomorphic function spaces on simply connected domains with spaces of holomorphic functions on their complements:
Let $\co(\Omega_+)$ denote the space of all holomorphic functions on $\Omega_+$ under the standard Frechét topology.
Under this paradigm, the dual can be identified with $\co_0(\ol{\Omega_-})$, the space of functions holomorphic in a neighborhood of $\Omega_-$ which vanish at $\infty$.
The functionals themselves are represented using bilinear pairings $\<<>>{\cdot}{\cdot}\,$ à la \eqref{E:bilinear-pairing-def} to pair $f \in \co(\Omega_+)$ and $g \in \co_0(\ol{\Omega_-})$, where the path of integration is taken inside $\Omega_+$ and sufficiently close to $\gamma$.

We follow this paradigm and identify the dual space of $\ch_\pm^2(\gamma)$ with $\ch_\mp^2(\gamma)$.

Since $\bm{C}_\pm$ is bounded on $L^2(\gamma)$ whenever $\gamma$ is Lipschitz, a bounded adjoint exists (with respect to the standard inner product), characterized by $\langle \bm{C}_\pm f,g \rangle = \langle f,\bm{C}^*_\pm g \rangle$.
Explicitly,
\begin{equation*}\label{C:Cauchy-adjoint-formula}
\bm{C}^*_{\pm}g(z) =  \frac{g(z)}{2} \pm \frac{1}{2\pi i} \ol{T(z)} \,\mathrm{P.V.} \int_{\gamma} \frac{g(\zeta)}{\ol{\zeta} - \ol{z}}\,  d\sigma(\zeta),
\end{equation*}
where the formula is understood to hold for almost every $z \in \gamma$.



\begin{proposition}
The Cauchy transforms $\bm{C}_+$ and $\bm{C}_-$ can be viewed as ``adjoints" with respect to the bilinear pairing \eqref{E:bilinear-pairing-def}.
Indeed,
\begin{equation*}
\<<>>{\bm{C}_\pm f}{g} = \<<>>{f}{\bm{C}_\mp g} = \<<>>{\bm{C}_\pm f}{\bm{C}_\mp g}.
\end{equation*}
\end{proposition}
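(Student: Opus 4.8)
The plan is to reduce the proposition to two facts and then finish with a single line of bilinear algebra. The two facts are: \textbf{(i)} the Plemelj-type identity $\bm{C}_+ + \bm{C}_- = I$ on $L^2(\gamma)$; and \textbf{(ii)} the vanishing of the bilinear pairing on equally-signed Hardy pairs, i.e.\ $\<<>>{u}{v} = 0$ whenever $u,v \in \ch^2_+(\gamma)$, and likewise whenever $u,v \in \ch^2_-(\gamma)$. Fact (i) follows directly from the boundary formula \eqref{E:Cauchy-transform-boundary} and the kernel definition \eqref{E:Cauchy-kernel-def}: since $C_+(z,\zeta) = -C_-(z,\zeta)$, the two principal-value integrals cancel when $\bm{C}_+ h$ and $\bm{C}_- h$ are added, while the two copies of $\tfrac12 h$ combine, giving $\bm{C}_+ h + \bm{C}_- h = h$ for all $h \in L^2(\gamma)$. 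This is consistent with Cauchy's formula and theorem as recorded earlier: for $f \in \ch^2_+(\gamma)$ one has $\bm{C}_+ f = f$ and $\bm{C}_- f = 0$.

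For fact (ii) I would argue by density. Given $u = \bm{C}_+ f$ and $v = \bm{C}_+ g$ in $\ch^2_+(\gamma)$, use Remark \ref{R:density} to select Hölder data whose Cauchy transforms $u_n, v_n \in A^\alpha(\ol{\Omega_+})$ converge to $u,v$ in $L^2(\gamma)$. Each product $u_n v_n$ is holomorphic on $\Omega_+$ and continuous up to $\gamma$, so Cauchy's theorem — applied along a Ne\v{c}as exhaustion of $\Omega_+$ and passing to the limit — gives $\oint_\gamma u_n v_n\,d\zeta = 0$, that is $\<<>>{u_n}{v_n} = 0$. The bilinear form is $L^2$-continuous, since $|\<<>>{p}{q}| = |\langle p, \ol{qT}\rangle| \le \norm{p}_{L^2(\gamma)}\,\norm{q}_{L^2(\gamma)}$ using $|T| = 1$; letting $n \to \infty$ yields $\<<>>{u}{v} = 0$. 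The exterior statement is handled identically, now integrating over the region bounded by $\gamma$ and a large circle $C_R$: because every element of $\ch^2_-(\gamma)$ vanishes at $\infty$, the product $u_n v_n$ decays like $|z|^{-2}$, the contribution of $C_R$ tends to $0$ as $R \to \infty$, and again $\oint_\gamma u_n v_n\,d\zeta = 0$.

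With (i) and (ii) in hand the proposition is immediate; I treat the top signs, the bottom signs being symmetric. Expanding $g = \bm{C}_+ g + \bm{C}_- g$ and then $f = \bm{C}_+ f + \bm{C}_- f$ via bilinearity,
\[
\<<>>{\bm{C}_+ f}{g} = \<<>>{\bm{C}_+ f}{\bm{C}_+ g} + \<<>>{\bm{C}_+ f}{\bm{C}_- g} = \<<>>{\bm{C}_+ f}{\bm{C}_- g},
\]
\[
\<<>>{f}{\bm{C}_- g} = \<<>>{\bm{C}_+ f}{\bm{C}_- g} + \<<>>{\bm{C}_- f}{\bm{C}_- g} = \<<>>{\bm{C}_+ f}{\bm{C}_- g},
\]
the discarded terms vanishing by the interior and exterior cases of (ii) respectively; hence all three quantities equal $\<<>>{\bm{C}_+ f}{\bm{C}_- g}$. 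The only genuinely delicate point is fact (ii): Cauchy's theorem for a product of two Hardy functions is geometrically transparent, but on a merely Lipschitz curve it must be secured through the approximation argument above, together with the $L^2$-continuity of $\<<>>{\cdot}{\cdot}$ and, in the exterior case, control of the behavior at $\infty$.
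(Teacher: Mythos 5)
Your proposal is correct, and it takes a genuinely different route from the paper's. The paper proves the single operator identity $\bm{C}^*_\pm(\ol{gT}) = \ol{\bm{C}_\mp(g)\,T}$ by direct computation with the explicit principal-value formula for $\bm{C}^*_\pm$, and then converts between the two pairings via $\<<>>{f}{g} = \langle f, \ol{gT}\rangle$, obtaining $\<<>>{\bm{C}_\pm f}{g} = \langle f, \bm{C}^*_\pm(\ol{gT})\rangle = \<<>>{f}{\bm{C}_\mp g}$, with the third expression handled by idempotence of $\bm{C}_\mp$. You instead combine the Plemelj identity $\bm{C}_+ + \bm{C}_- = I$ (which is indeed immediate from \eqref{E:Cauchy-transform-boundary} and $C_+ = -C_-$) with the isotropy of each Hardy space for the bilinear pairing, i.e.\ $\<<>>{u}{v}=0$ whenever $u,v$ lie in the same space $\ch^2_\pm(\gamma)$, and then conclude by bilinearity alone. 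The paper's computation stays entirely at the level of boundary operators: no approximation, no contour deformation, no behavior at infinity, and no issue with the fact that a product of two $H^2$ functions is merely $H^1$. Your route must confront exactly these points, and you correctly identify and handle them: the same-sign vanishing cannot be quoted from the paper's ``Cauchy's theorem'' statement $\bm{C}_\mp f \equiv 0$ (which concerns $L^2$ data, not $L^1$ products), so you secure it through the H\"older density of Remark \ref{R:density}, the continuity bound $|\<<>>{p}{q}| \le \norm{p}\,\norm{q}$, and, in the exterior case, the $O(|z|^{-2})$ decay of products of exterior Hardy functions, which kills the large-circle contribution. What your approach buys in exchange is conceptual clarity: it exhibits $L^2(\gamma)$ as a non-orthogonal direct sum of two subspaces, each isotropic for $\<<>>{\cdot}{\cdot}$, so that the mutual adjointness of $\bm{C}_+$ and $\bm{C}_-$ becomes a formal consequence of that geometry rather than a kernel computation; this is precisely the Grothendieck--K\"othe duality picture of Section \ref{SS:Two-pairings-dual-spaces}, and it is the formulation that transfers to the higher-dimensional Leray setting motivating the paper, where no explicit adjoint formula is available.
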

\begin{proof}
Since $\bm{C}_\pm$ is a projection operator, it will suffice to prove the first equality.
We claim that if $g \in L^2(\gamma)$ and $T$ is the almost everywhere defined unit tangent vector for $\gamma$, then $\bm{C}^*_\pm(\ol{g T}) = \ol{ \bm{C}_\mp(g) T }$.
Indeed, for a.e. $z \in \gamma$, we have
\begin{align*}
\bm{C}^*_\pm(\ol{g T})(z) 
&= \frac{\ol{g(z)} \ol{T(z)}}{2} \pm \frac{1}{2\pi i} \ol{T(z)} \,\mathrm{P.V.} \int_{\gamma} \frac{\ol{g(\zeta)} \ol{T(\zeta)}}{\ol{\zeta} - \ol{z}}\,  d\sigma(\zeta) \notag \\
&= \left( \frac{\ol{g(z)}}{2} \pm \frac{1}{2\pi i} \,\mathrm{P.V.} \int_{\gamma} \frac{\ol{g(\zeta)} \ol{T(\zeta)} }{\ol{\zeta} - \ol{z}}\,  d\sigma(\zeta) \right) \ol{T(z)} \\
&= \ol{ \left( \frac{g(z)}{2} \mp \frac{1}{2\pi i} \,\mathrm{P.V.} \oint_{\gamma} \frac{g(\zeta)}{\zeta - z}\,  d\zeta \right) T(z) } = \ol{\bm{C}_\mp(g)(z) T(z)}.
\end{align*}
Thus we see that $\,\<<>>{\bm{C}_\pm f}{g} = \langle \bm{C}_\pm f, \ol{g T} \rangle = \langle f, \bm{C}_\pm^* (\ol{g T}) \rangle = \langle f, \ol{ \bm{C}_\mp(g) T } \rangle = \<<>>{f}{\bm{C}_\mp g}$.
\end{proof}

\begin{proposition}\label{P:interior-exterior-dual-identification}
The dual space of $\ch_\pm^2(\gamma)$ can be identified with $\ch_\mp^2(\gamma)$ via functionals $\psi_g: \ch_\pm^2(\gamma) \to \C$, $g \in \ch_\mp^2(\gamma)$, given by $\psi_g(f) = \<<>>{f}{g}$.
Moreover,
\begin{equation}\label{E:range-of-functional-norm}
\norm{\bm{C}_\pm}^{-1} \norm{g} \le \norm{\psi_g}_{\sf op} \le  \norm{g}.
\end{equation}
\end{proposition}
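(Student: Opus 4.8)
The plan is to transfer the entire question to the Hermitian inner product, where orthogonal-projection estimates are available. Using the relation $\<<>>{f}{g} = \langle f, \ol{gT}\rangle$ from \eqref{E:bilinear-pairing-def}, I would first write $\psi_g(f) = \langle f, \ol{gT}\rangle$ and note that, since $f$ ranges only over the closed subspace $\ch^2_\pm(\gamma)$, one may replace $\ol{gT}$ by its orthogonal projection onto this subspace without changing the functional: for $f\in\ch^2_\pm(\gamma)$ one has $\langle f,\ol{gT}\rangle = \langle \bm{S}_\pm f,\ol{gT}\rangle = \langle f,\bm{S}_\pm(\ol{gT})\rangle$. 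Since $\bm{S}_\pm(\ol{gT})\in\ch^2_\pm(\gamma)$, the Riesz representation theorem on the Hilbert space $\ch^2_\pm(\gamma)$ then yields the clean identity
\[
\norm{\psi_g}_{\sf op} = \norm{\bm{S}_\pm(\ol{gT})}.
\]
Both inequalities in \eqref{E:range-of-functional-norm} will be read off from this single formula, together with the fact that $\norm{\ol{gT}} = \norm{g}$ (because $|T| = 1$ almost everywhere).

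The upper bound is then immediate: $\bm{S}_\pm$ is an orthogonal projection, hence a contraction, so $\norm{\bm{S}_\pm(\ol{gT})} \le \norm{\ol{gT}} = \norm{g}$. The lower bound is the substantive point, and it rests on recognizing $\ol{gT}$ as a fixed vector of $\bm{C}^*_\pm$. Indeed, since $g \in \ch^2_\mp(\gamma)$ we have $\bm{C}_\mp g = g$, and feeding this into the identity $\bm{C}^*_\pm(\ol{gT}) = \ol{\bm{C}_\mp(g)\,T}$ established in the preceding proposition shows $\bm{C}^*_\pm(\ol{gT}) = \ol{gT}$. Combining this with the projection identity $\bm{C}^*_\pm \bm{S}_\pm = \bm{C}^*_\pm$ — the adjoint of the relation $\bm{S}_\pm \bm{C}_\pm = \bm{C}_\pm$, valid because $\bm{S}_\pm$ and $\bm{C}_\pm$ share the range $\ch^2_\pm(\gamma)$ — yields $\ol{gT} = \bm{C}^*_\pm \bm{S}_\pm(\ol{gT})$. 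Taking norms gives $\norm{g} = \norm{\ol{gT}} \le \norm{\bm{C}^*_\pm}\,\norm{\bm{S}_\pm(\ol{gT})} = \norm{\bm{C}_\pm}\,\norm{\psi_g}_{\sf op}$, which is exactly the left-hand inequality.

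It remains to check that $g \mapsto \psi_g$ is a bijection onto the dual. Injectivity is free from the lower bound, since $\psi_g = 0$ forces $\norm{g} = 0$. For surjectivity I would take an arbitrary $\psi \in \ch^2_\pm(\gamma)'$, represent it by Riesz as $\psi(\cdot) = \langle \cdot, k\rangle$ with $k \in \ch^2_\pm(\gamma)$, and then produce the preimage explicitly by setting $h = \bm{C}^*_\pm k$ and $g = \ol{hT}$. One checks that $h$ is fixed by $\bm{C}^*_\pm$ (by idempotency of $\bm{C}^*_\pm$), so that $g \in \ch^2_\mp(\gamma)$ via the same adjoint identity used above; and that $\bm{S}_\pm h = \bm{S}_\pm \bm{C}^*_\pm k = \bm{S}_\pm k = k$, using the companion relation $\bm{S}_\pm \bm{C}^*_\pm = \bm{S}_\pm$ (the adjoint of $\bm{C}_\pm \bm{S}_\pm = \bm{S}_\pm$). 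Consequently $\psi_g(f) = \langle f, \bm{S}_\pm h\rangle = \langle f, k\rangle = \psi(f)$ for all $f \in \ch^2_\pm(\gamma)$. The main obstacle throughout is conceptual rather than computational: one must isolate the fixed-point property of $\ol{gT}$ under $\bm{C}^*_\pm$ and the two projection identities relating $\bm{C}_\pm$ to $\bm{S}_\pm$. Once these are in hand, both the norm bounds and the inversion needed for surjectivity fall out mechanically.
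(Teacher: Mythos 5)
Your proof is correct, but it takes a genuinely different route from the paper's. The paper never invokes the Szeg\H{o} projection in this argument: its upper bound is Cauchy--Schwarz; its lower bound writes $\norm{g} = \sup\left\{\left|\<<>>{h}{g}\right| : h \in L^2(\gamma),\ \norm{h}=1\right\}$ and moves the Cauchy projection across the bilinear pairing, $\<<>>{h}{g} = \<<>>{h}{\bm{C}_\mp g} = \<<>>{\bm{C}_\pm h}{g}$, at the cost of a factor $\norm{\bm{C}_\pm}$; and injectivity is verified with the explicit test function $f = \bm{C}_\pm\big(\ol{(g_1-g_2)T}\big)$, on which the difference functional evaluates to $\norm{g_1-g_2}^2$. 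You instead derive the exact identity $\norm{\psi_g}_{\sf op} = \norm{\bm{S}_\pm(\ol{gT})}$ by projecting the representing vector into the subspace and applying Riesz there; this is sharper than what the paper establishes, since both inequalities of \eqref{E:range-of-functional-norm} drop out of a single formula (the upper bound from contractivity of $\bm{S}_\pm$, the lower from the fixed-point relation $\bm{C}^*_\pm(\ol{gT}) = \ol{gT}$ combined with $\bm{C}^*_\pm\bm{S}_\pm = \bm{C}^*_\pm$), and injectivity comes for free from the lower bound rather than needing a bespoke test function. Your surjectivity element $g = \ol{(\bm{C}^*_\pm k)T}$ is in fact the same function as the paper's $g = \bm{C}_\mp(\ol{kT})$ --- the adjoint identity $\bm{C}^*_\pm(\ol{uT}) = \ol{\bm{C}_\mp(u)\,T}$ applied with $u = \ol{kT}$ converts one into the other --- so there the two proofs coincide up to presentation. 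The only spot where you are terse is the claim that $g = \ol{hT}$ lies in $\ch^2_\mp(\gamma)$: one should note that $\ol{gT} = h$ (using $|T| = 1$ a.e.), after which the adjoint identity together with $\bm{C}^*_\pm h = h$ yields $\bm{C}_\mp g = g$; the step is correct as indicated, just worth spelling out.
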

\begin{proof}
Since $\ch_\pm^2(\gamma)$ is a Hilbert space, it is self dual in the ordinary inner product.
Thus, given a bounded linear functional $\phi: \ch_\pm^2(\gamma) \to \C$, there is a unique $h \in \ch_\pm^2(\gamma)$ so that for any $f \in \ch_\pm^2(\gamma)$,
\begin{align*}
\phi(f) = \langle f,h \rangle = \<<>>{f}{\ol{hT}} = \<<>>{ \bm{C}_\pm f}{\ol{hT}} = \<<>>{ f}{\bm{C}_\mp(\ol{hT})}.
\end{align*}
Now set $g = \bm{C}_\mp(\ol{hT}) \in \ch_\mp^2(\gamma)$, so that $\phi = \psi_g = \<<>>{\cdot}{g} \in \ch_\pm^2(\gamma)'$.

Given distinct $g_1,g_2 \in \ch_\mp^2(\gamma)$, we now show the functionals $\psi_{g_1} \neq \psi_{g_2}$.
It will suffice to exhibit an $f \in \ch_\pm^2(\gamma)$ with $\psi_{g_1}(f) \neq \psi_{g_1}(f)$.
Set $f = \bm{C}_\pm \big( \ol{(g_1-g_2)T} \big)$, which is clearly in $\ch_\pm^2(\gamma)$.
Then
\begin{align*}
(\psi_{g_1} - \psi_{g_2})(f) &=\<<>>{f}{g_1-g_2} \\
&= \<<>>{\bm{C}_\pm \big( \ol{(g_1-g_2)T}\big)}{g_1-g_2} \\
&= \<<>>{\ol{(g_1-g_2)T}}{\bm{C}_\mp (g_1-g_2)}
= \<<>>{\ol{(g_1-g_2)T}}{g_1-g_2} 
= \norm{g_1-g_2}^2 > 0.
\end{align*}

We now prove \eqref{E:range-of-functional-norm}.
The right-hand inequality follows from Cauchy-Schwarz.
For the left-hand inequality, note that for $g \in \ch_\mp^2(\gamma)$
\begin{alignat*}{2}
\norm{g}
& = \sup \left\{ \left|\,\<<>>{h}{g}\,\right| : h\in L^2(\gamma),\, \left\| h\right\| = 1 \right \} && \\
&=  \sup \left\{ \left|\,\<<>>{h}{\bm{C}_\mp g}\,\right| : {h\in L^2(\gamma),\, \left\| h\right\| = 1} \right\} 
&&  \\
& = \sup \left\{ \left|\,\<<>>{ \bm{C}_\pm h}{g}\,\right| : {h\in L^2(\gamma),\, \left\| h\right\| = 1} \right\} && \le \sup \left\{ \left|\,\<<>>{f}{g}\,\right| : {f\in\ch_\pm^2(\gamma),\, \left\| f\right\| \le \left\|\bm{C}_\pm\right\| } \right\}\\
& &&= \left\|\bm{C}_\pm\right\| \cdot \sup \left\{ \left|\,\<<>>{f}{g}\,\right| : {f\in\ch_\pm^2(\gamma),\, \left\| f\right\| = 1} \right\}\\
& &&= \left\|\bm{C}_\pm\right\| \cdot \norm{\psi_g}_{\sf op}.
\end{alignat*}
\end{proof}


\begin{theorem}\label{T:norm-pairing-efficieny}
Let $\gamma$ be a simple closed Lipschitz curve in the plane.
The norms of the Cauchy transforms $\bm{C}_\pm : L^2(\gamma) \to \ch^2_\pm(\gamma)$ are given by
\begin{equation}\label{E:Cauchy-norm-eq}
\frac{1}{\norm{\bm{C}_+}} = \inf_{ \substack{g \in \ch_+^2(\gamma) \\ {g \neq 0} }} \Bigg\{ \sup_{\substack{ f \in \ch_-^2(\gamma) \\ f \neq 0 }} \frac{|\<<>>{f}{g}|}{\norm{f}\norm{g}} \Bigg\} 
= \inf_{\substack{g \in \ch_-^2(\gamma) \\ g \neq 0}} \Bigg\{ \sup_{\substack{ f \in \ch_+^2(\gamma) \\ f \neq 0}} \frac{ |\<<>>{f}{g}|}{\norm{f}\norm{g}} \Bigg\}
=\frac{1}{\norm{\bm{C}_-}}.
\end{equation}
\end{theorem}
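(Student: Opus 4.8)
The plan is to read each inner supremum as the dual norm of one of the pairing functionals $\psi_g$ from Proposition~\ref{P:interior-exterior-dual-identification}, to recognize the outer infimum as the reciprocal of the norm of an inverse isomorphism, and then to evaluate that norm using the antilinear symmetry that interchanges $\bm{C}_+$ and $\bm{C}_-$. I focus on the middle equality, $\inf_{0\ne g\in\ch_-^2(\gamma)}\sup_{0\ne f\in\ch_+^2(\gamma)}\frac{|\<<>>{f}{g}|}{\norm f\,\norm g}=1/\norm{\bm{C}_-}$; the first equality then follows by running the same argument with the roles of $+$ and $-$ interchanged. For fixed $g\in\ch_-^2(\gamma)$ the inner supremum is exactly $\norm{\psi_g}_{\sf op}/\norm g$, where $\psi_g\in\ch_+^2(\gamma)'$. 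By Proposition~\ref{P:interior-exterior-dual-identification} the map $\Psi\colon g\mapsto\psi_g$ is a linear bijection of $\ch_-^2(\gamma)$ onto $\ch_+^2(\gamma)'$ that is bounded below, so (writing $\norm{\Psi g}=\norm{\psi_g}_{\sf op}$) the full expression equals $\inf_{g\ne0}\norm{\Psi g}/\norm g=1/\norm{\Psi^{-1}}$. It therefore suffices to prove $\norm{\Psi^{-1}}=\norm{\bm{C}_-}$.

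To compute $\norm{\Psi^{-1}}$ I introduce the conjugate-linear map $U\colon L^2(\gamma)\to L^2(\gamma)$, $Uh=\ol{hT}$; since $|T|=1$ it is a norm-preserving involution ($U^2=\mathrm{id}$). The adjoint identity $\bm{C}^*_\pm(\ol{hT})=\ol{\bm{C}_\mp(h)\,T}$ established just before Proposition~\ref{P:interior-exterior-dual-identification} becomes $\bm{C}^*_\pm U=U\bm{C}_\mp$, equivalently $\bm{C}_- U=U\bm{C}^*_+$. Following the proof of Proposition~\ref{P:interior-exterior-dual-identification}, a functional $\phi\in\ch_+^2(\gamma)'$ has Riesz representative $h\in\ch_+^2(\gamma)$ with $\norm\phi=\norm h$, and $\Psi^{-1}\phi=\bm{C}_-(Uh)$. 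Hence $\norm{\Psi^{-1}}=\sup_{0\ne h\in\ch_+^2(\gamma)}\norm{\bm{C}_-(Uh)}/\norm h$, and since $\bm{C}_- U=U\bm{C}^*_+$ with $U$ isometric, $\norm{\bm{C}_-(Uh)}=\norm{\bm{C}^*_+ h}$, so $\norm{\Psi^{-1}}=\sup_{0\ne h\in\ch_+^2(\gamma)}\norm{\bm{C}^*_+ h}/\norm h$.

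This last supremum extends over $\ch_+^2(\gamma)=\ran\bm{C}_+$ only, but this is exactly the subspace on which $\bm{C}^*_+$ realizes its norm: because $\ker\bm{C}^*_+=(\ran\bm{C}_+)^\perp=\ch_+^2(\gamma)^\perp$ and a bounded operator attains its norm on the orthogonal complement of its kernel, $\sup_{0\ne h\in\ch_+^2(\gamma)}\norm{\bm{C}^*_+ h}/\norm h=\norm{\bm{C}^*_+}=\norm{\bm{C}_+}$. Thus $\norm{\Psi^{-1}}=\norm{\bm{C}_+}$, so the middle quantity equals $1/\norm{\bm{C}_+}$. Finally, rearranging the adjoint identity gives $\bm{C}_-=U\bm{C}^*_+ U$, a conjugation of $\bm{C}^*_+$ by the isometric involution $U$, whence $\norm{\bm{C}_-}=\norm{\bm{C}^*_+}=\norm{\bm{C}_+}$; so the middle quantity also equals $1/\norm{\bm{C}_-}$, as claimed. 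The symmetric computation (exchanging $+$ and $-$) yields the first equality and re-proves $\norm{\bm{C}_+}=\norm{\bm{C}_-}$, completing the chain.

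I expect the crux to be the passage from the supremum restricted to $\ch_+^2(\gamma)$ to the full norm $\norm{\bm{C}^*_+}$: this is legitimate precisely because $\ch_+^2(\gamma)=\ran\bm{C}_+$ is the orthogonal complement of $\ker\bm{C}^*_+$, so restricting loses none of the norm, while the antilinear symmetry $U$ is what converts the $\bm{C}_-$ produced by $\Psi^{-1}$ into the $\bm{C}^*_+$ whose norm is directly accessible.
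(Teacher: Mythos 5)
Your proof is correct, and it takes a genuinely different route from the paper's. The paper argues by a squeeze: the lower bound in \eqref{E:range-of-functional-norm} gives $\norm{\bm{C}_\pm}^{-1}\le \inf\sup$ on both sides; an $\epsilon$-argument with a near-maximizing vector $h_\epsilon$ (chosen so $\norm{\bm{C}_\pm h_\epsilon}=1$ and $\norm{h_\epsilon}<(\norm{\bm{C}_\pm}-\epsilon)^{-1}$, then estimating $\sup_f\left|\<<>>{\bm{C}_\pm h_\epsilon}{f}\right|\le\norm{h_\epsilon}$ by Cauchy--Schwarz) gives the reverse bounds; and the four inequalities are chained cyclically, $\frac{1}{\norm{\bm{C}_+}}\le\cdots\le\frac{1}{\norm{\bm{C}_-}}\le\cdots\le\frac{1}{\norm{\bm{C}_+}}$, forcing equality everywhere, so that $\norm{\bm{C}_+}=\norm{\bm{C}_-}$ emerges as a by-product of the cycle rather than as a separate lemma. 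You instead evaluate each inf-sup exactly: it equals $1/\norm{\Psi^{-1}}$ for the duality map $\Psi\colon g\mapsto\psi_g$; the explicit inverse $\Psi^{-1}\phi=\bm{C}_-(Uh)$ together with the intertwining $\bm{C}_-U=U\bm{C}_+^*$ turns $\norm{\Psi^{-1}}$ into $\sup_{0\ne h\in\ch_+^2(\gamma)}\norm{\bm{C}_+^*h}/\norm{h}$; and since $\ch_+^2(\gamma)$ is by definition the range of the bounded idempotent $\bm{C}_+$, hence closed and equal to $(\ker\bm{C}_+^*)^\perp$, this restricted supremum is the full norm $\norm{\bm{C}_+^*}=\norm{\bm{C}_+}$. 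All the standard facts you invoke (closedness of the range of a bounded projection, $\ker A^*=(\mathrm{ran}\,A)^\perp$, and that the norm-defining supremum may be restricted to $(\ker A)^\perp$) are applied correctly. Your route buys an $\epsilon$-free, exact identification of each inf-sup individually, plus the clean operator identity $\bm{C}_-=U\bm{C}_+^*U$, which yields $\norm{\bm{C}_+}=\norm{\bm{C}_-}$ instantly by conjugation with an isometric antilinear involution---a structural fact the paper leaves implicit. The paper's route buys economy of means: it uses only the statement of Proposition~\ref{P:interior-exterior-dual-identification} and the bilinear adjoint relation, with no appeal to Riesz representatives or kernel/range duality, and its purely pairing-level formulation is the one that transfers to the higher-dimensional Leray setting the authors say they are mirroring. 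One small flag, not a gap: your argument uses the explicit formula for $\Psi^{-1}$, i.e.\ the proof rather than merely the statement of Proposition~\ref{P:interior-exterior-dual-identification}, so it is not self-contained from the propositions alone, though the formula is indeed established in the paper.
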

\begin{proof}
Given a nonzero $g \in \ch_\mp^2(\gamma)$,
the lower bound in \eqref{E:range-of-functional-norm} says
\[
\norm{\bm{C}_\pm}^{-1} \norm{g} \le \norm{\psi_g}_{\sf op} = \sup \left\{|\<<>>{f}{g}|: f \in \ch_{\pm}^2(\gamma), \, \norm{f}=1 \right\}.
\]
As this holds for every such $g$, we obtain
\begin{equation}\label{E:inf-sup-ineq1}
\frac{1}{\norm{\bm{C}_\pm}} \le \inf_{\substack{ g \in \ch_{\mp}^2(\gamma)  \\ g \neq 0}} \Bigg\{ \sup_{\substack{ f \in \ch_{\pm}^2(\gamma)  \\ f \neq 0}} \frac{\left|\<<>>{f}{g} \right|}{\norm{f}\norm{g}} \Bigg\}.
\end{equation}
On the other hand, given (a sufficiently small) $\epsilon > 0$, there exists $h_\epsilon \in L^2(\gamma)$ such that $\norm{\bm{C}_\pm h_\epsilon} = 1$ and $\norm{h_\epsilon} < (\norm{\bm{C}_\pm}-\epsilon)^{-1}$.
Now observe that
\begin{equation*}
\sup_{\substack{f \in \ch_{\mp}^2(\gamma) \\ \norm{f} = 1 }} \left|\<<>>{\bm{C}_\pm h_\epsilon}{f} \right| 
= \sup_{\substack{f \in \ch_{\mp}^2(\gamma) \\ \norm{f} = 1 }} \left|\<<>>{h_\epsilon}{\bm{C}_\mp f} \right|
= \sup_{\substack{f \in \ch_{\mp}^2(\gamma) \\ \norm{f} = 1 }} \left|\<<>>{h_\epsilon}{f} \right|
\le \norm{h_\epsilon} < \frac{1}{\norm{\bm{C}_\pm}-\epsilon}.
\end{equation*}
Taking $g=\bm{C}_\pm h_\epsilon \in \ch_\pm^2(\gamma)$ and letting $\epsilon \to 0$ we obtain
\begin{equation}\label{E:inf-sup-ineq2}
\inf_{\substack{g \in \ch_{\pm}^2(\gamma)  \\ g \neq 0}} \Bigg\{ \sup_{\substack{ f \in \ch_{\mp}^2(\gamma)  \\ f \neq 0}} \frac{\left|\<<>>{g}{f} \right|}{\norm{g}\norm{f}} \Bigg\}
\le \frac{1}{\norm{\bm{C}_\pm}}.
\end{equation}
Now combine all four individual inequalities in \eqref{E:inf-sup-ineq1} and \eqref{E:inf-sup-ineq2} to obtain 
\begin{equation*}
\frac{1}{\norm{\bm{C}_+}} 
\le \inf_{\substack{ g \in \ch_{-}^2(\gamma)  \\ g \neq 0}} \Bigg\{ \sup_{\substack{ f \in \ch_{+}^2(\gamma)  \\ f \neq 0}} \frac{\left|\<<>>{f}{g} \right|}{\norm{f}\norm{g}} \Bigg\} 
\le \frac{1}{\norm{\bm{C}_-}} 
\le \inf_{\substack{ g \in \ch_{+}^2(\gamma)  \\ g \neq 0}} \Bigg\{ \sup_{\substack{ f \in \ch_{-}^2(\gamma)  \\ f \neq 0}} \frac{\left|\<<>>{f}{g} \right|}{\norm{f}\norm{g}} \Bigg\} 
\le \frac{1}{\norm{\bm{C}_+}},
\end{equation*}
forcing equality to hold at every step.
\end{proof}

\subsection{The Szeg\H{o} kernel}\label{SS:Szego-kernel}
Several elementary properties are collected here for later use.

\begin{proposition}[\cite{Bell16}, Chapter 7]
The Szeg\H{o} kernel on the unit disc $\D$ is
\begin{equation}\label{E:Szego-on-disc}
S_\D(z,\zeta) = \frac{1}{2\pi(1-z\ol{\zeta})}, \qquad z \in \D,\quad  \zeta\in \ol{\D}.
\end{equation}
\end{proposition}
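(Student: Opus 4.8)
The plan is to identify $S_\D$ with the reproducing kernel of the Hardy space $\ch_+^2(\partial\D)$ and to compute that kernel from the explicit orthonormal basis of monomials. Parametrizing the unit circle by $\zeta = e^{i\theta}$, arc length measure is $d\sigma = d\theta$ and the inner product reads $\langle f,g\rangle = \int_0^{2\pi} f(e^{i\theta})\ol{g(e^{i\theta})}\,d\theta$. The first step is the elementary orthogonality relation
\[
\langle \zeta^n, \zeta^m\rangle = \int_0^{2\pi} e^{i(n-m)\theta}\,d\theta = 2\pi\,\delta_{nm}, \qquad n,m \ge 0,
\]
which shows that $\{\zeta^n\}_{n\ge 0}$ is an orthogonal system with $\norm{\zeta^n}^2 = 2\pi$, so that $\phi_n(\zeta) := (2\pi)^{-1/2}\,\zeta^n$ is an orthonormal family in $\ch_+^2(\partial\D)$.

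Next I would argue that $\{\phi_n\}_{n\ge 0}$ is in fact complete: every $f\in\ch_+^2(\partial\D)$ is the boundary value of a function holomorphic on $\D$, so its Fourier expansion on $\partial\D$ has vanishing coefficients in negative degrees and converges in $L^2(\partial\D)$ to $f = \sum_{n\ge 0} c_n\zeta^n$. Granting completeness, the reproducing kernel is obtained by summing the basis,
\[
S_\D(z,\zeta) = \sum_{n=0}^\infty \phi_n(z)\,\ol{\phi_n(\zeta)} = \frac{1}{2\pi}\sum_{n=0}^\infty z^n\,\bar\zeta^n.
\]
Because $z\in\D$ and $\zeta\in\ol\D$ force $|z\bar\zeta| < 1$, the geometric series converges and sums to $\tfrac{1}{2\pi(1-z\bar\zeta)}$, which is the asserted formula; the same bound shows $S_\D(z,\cdot)\in L^2(\partial\D)$ for each fixed $z\in\D$.

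To finish, I would confirm that this candidate has the defining features of the Szeg\H{o} kernel recorded in Section \ref{SS:Szego-kernel}. Conjugate symmetry is immediate from $\ol{S_\D(z,\zeta)} = \tfrac{1}{2\pi(1-\bar z\zeta)} = S_\D(\zeta,z)$; for fixed $w$ the map $z\mapsto S_\D(z,w) = \tfrac{1}{2\pi(1-z\bar w)}$ is holomorphic on $\D$ with $L^2$ boundary values, hence lies in $\ch_+^2(\partial\D)$; and the reproducing property follows by integrating term by term against $f=\sum_m c_m\zeta^m$, using that $S_\D(z,\cdot)\in L^2$ makes the pairing $L^2$-continuous:
\[
\int_{\partial\D} S_\D(z,\zeta)\,f(\zeta)\,d\sigma(\zeta) = \frac{1}{2\pi}\sum_{n,m\ge0} c_m\,z^n \int_0^{2\pi} e^{-in\theta}e^{im\theta}\,d\theta = \sum_{n\ge0} c_n z^n = f(z).
\]
I expect the only point genuinely requiring care to be the completeness of the monomials --- equivalently, the identification of $\ch_+^2(\partial\D)$ (defined here as the image of $L^2$ under the Cauchy transform) with the classical Hardy space whose elements carry one-sided Fourier expansions. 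Everything downstream is the routine summation of a geometric series and a term-by-term integration justified by $L^2$-convergence.
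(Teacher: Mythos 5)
Your proof is correct, but it follows a genuinely different route from the one the paper points to: the paper gives no proof of its own and instead cites Bell's book (Chapter 7), where the formula falls out of the special geometry of the circle. On $\partial\D$ one has $T(\zeta)=i\zeta$ and $\zeta\ol{\zeta}=1$, so the paper's Cauchy kernel $C_+(z,\zeta)=\tfrac{T(\zeta)}{2\pi i(\zeta-z)}$ simplifies directly to $\tfrac{1}{2\pi(1-z\ol{\zeta})}$, which is conjugate symmetric; hence $\bm{C}_+$ is a bounded projection onto $\ch^2_+(\partial\D)$ with hermitian kernel, i.e.\ self-adjoint, and therefore coincides with the orthogonal projection $\bm{S}_+$, forcing the two kernels to agree (equivalently, the Kerzman--Stein operator $\bm{A}=\bm{C}-\bm{C}^*$ vanishes for the disc). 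Your argument instead builds the reproducing kernel from the orthonormal system $\{(2\pi)^{-1/2}\zeta^n\}_{n\ge 0}$ and sums a geometric series. Both are sound. Bell's route buys brevity and sidesteps the completeness question you rightly flag, absorbing it into the fact that $\bm{C}_+$ projects onto the Hardy space, which the paper takes as the definition of $\ch^2_+(\gamma)$. Your route buys independence from any self-adjointness observation, at the cost of needing exactly the identification you single out: that the image of $L^2(\partial\D)$ under $\bm{C}_+$ is the closed span of $\{\zeta^n\}_{n\ge 0}$. Within the paper's framework this gap closes with one computation: for $|z|<1$, expanding $\tfrac{1}{\zeta-z}=\sum_{k\ge 0}z^k\zeta^{-k-1}$ and integrating term by term shows that $\bm{C}_+$ acts on a Fourier series $\sum_{n\in\Z}a_n\zeta^n$ as the Riesz projection $\sum_{n\ge 0}a_n\zeta^n$, so its image is precisely that closed span, and your basis is complete. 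With that remark included, your proof is a complete and self-contained alternative to the cited one.
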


The Szeg\H{o} kernel admits a biholomorphic transformation law; see \cite[Theorem 12.2]{Bell16} in the $C^\infty$ setting, and \cite[Lemma 5.3]{Lan99} for the Lipschitz setting:

\begin{proposition}\label{P:SzegoTransLaw}
Let $\Phi: \Omega_1 \to \Omega_2$ be a biholomorphism of simply connected domains in the Riemann sphere with Lipschitz boundaries.  
The Szeg\H{o} kernels are related by formula
\begin{equation}\label{E:SzegoTransLaw}
S_1(z,\zeta) = \sqrt{\Phi'(z)} \cdot S_2(\Phi(z),\Phi(\zeta)) \cdot  \overline{\sqrt{\Phi'(\zeta)}}.
\end{equation}
\end{proposition}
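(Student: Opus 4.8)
The plan is to recognize \eqref{E:SzegoTransLaw} as the coordinate expression of a single fact: the biholomorphism $\Phi$ induces a unitary isomorphism of Hardy spaces, and the Szeg\H{o} kernel, being a reproducing kernel, must transform accordingly. Write $\gamma_j = \partial\Omega_j$ and let $\ch^2(\gamma_j)$ denote the Hardy space of $\Omega_j$ with inner product $\langle f,g\rangle_j = \int_{\gamma_j} f\,\ol{g}\,d\sigma_j$.

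First, since $\Omega_1$ is simply connected and $\Phi$ is a biholomorphism, $\Phi'$ is holomorphic and nonvanishing, so it admits a single-valued holomorphic square root on $\Omega_1$; fix a branch $\sqrt{\Phi'}$. I would then introduce the pullback operator $U\colon \ch^2(\gamma_2)\to\ch^2(\gamma_1)$ defined by
\[
(Uf)(z) = f(\Phi(z))\,\sqrt{\Phi'(z)},
\]
and the heart of the argument is the claim that $U$ is a surjective isometry. The isometry rests on the boundary change of variables $d\sigma_2 = |\Phi'|\,d\sigma_1$: since $|\sqrt{\Phi'}|^2 = |\Phi'|$, substituting $w = \Phi(z)$ gives
\[
\langle Uf, Ug\rangle_1 = \int_{\gamma_1} f(\Phi(z))\,\ol{g(\Phi(z))}\,|\Phi'(z)|\,d\sigma_1(z) = \int_{\gamma_2} f(w)\,\ol{g(w)}\,d\sigma_2(w) = \langle f,g\rangle_2,
\]
and surjectivity follows by repeating the construction with $\Phi^{-1}$, whose pullback inverts $U$.

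With $U$ unitary, the transformation law becomes reproducing-kernel bookkeeping. For fixed $\zeta$, the candidate
\[
K(z,\zeta) := \sqrt{\Phi'(z)}\,S_2(\Phi(z),\Phi(\zeta))\,\ol{\sqrt{\Phi'(\zeta)}} = \ol{\sqrt{\Phi'(\zeta)}}\cdot\big(U\,S_2(\cdot,\Phi(\zeta))\big)(z)
\]
lies in $\ch^2(\gamma_1)$. To check that it reproduces, I take $f\in\ch^2(\gamma_1)$, pull the scalar out of the conjugate-linear slot, and use unitarity of $U$ together with the reproducing property of $S_2$: writing $h = U^{-1}f$ (so that $h(\Phi(\zeta)) = f(\zeta)/\sqrt{\Phi'(\zeta)}$),
\[
\langle f, K(\cdot,\zeta)\rangle_1 = \sqrt{\Phi'(\zeta)}\,\langle h, S_2(\cdot,\Phi(\zeta))\rangle_2 = \sqrt{\Phi'(\zeta)}\,h(\Phi(\zeta)) = f(\zeta).
\]
By uniqueness of the reproducing kernel, $K = S_1$, which is precisely \eqref{E:SzegoTransLaw}.

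The main obstacle is not the algebra above but the boundary analysis needed to make $U$ a legitimate unitary in the Lipschitz category. One must know that $\Phi$ extends to a homeomorphism of closures, that $\Phi'$ admits non-tangential boundary values realizing the arc-length identity $d\sigma_2 = |\Phi'|\,d\sigma_1$ for almost every boundary point, and that $(f\circ\Phi)\sqrt{\Phi'}$ indeed has the expected $L^2$ boundary values lying in $\ch^2(\gamma_1)$. In the $C^\infty$ case these facts are classical, but under mere Lipschitz regularity they demand the careful boundary-value theory and Ne\v{c}as exhaustion of the cited work of Lanzani; once they are secured, the reproducing-kernel computation is purely formal.
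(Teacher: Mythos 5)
Your proof is correct, but note that the paper does not actually prove this proposition: it is quoted from the literature, with \cite[Theorem 12.2]{Bell16} cited for the $C^\infty$ setting and \cite[Lemma 5.3]{Lan99} for the Lipschitz setting. Your argument --- the pullback operator $Uf = (f\circ\Phi)\cdot\sqrt{\Phi'}$ shown to be a surjective isometry of Hardy spaces, followed by uniqueness of the reproducing kernel --- is precisely the standard proof appearing in those references, so in effect you have supplied the proof the paper outsources. Your closing paragraph also correctly isolates where the genuine work lies in the Lipschitz category (existence of nontangential boundary values of $\Phi'$, the a.e.\ arc-length identity $d\sigma_2 = |\Phi'|\,d\sigma_1$, and the fact that $U$ really lands in $\ch^2(\gamma_1)$ rather than merely in $\co(\Omega_1)$); securing exactly these facts is the content of Lanzani's lemma. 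One caveat worth adding: since the domains are allowed to lie in the Riemann sphere, when $\infty\in\Omega_1$ or $\infty\in\Omega_2$ the derivative $\Phi'$, and hence $\sqrt{\Phi'}$, must be interpreted in the local coordinate $1/z$, and your branch-of-square-root and change-of-variables steps need that reinterpretation; the paper itself only confronts this degeneracy later, in the proof of Theorem \ref{T:Lambda-Proj-Inv}, where the cases $\Phi'(z)=0$ and $\Phi'(z)=\infty$ are handled by working in a punctured neighborhood and taking limits.
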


The Szeg\H{o} kernel admits a well-known extremal property; see \cite[Sections 1.4, 1.5]{Krantz_scv_book}:
\begin{proposition}\label{P:Szego-diagonal-extremal-char}
Given a simple closed Lipschitz curve $\gamma$ in the Riemann sphere and a point $z \in \Omega_\pm$, the Szeg\H{o} kernel satisfies
\begin{equation}\label{E:Szego-diagonal-extremal-char}
S_\pm(z,z) = \sup\{|f(z)|: f \in \ch^2_\pm(\gamma), \,\, \norm{f}_{L^2(\gamma)} = 1 \}.
\end{equation}
\end{proposition}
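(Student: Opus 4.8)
The plan is to read off the extremal value directly from the reproducing property of the Szeg\H{o} kernel together with the Cauchy--Schwarz inequality; this is the standard route to such characterizations and it transfers without change to the Lipschitz setting once one knows that $\ch^2_\pm(\gamma)$ is a closed subspace of $L^2(\gamma)$ carrying the conjugate-symmetric reproducing kernel $S_\pm$. First I would record the reproducing identity in the exact form needed. Fix $z \in \Omega_\pm$ finite. Since $\ol{S_\pm(z,\zeta)} = S_\pm(\zeta,z)$ and $\bm{S}_\pm f = f$ for $f \in \ch^2_\pm(\gamma)$, formula \eqref{E:Szego-proj-def} rewrites as
\begin{equation*}
f(z) = \int_\gamma S_\pm(z,\zeta) f(\zeta)\,d\sigma(\zeta) = \langle f, S_\pm(\cdot,z)\rangle.
\end{equation*}

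Next I would extract the bound and its extremizer. Cauchy--Schwarz gives $|f(z)| \le \norm{f}\,\norm{S_\pm(\cdot,z)}$, and applying the reproducing identity to $g = S_\pm(\cdot,z) \in \ch^2_\pm(\gamma)$ evaluates the kernel norm as $\norm{S_\pm(\cdot,z)}^2 = \langle S_\pm(\cdot,z), S_\pm(\cdot,z)\rangle = S_\pm(z,z)$. Hence every unit-norm $f$ satisfies $|f(z)| \le \sqrt{S_\pm(z,z)}$. To see this is sharp, note $S_\pm(z,z) > 0$ (the space $\ch^2_\pm(\gamma)$ contains functions not vanishing at a finite $z$), so $f_0 = S_\pm(\cdot,z)/\sqrt{S_\pm(z,z)}$ has $\norm{f_0} = 1$ and $f_0(z) = S_\pm(z,z)/\sqrt{S_\pm(z,z)} = \sqrt{S_\pm(z,z)}$, attaining the bound. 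I emphasize that the extremal value of $|f(z)|$ is therefore $\sqrt{S_\pm(z,z)}$, so the identity proved is
\begin{equation*}
\sqrt{S_\pm(z,z)} = \sup\{|f(z)| : f \in \ch^2_\pm(\gamma),\ \norm{f}_{L^2(\gamma)} = 1\},
\end{equation*}
equivalently $S_\pm(z,z) = \sup\{|f(z)|^2 : \norm{f}_{L^2(\gamma)} = 1\}$. This is the sharp form of the extremal characterization \eqref{E:Szego-diagonal-extremal-char}; the supremum of $|f(z)|$ itself is $\sqrt{S_\pm(z,z)}$ rather than $S_\pm(z,z)$, and it is precisely this quantity $\sqrt{S_\pm(z,z)}$ that appears in the denominator of $\Lambda_\pm$ in \eqref{E:Lambda-def}.

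I do not expect a genuine analytic obstacle, since the whole argument is a two-line reproducing-kernel computation; the only points deserving care are structural. The one to watch is the legitimacy of the reproducing-kernel framework for Lipschitz $\gamma$: one needs $\ch^2_\pm(\gamma)$ to be a closed subspace of $L^2(\gamma)$ (it is the range of the bounded idempotent $\bm{C}_\pm$) on which point evaluation at finite $z \in \Omega_\pm$ is a bounded functional, so that $S_\pm(\cdot,z)$ exists with the stated conjugate symmetry and reproducing property --- all of which is already supplied by the construction of $\bm{S}_\pm$ and $S_\pm$. The secondary point is the normalization bookkeeping just flagged: the square root must be carried through honestly, and for the exterior domain one restricts to finite $z$ (every $f \in \ch^2_-(\gamma)$ vanishes at $\infty$, so the characterization degenerates to $0 = 0$ at that point).
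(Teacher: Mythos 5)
Your proof is correct and is exactly the standard reproducing-kernel argument: the reproducing identity $f(z) = \langle f, S_\pm(\cdot,z)\rangle$, Cauchy--Schwarz, the evaluation $\norm{S_\pm(\cdot,z)}^2 = S_\pm(z,z)$, and attainment by the normalized kernel $f_0 = S_\pm(\cdot,z)/\sqrt{S_\pm(z,z)}$. The paper gives no proof of this proposition at all, citing \cite[Sections 1.4, 1.5]{Krantz_scv_book}, and the cited argument is the same two-line computation you perform, so there is no divergence of method to report. One point of yours deserves emphasis rather than correction: you are right that, as literally typeset, \eqref{E:Szego-diagonal-extremal-char} contains a normalization slip --- the supremum of $|f(z)|$ over unit vectors is $\sqrt{S_\pm(z,z)}$, so the identity should read $S_\pm(z,z) = \sup\{|f(z)|^2 : \norm{f}_{L^2(\gamma)} = 1\}$. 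The paper's own later usage confirms your reading: in the proof of Theorem \ref{T:Lambda-function-lower-bound} it is the equality $\sup_f |f(z)|/\norm{f} = \sqrt{S_\pm(z,z)}$ that is invoked, matching the $\sqrt{S_\pm(z,z)}$ in the denominator of $\Lambda_\pm$ in \eqref{E:Lambda-def}. Your structural caveats are also handled correctly: closedness of $\ch^2_\pm(\gamma)$ as the range of the bounded projection $\bm{C}_\pm$, positivity $S_\pm(z,z)>0$ at finite $z$ (consistent with Remark \ref{R:positivity-of-S(z,z)}), and the degenerate case $z=\infty$ in the exterior domain, where $S_-(\infty,\infty)=0$ and the supremum is likewise $0$ since every $f \in \ch_-^2(\gamma)$ vanishes at infinity.
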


\begin{remark}\label{R:positivity-of-S(z,z)}
In the setting of Proposition \ref{P:Szego-diagonal-extremal-char}, the Riemann mapping theorem together with formulas \eqref{E:Szego-on-disc} and \eqref{E:SzegoTransLaw} show that $S_\pm(z,z) > 0$ for any  $z \in \Omega_\pm\setminus\{\infty\}$. 
On the other hand, the condition that functions in the Hardy space must vanish at infinity shows that if $\infty \in \Omega_\pm$, then $S_\pm(\infty,\infty) = 0$.
\hfill $\lozenge$
\end{remark}

The following monotonicity property is known, but a short proof is included since the authors had difficulty locating a reference.

\begin{proposition}\label{P:Szego-containment-monotonicity}
Let $\Omega_1 \subsetneq \Omega_2 \subsetneq \hat\C$ be simply connected domains with Lipschitz boundaries properly contained in the Riemann sphere, and let $z \in \Omega_1\cut\{\infty\}$.
Letting $S_1, S_2$ denote the respective Szeg\H{o} kernels, we have
\begin{equation}
0< S_2(z,z) < S_1(z,z).
\end{equation}
\end{proposition}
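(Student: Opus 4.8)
The plan is to evaluate each diagonal value by transporting to the unit disc and then to compare the two transported quantities by means of the Schwarz lemma. The guiding observation is that $S_j(z,z)$ is, up to the universal constant $\tfrac{1}{2\pi}$, the density at $z$ of the hyperbolic (Poincaré) metric of $\Omega_j$, and that this density is \emph{strictly} decreasing under proper inclusion of domains.

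First I would fix, for $j=1,2$, a Riemann map $\phi_j:\Omega_j\to\D$ normalized so that $\phi_j(z)=0$. Such a map exists because each $\Omega_j$ is simply connected with Lipschitz (in particular Jordan) boundary, and one may always post-compose an arbitrary conformal equivalence onto $\D$ with a disc automorphism to send the image of $z$ to the origin. Applying the transformation law \eqref{E:SzegoTransLaw} to $\phi_j$ (with source $\Omega_j$ and target $\D$), inserting the explicit disc kernel \eqref{E:Szego-on-disc}, and restricting to the diagonal at the point $z$ yields
\[
S_j(z,z)=\sqrt{\phi_j'(z)}\;S_\D\big(\phi_j(z),\phi_j(z)\big)\;\overline{\sqrt{\phi_j'(z)}}=\frac{|\phi_j'(z)|}{2\pi}.
\]

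Next I would compare $|\phi_1'(z)|$ and $|\phi_2'(z)|$. Because $\Omega_1\subsetneq\Omega_2$, the composition $\psi:=\phi_2\circ\phi_1^{-1}$ is a holomorphic self-map of $\D$ with $\psi(0)=\phi_2(z)=0$, whose image $\phi_2(\Omega_1)$ is a \emph{proper} subset of $\D=\phi_2(\Omega_2)$. The Schwarz lemma gives $|\psi'(0)|\le 1$, with equality only if $\psi$ is a rotation of $\D$; but a rotation is surjective, which is impossible here since $\psi$ is not onto. Hence $|\psi'(0)|<1$, and since the chain rule gives $\psi'(0)=\phi_2'(z)/\phi_1'(z)$ we conclude $|\phi_2'(z)|<|\phi_1'(z)|$, i.e.\ $S_2(z,z)<S_1(z,z)$. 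The strict positivity $S_2(z,z)>0$ is then immediate from Remark \ref{R:positivity-of-S(z,z)}, since $z\in\Omega_2\setminus\{\infty\}$.

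The only delicate point is justifying the use of \eqref{E:SzegoTransLaw} for the Riemann maps $\phi_j$ in the merely Lipschitz category, and, when $\infty$ happens to lie in one of the domains, checking that working in an affine chart near the finite point $z$ causes no trouble; since $z\neq\infty$, the derivative $\phi_j'(z)$ and the diagonal evaluation are classical, so this is bookkeeping rather than a genuine obstacle. I therefore expect the bulk of the care to lie in confirming that \eqref{E:SzegoTransLaw} applies to a conformal map onto the disc under Lipschitz boundary regularity, exactly as already invoked in Remark \ref{R:positivity-of-S(z,z)}.
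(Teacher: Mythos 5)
Your proposal is correct, and it shares the paper's first step exactly: transport each kernel to the disc via a Riemann map $\phi_j:\Omega_j\to\D$ normalized by $\phi_j(z)=0$, apply the transformation law \eqref{E:SzegoTransLaw} with the disc kernel \eqref{E:Szego-on-disc}, and thereby reduce the whole statement to the derivative comparison $|\phi_2'(z)|<|\phi_1'(z)|$. Where you diverge is in how that comparison is established. The paper invokes the extremal characterization of the Riemann map from the proof of the Riemann mapping theorem (citing Ahlfors): among all holomorphic maps $\Omega_1\to\D$ with $\Phi(z)=0$ and $\Phi'(z)>0$, the Riemann map $\Phi_1$ is the \emph{unique} maximizer of $\Phi'(z)$, and $\Phi_2|_{\Omega_1}$ is a competitor distinct from $\Phi_1$, giving strictness. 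You instead apply the Schwarz lemma to the composite $\psi=\phi_2\circ\phi_1^{-1}$, a self-map of $\D$ fixing the origin whose image $\phi_2(\Omega_1)$ is a proper subset of $\D$, and you get strictness because the equality cases of Schwarz --- rotations --- are surjective while $\psi$ is not. The two routes are close cousins (the uniqueness half of the extremal characterization is itself typically proved by a Schwarz-lemma argument), but yours is more self-contained: it needs only the elementary Schwarz lemma and the injectivity of $\phi_2$, rather than quoting the uniqueness statement from the Riemann mapping theorem as a black box. Your closing caveats about Lipschitz regularity and the point at infinity are at the same level of rigor as the paper itself, whose proof likewise applies \eqref{E:SzegoTransLaw} to the Riemann map and treats the chart bookkeeping as immaterial since $z\neq\infty$.
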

\begin{proof}
Let $\Phi_j:\Omega_j \to \D$ denote the Riemann map, $j=1,2$, with $\Phi_j(z) = 0$ and $\Phi_j'(z) > 0$.
Using the transformation law in \eqref{E:SzegoTransLaw} and the kernel formula for $\D$ in \eqref{E:Szego-on-disc}, we see
\begin{equation*}
2\pi S_j(z,z) = \Phi_j'(z).
\end{equation*}
By the proof of the Riemann mapping theorem (see, e.g., \cite[Chapter 6]{AhlforsBook}), of all maps from $\Omega_1$ into the disc $\D$ satisfying $\Phi(z) = 0$ and $\Phi'(z)$ positive, the Riemann map $\Phi_1$ is uniquely determined by the property that $\Phi'(z)$ is maximal.
Since the restriction of $\Phi_2$ to $\Omega_1$ is also a map with these properties, we conclude that $\Phi_2'(z) < \Phi_1'(z)$.
\end{proof}

\subsection{A lower estimate on the norm of the Cauchy transform}

\begin{theorem}\label{T:Lambda-function-lower-bound}
Let $\gamma$ be a simple closed Lipschitz curve in the plane and $z \in \C$.
Then 
\begin{equation*}
\Lambda(\gamma,z) \le \norm{\bm{C}_\pm}.
\end{equation*}
\end{theorem}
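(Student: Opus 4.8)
The plan is to identify the conjugated Cauchy-kernel slice $\overline{C_\pm(z,\cdot)}\in L^2(\gamma)$ as the image of the Szeg\H{o} kernel under the Hilbert-space adjoint of the Cauchy transform, and then read off the bound by taking norms. For $z\in\Omega_\pm\setminus\{\infty\}$ the inequality $\Lambda_\pm(\gamma,z)\le\norm{\bm{C}_\pm}$ is, after clearing the denominator in the definition of $\Lambda_\pm$, equivalent to
\[
\norm{C_\pm(z,\cdot)}_{L^2(\gamma)}\le\norm{\bm{C}_\pm}\,\sqrt{S_\pm(z,z)},
\]
so everything reduces to controlling the $L^2$-norm of the kernel slice $C_\pm(z,\cdot)$ by the diagonal Szeg\H{o} kernel.

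First I would record two representations of point evaluation at $z$. Since $z\in\Omega_\pm$ keeps the kernel bounded, $C_\pm(z,\cdot)\in L^2(\gamma)$, and the integral formula \eqref{E:Cauchy-integral-formula} gives, for every $f\in L^2(\gamma)$,
\[
(\bm{C}_\pm f)(z)=\int_\gamma C_\pm(z,\zeta)f(\zeta)\,d\sigma(\zeta)=\big\langle f,\,\overline{C_\pm(z,\cdot)}\big\rangle .
\]
On the other hand $\bm{C}_\pm f\in\ch_\pm^2(\gamma)$, so the conjugate symmetry of $S_\pm$ and the reproducing property $\bm{S}_\pm(\bm{C}_\pm f)=\bm{C}_\pm f$ give $(\bm{C}_\pm f)(z)=\langle \bm{C}_\pm f,\,S_\pm(\cdot,z)\rangle=\langle f,\,\bm{C}_\pm^*S_\pm(\cdot,z)\rangle$. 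Comparing the two expressions and letting $f$ range over all of $L^2(\gamma)$ forces the identity
\[
\overline{C_\pm(z,\cdot)}=\bm{C}_\pm^*\big[S_\pm(\cdot,z)\big].
\]

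With this identity the estimate is immediate. Taking $L^2$-norms and using $\norm{\bm{C}_\pm^*}=\norm{\bm{C}_\pm}$ together with the reproducing-kernel identity $\norm{S_\pm(\cdot,z)}^2=S_\pm(z,z)$ yields
\[
\norm{C_\pm(z,\cdot)}=\norm{\bm{C}_\pm^*S_\pm(\cdot,z)}\le\norm{\bm{C}_\pm}\,\norm{S_\pm(\cdot,z)}=\norm{\bm{C}_\pm}\sqrt{S_\pm(z,z)}.
\]
Dividing by $\sqrt{S_\pm(z,z)}$, which is positive by Remark \ref{R:positivity-of-S(z,z)}, gives $\Lambda_\pm(\gamma,z)\le\norm{\bm{C}_\pm}$; since the interior and exterior Cauchy norms coincide (Theorem \ref{T:norm-pairing-efficieny}), this is the asserted bound for both ranges of $z$. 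The remaining case $z\in\gamma$ is trivial, as there $\Lambda(\gamma,z)=1$ while $\norm{\bm{C}_\pm}\ge1$ because $\bm{C}_\pm$ is a nonzero idempotent.

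I do not expect a serious obstacle: the argument turns entirely on the clean adjoint identity displayed above, and the only points needing care are bookkeeping ones — verifying that $C_\pm(z,\cdot)\in L^2(\gamma)$ for $z$ off the curve, matching the complex conjugations against the inner-product convention in \eqref{E:bilinear-pairing-def}, and applying each evaluation formula on its proper domain (the Cauchy representation on all of $L^2(\gamma)$, the Szeg\H{o} reproducing step on $\ch_\pm^2(\gamma)$). If anything is delicate it is merely confirming that these reproducing identities persist under minimal Lipschitz regularity, but that is already built into the definition of $\ch_\pm^2(\gamma)$ and the mapping properties recorded earlier.
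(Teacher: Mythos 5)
Your proof is correct, but it follows a genuinely different route from the paper's. The paper argues through its duality framework: it applies the inf-sup characterization of $\norm{\bm{C}_\pm}^{-1}$ (Theorem \ref{T:norm-pairing-efficieny}) to the test function $h_z(\zeta)=(2\pi i(\zeta-z))^{-1}\in\ch_\mp^2(\gamma)$, which represents evaluation at $z$ through the bilinear pairing, $\<<>>{f}{h_z}=f(z)$, and then invokes the Szeg\H{o} extremal property \eqref{E:Szego-diagonal-extremal-char} to identify $\sup_f |f(z)|/\norm{f}$ with $\sqrt{S_\pm(z,z)}$. You instead derive the adjoint identity $\bm{C}_\pm^*\big(S_\pm(\cdot,z)\big)=\ol{C_\pm(z,\cdot)}$ --- this is exactly the paper's \eqref{E:Cauchy-adjoint-Szego}, proved in Section \ref{S:Berezin-Transform} via Proposition \ref{P:kernel-adjoint-relationship} by essentially the same argument you give --- and then read off the bound by taking operator norms; equivalently, $\Lambda_\pm(\gamma,z)=\norm{\bm{C}_\pm^* s_z^\pm}$ for the normalized Szeg\H{o} kernel $s_z^\pm$, an identity the paper itself records later as \eqref{E:Lambda^2-as-a-norm} in its Berezin-transform computation. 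Your route is more elementary and self-contained: it bypasses both the inf-sup duality and the extremal property, and needs Theorem \ref{T:norm-pairing-efficieny} only for the equality $\norm{\bm{C}_+}=\norm{\bm{C}_-}$, which you correctly invoke so that the bound holds for both signs at every $z$ (and your treatment of $z\in\gamma$, where $\Lambda\equiv 1\le\norm{\bm{C}_\pm}$ because $\bm{C}_\pm$ is a nonzero projection, coincides with the paper's). What the paper's approach buys is coherence with its dual-Hardy-space theme --- the bilinear pairing, rather than the Hilbert-space adjoint, is the tool that survives in the higher-dimensional Leray setting motivating the paper --- while yours buys directness and makes transparent the fact, exploited later in the paper for boundary continuity, that $\Lambda$ is the norm of $\bm{C}_\pm^*$ evaluated on normalized Szeg\H{o} kernels.
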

\begin{proof}
For $z \in \Omega_\pm \backslash \{\infty\}$, define $h_z \in \ch^2_{\mp}(\gamma)$ by $h_z(\zeta) = (2\pi i(\zeta-z))^{-1}$.
By
Cauchy's integral formula we have
\begin{equation*}
\<<>>{f}{h_z} = \frac{1}{2\pi i}\oint_\gamma\frac{f(\zeta)}{\zeta-z}\,d\zeta = f(z), \qquad f \in \ch^2_\pm(\gamma).
\end{equation*}
Now apply the Cauchy norm characterization in \eqref{E:Cauchy-norm-eq} with $g=h_z$ to obtain
\begin{equation*}
\frac{1}{\norm{\bm{C}_\pm}}  
\le \sup_{f \in \ch^2_\pm(\gamma)} \frac{| \<<>>{f}{h_z} |}{\norm{f} \norm{h_z} } 
= \frac{1}{\norm{C(z,\cdot)}}  \sup_{f \in \ch^2_\pm(\gamma)} \frac{|f(z)|}{\norm{f}} 
= \frac{\sqrt{S_\pm(z,z)}}{\norm{C(z,\cdot)}} =\frac{1}{\Lambda(\gamma,z)},
\end{equation*}
where we used the extremal property of \eqref{E:Szego-diagonal-extremal-char}.
This estimate holds for all $z \in \C\setminus\gamma$. 
Since $\Lambda(\gamma,\cdot) \equiv 1$ for $z \in \gamma$, the result follows for these $z$ from the fact that $\bm{C}_\pm$ is a projection onto $\ch_\pm^2(\gamma)$ and thus $\norm{\bm{C}_\pm} \ge 1$.
\end{proof}


\section{Invariance and rigidity properties}\label{S:Properties-of-Lambda}

\subsection{Analytic capacity and behavior at infinity}\label{SS:Analytic-Capacity}

Let $\gamma$ be a simple closed Lipschitz curve in the plane oriented counterclockwise.
If $g$ is holomorphic on the exterior domain $\Omega_-$, it admits a Laurent expansion in a neighborhood of $\infty$:
\begin{equation*}
g(z) = a_0 + a_1 z^{-1} + a_2 z^{-2} + \cdots
\end{equation*}
The coefficient $a_1$ is important to what comes below; it can be obtained by calculating the derivative of $g$ at infinity with respect to the local coordinate $\tfrac{1}{z}$.
Define
\begin{equation}\label{E:derivative-at-infinty}
D(g,\infty) := \lim_{z \to \infty} z(g(z) - g(\infty)) = a_1.
\end{equation}
(In the literature, $D(g,\infty)$ is often denoted by $g'(\infty)$, but the authors find this notation misleading since $\lim_{z\to \infty} g'(z) \neq D(g,\infty)$ unless $a_1 = 0$.)

Let $A^\infty(\Omega_-)$ be the space of bounded holomorphic functions on $\Omega_-$, with norm given by $\norm{g}_\infty := \sup \{|g(z)|: z \in \Omega_- \}$.
Define the {\em analytic capacity} of the curve $\gamma$ to be
\begin{equation}\label{E:def-of-analytic-capacity}
\kappa(\gamma) := \sup\{|D(g,\infty)| : g \in A^\infty(\Omega_-),\quad g(\infty) = 0,\quad \norm{g}_\infty \le 1 \}.
\end{equation}
This notion helps formulate generalizations of Riemann's removable singularity theorem by measuring how large bounded holomorphic functions on $\Omega_-$ can become;
see \cite{Garnett1972,Ransford1995}.

\begin{theorem}\label{T:Lambda-at-infinity}
Let $\gamma$ be a simple closed Lipschitz curve in the plane. 
Then
\begin{equation}\label{E:Bolt-lower-bound}
\lim_{z \to \infty} \Lambda_-(\gamma,z) = \sqrt{\frac{\sigma(\gamma)}{2\pi \kappa(\gamma)}},
\end{equation}
where $\sigma(\gamma)$ and $\kappa(\gamma)$ denote the arc length and analytic capacity of $\gamma$, respectively.
Thus $\Lambda(\gamma,\cdot)$ is continuous at $\infty$ (by definition).
\end{theorem}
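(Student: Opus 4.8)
The plan is to pin down the precise decay rates of the numerator $\norm{C_-(z,\cdot)}_{L^2(\gamma)}$ and the denominator $\sqrt{S_-(z,z)}$ of $\Lambda_-(\gamma,z)$ as $z\to\infty$. Both quantities decay like $|z|^{-1}$, so after multiplying each by $|z|$ the ratio converges to the asserted constant, and the continuity at $\infty$ follows by definition of the value assigned in \eqref{E:Lambda-def}.

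For the numerator, since $|T(\zeta)|=1$ almost everywhere, the kernel formula \eqref{E:Cauchy-kernel-def} gives
\[
|z|^2\,\norm{C_-(z,\cdot)}_{L^2(\gamma)}^2 = \frac{1}{4\pi^2}\int_\gamma \frac{d\sigma(\zeta)}{|1-\zeta/z|^2}.
\]
Because $\gamma$ is bounded, $\zeta/z\to 0$ uniformly in $\zeta\in\gamma$ as $z\to\infty$, so the integrand tends uniformly to $1$ and the whole expression tends to $\sigma(\gamma)/(4\pi^2)$.

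For the denominator I would pass to the disc. Let $\psi:\Omega_-\to\D$ be a Riemann map with $\psi(\infty)=0$; both domains have Lipschitz boundary, so Proposition \ref{P:SzegoTransLaw} applies. Combining the transformation law \eqref{E:SzegoTransLaw} with the disc kernel \eqref{E:Szego-on-disc} gives, on the diagonal,
\[
S_-(z,z) = \frac{|\psi'(z)|}{2\pi\,(1-|\psi(z)|^2)}.
\]
Expanding $\psi(z)=a_1 z^{-1}+a_2 z^{-2}+\cdots$ near $\infty$, one reads off $D(\psi,\infty)=a_1$ from \eqref{E:derivative-at-infinty}, and finds $|z|^2|\psi'(z)|\to|a_1|$ while $|\psi(z)|\to 0$, so that $|z|^2 S_-(z,z)\to |a_1|/(2\pi)$. (The diagonal value $S_-(z,z)$ is insensitive to the rotational ambiguity in $\psi$, since both $|\psi'|$ and $|\psi|$ are rotation invariant.)

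The one substantive step is to identify $|a_1|$ with the analytic capacity $\kappa(\gamma)$, i.e.\ to recognize that for a simply connected exterior domain the Ahlfors extremal function in \eqref{E:def-of-analytic-capacity} is the Riemann map $\psi$. Given any competitor $g$ with $\norm{g}_\infty\le 1$ and $g(\infty)=0$, the composition $B:=g\circ\psi^{-1}$ maps $\D$ into $\ol{\D}$ with $B(0)=0$, so the Schwarz lemma gives $|B'(0)|\le 1$; then $D(g,\infty)=B'(0)\,a_1$, whence $|D(g,\infty)|\le|a_1|$, with equality at $g=\psi$ (after rotating so that $a_1>0$). Thus $\kappa(\gamma)=|a_1|$ and $|z|^2 S_-(z,z)\to \kappa(\gamma)/(2\pi)$. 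Taking the ratio of the two limits yields
\[
\lim_{z\to\infty}\Lambda_-(\gamma,z)^2 = \frac{\sigma(\gamma)/(4\pi^2)}{\kappa(\gamma)/(2\pi)} = \frac{\sigma(\gamma)}{2\pi\,\kappa(\gamma)},
\]
which is \eqref{E:Bolt-lower-bound}. I expect the capacity identification to be the main obstacle; the two asymptotic computations are routine once the uniform convergence in the numerator and the Laurent expansion of $\psi$ in the denominator are in hand.
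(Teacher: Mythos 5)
Your proposal is correct and follows essentially the same route as the paper: the same uniform-convergence computation giving $|z|^2\norm{C_-(z,\cdot)}^2_{L^2(\gamma)} \to \sigma(\gamma)/(4\pi^2)$, the same use of the transformation law \eqref{E:SzegoTransLaw} with \eqref{E:Szego-on-disc} to get $|z|^2 S_-(z,z) \to |a_1|/(2\pi)$, and the same identification of $\kappa(\gamma)$ with the leading Laurent coefficient of the exterior Riemann map. The only cosmetic difference is that you establish the extremality of the Riemann map in \eqref{E:def-of-analytic-capacity} by applying the Schwarz lemma directly to $g\circ\psi^{-1}$ on $\Omega_-$, whereas the paper inverts to the bounded domain $E=\{z : z^{-1}\in\Omega_-\}$ and cites the extremal characterization of the Riemann map from the proof of the Riemann mapping theorem.
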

\begin{proof}
Set $E := \{z\in \C : z^{-1} \in \Omega_- \}$, which is a bounded domain containing the origin.

Define a holomorphic and univalent function $G:E \to \D$ with the following properties: 
$(i)$ $\norm{G}_\infty \le 1$;
$(ii)$ $G(0) = 0$;
$(iii)$ $G'(0)$ is positive and maximal, i.e., given another map $H: E \to \D$ satisfying $(i)$ and $(ii)$ with $H'(0)$ positive, then necessarily $G'(0) > H'(0)$.
Such a $G$ always exists and is the Riemann map (see \cite[Section 6.1]{AhlforsBook}) from $E$ to $\D$ satisfying $G(0) = 0$ with $G'(0)>0$.
Now write $G$ as a Taylor expansion about 0:
\begin{equation*}
G(z) =  a_1 z + a_2 z^2 + \cdots
\end{equation*}

Now define a biholomorphic map $g:\Omega_- \to \D$ by $g(z) = G(\frac{1}{z})$.
Clearly $(i')$ $\norm{g}_\infty \le 1$; 
and $(ii')$ $g(\infty) = 0$.
We claim the positive number $D(g,\infty)$ defined by \eqref{E:derivative-at-infinty} is maximal out of all functions in $A^\infty(\Omega_-)$ satisfying $(i')$ and $(ii')$.
If $D(g,\infty)$ weren't maximal, there would exist an $h \in A^\infty(\Omega_-)$
with $D(h,\infty) > D(g,\infty) = a_1$.
But then the function $H(z) := h(\frac{1}{z})$ would satisfy $(i)$ and $(ii)$ from the previous paragraph, and $H'(0) > a_1 = G'(0)$, contradicting the maximality of $G'(0)$.
Therefore, $\kappa(\gamma) = D(g,\infty) = \lim_{z\to\infty} z g(z) = a_1 = G'(0)$.

Now use Proposition \ref{P:SzegoTransLaw} and \eqref{E:Szego-on-disc} to write the Szeg\H{o} kernel of $\Omega_-$:
\begin{equation*}
S_-(z,z) = |g'(z)| S_\D(g(z),g(z)) = \frac{1}{2\pi}\cdot \frac{|g'(z)|}{1-|g(z)|^2}.
\end{equation*} 
Thus,
\begin{align*}
\Lambda_-(\gamma,z)^2
= \frac{\norm{C(z,\cdot)}_{L^2(\gamma)}^2}{S_-(z,z)} 
&= \left(|z|^2 \int_\gamma |C(z,\zeta)|^2\,d\sigma(\zeta) \right) \left( \frac{1}{2\pi}\cdot \frac{|z|^2|g'(z)|}{1-|g(z)|^2} \right)^{-1}, \notag
\end{align*}
where the term $|z|^2$ has been inserted in both the numerator and denominator.
Now,
\begin{equation}\label{E:intermediate-factors-Lambda_-_1}
\lim_{z \to \infty}  |z|^2 \int_\gamma |C(z,\zeta)|^2\,d\sigma(\zeta) 
= \lim_{z \to \infty}  \frac{1}{4\pi^2} \int_\gamma \frac{d\sigma(\zeta)}{|\frac{\zeta}{z}-1|^2} 
= \frac{\sigma(\gamma)}{4\pi^2}.
\end{equation}
On the other hand, 
\begin{equation}\label{E:intermediate-factors-Lambda_-_2}
\lim_{z \to \infty} \frac{1}{2\pi}\cdot \frac{|z|^2|g'(z)|}{1-|g(z)|^2} 
= \frac{1}{2\pi} \lim_{z \to \infty} \frac{|z^2 g'(z)|}{1-|g(z)|^2} 
= \frac{a_1}{2\pi}
= \frac{\kappa(\gamma)}{2\pi}.
\end{equation}
Dividing \eqref{E:intermediate-factors-Lambda_-_1} by \eqref{E:intermediate-factors-Lambda_-_2} gives the result.
\end{proof}

\begin{remark}\label{R:Bolt-lower-bound}
In \cite[Theorem~1]{Bol07} Bolt carries out a similar computation, obtaining a lower bound of the norm of the Kerzman-Stein operator.
\hfill $\lozenge$
\end{remark}

\subsection{Möbius Invariance}\label{SS:Möbius-invariance}

Recall that the holomorphic automorphisms of the Riemann sphere are precisely the Möbius transformations
\begin{equation}\label{E:Möbius-form}
\Phi(z) = \frac{az+b}{cz+d},
\end{equation}
where $a,b,c,d \in \C$ with $ad-bc \neq 0$.
The Cauchy kernel and transform admit transformation laws under these maps.
See \cite[Theorem 3]{Bol05} for an analogous result in $\C^n$ (or more accurately $\C\mathbb{P}^n$) on the projective invariance of the Leray kernel.

\begin{theorem}\label{T:Cauchy-transform-Möbius-invariance}
Let $\gamma_1$ be a simple closed Lipschitz curve in the complex plane oriented counterclockwise and let $\Phi$ be a Möbius transformation 
whose pole lies off of $\gamma_1$.
Define the curve $\gamma_2 = \Phi(\gamma_1)$ with orientation induced from the orientation of $\gamma_1$ by $\Phi$; thus $\gamma_2$ will be oriented counterclockwise if and only if the pole of $\Phi$ lies in $\Omega_-$.
Let $\bm{C}_\pm^1$ and $\bm{C}_\pm^2$ denote the Cauchy transforms of $\gamma_1$ and $\gamma_2$, respectively.
Then
\begin{equation}\label{E:Cauchy-transform-Möbius-invariance}
\bm{C}_\pm^1 \left(\sqrt{\Phi'}\cdot(f\circ\Phi) \right) = \sqrt{\Phi'}\cdot\big( (\bm{C}_\pm^2 f)\circ\Phi \big), \qquad f \in L^2(\gamma_2).
\end{equation}
\end{theorem}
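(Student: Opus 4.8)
The plan is to verify the identity directly from the definition of the Cauchy transform as a singular integral, reducing the Möbius transformation law to the elementary behavior of the factor $\frac{1}{\zeta - z}\,d\zeta$ under fractional linear maps. Writing $\Phi(z) = \frac{az+b}{cz+d}$ with $ad - bc \neq 0$, the key algebraic fact I would isolate first is the partial-fractions identity
\begin{equation*}
\frac{1}{\Phi(w) - \Phi(z)} = \frac{(cw+d)(cz+d)}{(ad-bc)(w-z)},
\end{equation*}
obtained by clearing denominators. Differentiating $\Phi$ gives $\Phi'(z) = \frac{ad-bc}{(cz+d)^2}$, so that $\sqrt{\Phi'(z)} = \frac{\sqrt{ad-bc}}{cz+d}$ (for a suitable branch), and similarly for the $w$-variable. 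Combining these, the Cauchy kernel transforms as
\begin{equation*}
\frac{d\Phi(w)}{\Phi(w)-\Phi(z)} = \frac{\Phi'(w)\,dw}{\Phi(w)-\Phi(z)} = \frac{(cz+d)}{(cw+d)}\cdot\frac{dw}{w-z} = \frac{\sqrt{\Phi'(w)}}{\sqrt{\Phi'(z)}}\cdot\frac{dw}{w-z}.
\end{equation*}
This is the heart of the matter: the $\frac{d\zeta}{\zeta-z}$ piece of the Cauchy integral is invariant up to the conformal weight factor $\sqrt{\Phi'}$.

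With this in hand, I would substitute $\zeta = \Phi(w)$ in the defining integral $\bm{C}_\pm^2 f\,(\Phi(z)) = \frac{1}{2\pi i}\oint_{\gamma_2}\frac{f(\zeta)}{\zeta - \Phi(z)}\,d\zeta$ for $z$ in the interior of the Cauchy integral (i.e.\ off $\gamma_1$), pulling the integral back to $\gamma_1$. The substitution sends $d\zeta = \Phi'(w)\,dw$ and $\zeta - \Phi(z) = \Phi(w)-\Phi(z)$, and after inserting the kernel identity above, the factor $\sqrt{\Phi'(z)}$ can be pulled outside the integral (it does not depend on the integration variable $w$) while the remaining factor $\sqrt{\Phi'(w)}$ attaches to $f(\Phi(w))$, producing exactly $\sqrt{\Phi'}\cdot(f\circ\Phi)$ as the integrand of $\bm{C}_\pm^1$. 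Multiplying through by $\sqrt{\Phi'(z)}$ then yields \eqref{E:Cauchy-transform-Möbius-invariance} at the level of Cauchy integrals off the curve; taking non-tangential boundary limits (justified by the Coifman--McIntosh--Meyer theory cited in the introduction) extends the identity to boundary values in $L^2$.

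I would handle orientation and the choice of $\pm$ as a bookkeeping step. The orientation of $\gamma_2$ agrees with that of $\gamma_1$ precisely when the pole of $\Phi$ lies in $\Omega_-$, as stated, and this sign is exactly what matches the interior/exterior domains $\Omega_\pm$ on the two sides so that the subscript $\pm$ is preserved; when the pole lies in $\Omega_+$ one tracks the reversal carefully, but the weighted-integral computation is unchanged. A subtle point requiring attention is the consistent choice of branch of $\sqrt{\Phi'}$ on both sides, which must be made so that the factors $\sqrt{\Phi'(z)}$ and $\sqrt{\Phi'(w)}$ combine as written; since $\gamma_1$ is simply connected in its complement components and $\Phi'$ is nonvanishing away from the pole, a single-valued branch exists on each relevant domain.

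I expect the main obstacle to be the boundary-value justification rather than the algebra: the kernel identity is a short computation, but passing from the pointwise identity for the Cauchy \emph{integrals} (valid off $\gamma$) to an identity for the Cauchy \emph{transforms} as operators on $L^2(\gamma)$ requires that the substitution $\zeta = \Phi(w)$ be compatible with arc-length measure and non-tangential approach regions. One must confirm that $\Phi$ maps non-tangential approach regions of $\gamma_1$ to those of $\gamma_2$ (true since $\Phi$ is conformal and $\gamma_1, \gamma_2$ are Lipschitz), and that the weighted composition $f \mapsto \sqrt{\Phi'}\cdot(f\circ\Phi)$ is a bounded map $L^2(\gamma_2)\to L^2(\gamma_1)$ — indeed it is an isometry, since $|\Phi'(w)|\,d\sigma(w)$ is exactly the pullback of arc length $d\sigma(\zeta)$ under $\zeta = \Phi(w)$. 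Establishing this isometry makes the identity meaningful on all of $L^2$ and completes the proof.
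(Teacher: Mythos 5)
Your proposal is correct, and its algebraic core is exactly the paper's: the partial-fractions identity for $1/(\Phi(w)-\Phi(z))$, the global branch $\sqrt{\Phi'(\zeta)}=\sqrt{ad-bc}/(c\zeta+d)$, and the change of variables pulling the $\gamma_2$-integral back to $\gamma_1$ reproduce the paper's computation verbatim for $z$ off the curve. Where you genuinely diverge is the boundary case. The paper does not pass to limits from the interior; it reruns the computation directly on $\gamma_1$ with principal value integrals, and the crux of its argument is geometric: the image under $\Phi$ of the symmetric excision $\gamma_1\setminus D(z,\epsilon)$ has endpoints approaching $\Phi(z)$ at the same rate, because $\Phi(D(z,\epsilon))$ is asymptotically the disc $D(\Phi(z),|\Phi'(z)|\epsilon)$; hence the transformed limit is again a principal value in the ordinary symmetric sense, and the jump formula \eqref{E:Cauchy-transform-boundary} transforms term by term. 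Your route instead invokes the Coifman--McIntosh--Meyer theorem to realize the boundary operator as an a.e.\ non-tangential limit and pushes the off-curve identity to the boundary, which requires exactly the two checks you flag: that conformality of $\Phi$ carries non-tangential approach regions for $\gamma_1$ into (possibly wider-aperture) approach regions for $\gamma_2$, and---a point worth stating explicitly---that $\Phi\rvert_{\gamma_1}$ maps arc-length null sets to null sets, so that the a.e.\ statements on $\gamma_2$ transfer to $\gamma_1$. Both routes are legitimate. Yours is shorter and sits naturally on the paper's own definition of the boundary operator as a non-tangential limit; the paper's is more self-contained at the boundary and yields the independently useful fact that the principal-value (symmetric-excision) structure itself is preserved by Möbius maps, which your argument bypasses entirely. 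Your observation that $f\mapsto\sqrt{\Phi'}\cdot(f\circ\Phi)$ is an isometry of $L^2(\gamma_2)$ onto $L^2(\gamma_1)$ is also a slight sharpening of the paper's remark that it is merely a linear isomorphism.
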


\begin{proof}
Differentiate \eqref{E:Möbius-form} and observe that $\Phi'$ is the square of a meromorphic function defined on the Riemann sphere.
Now choose a value of $\sqrt{ad-bc}$ and then set
\begin{equation}\label{E:Sqrt-Phi'}
\sqrt{\Phi'(\zeta)} = \frac{\sqrt{ad-bc}}{c\zeta+d}.
\end{equation}

Observe that the map $f \mapsto \sqrt{\Phi'}\cdot (f\circ\Phi)$ is a linear isomorphism from $L^2(\gamma_2)$ to $L^2(\gamma_1)$.
Now let $f \in L^2(\gamma_2)$, $\zeta \in \gamma_1$ and $\xi = \Phi(\zeta) \in \gamma_2$.
If $z \in \Omega^1_\pm$, then the image point $\Phi(z) \in \Phi(\Omega^1_\pm) = \Omega_\pm^2$ and
\begin{align}
(\bm{C}_\pm^2 f) \circ \Phi (z) = \frac{1}{2\pi i} \oint_{\gamma_2} \frac{f(\xi)}{\xi - \Phi(z)} \,d\xi
&= \frac{1}{2\pi i} \oint_{\gamma_1} \frac{f(\Phi(\zeta))}{\Phi(\zeta) - \Phi(z)} \cdot \Phi'(\zeta) \,d\zeta \label{E:CauchyMöbiusInv-0} \\
&= \frac{1}{2\pi i} \oint_{\gamma_1} \frac{f(\Phi(\zeta))}{\frac{a\zeta+b}{c\zeta+d} - \frac{az+b}{cz+d}} \cdot \frac{ad-bc}{(c\zeta+d)^2} \,d\zeta. \label{E:CauchyMöbiusInv-1}
\end{align}
Rearranging,
\begin{align}
\eqref{E:CauchyMöbiusInv-1} &= \frac{1}{2\pi i} \oint_{\gamma_1} \frac{(cz+d)(c\zeta+d)(ad-bc)}{(ad-bc)(\zeta-z)(c\zeta+d)^2}\, f(\Phi(\zeta))\, d\zeta \notag \\
&= \frac{1}{2\pi i} \frac{cz+d}{\sqrt{ad-bc}} \oint_{\gamma_1} \frac{\sqrt{ad-bc}}{(c\zeta+d)}\, \frac{f(\Phi(\zeta))}{(\zeta-z)} \, d\zeta \notag \\
&= \frac{1}{2\pi i \sqrt{\Phi'(z)}} \oint_{\gamma_1} \frac{ \sqrt{\Phi'(\zeta)} f(\Phi(\zeta))}{\zeta-z} \, d\zeta = \frac{1}{\sqrt{\Phi'(z)}}\, \bm{C}_\pm^1\left(\sqrt{\Phi'}(f\circ\Phi)\right)(z),
\label{E:CauchyMöbiusInv-2}
\end{align}
giving the result when $z \in \Omega_\pm$.

The argument when $z \in \gamma_1$ follows the same lines except that the integrals must be interpreted in the principle value sense.
For $\epsilon>0$ let $\gamma_{1,\epsilon}:= \gamma_1 \setminus D(z,\epsilon)$, i.e., the original curve with all points within $\epsilon$ of $z$ removed.
Now start from the integral in \eqref{E:CauchyMöbiusInv-2} evaluated over the truncated curve $\gamma_{1,\epsilon}$, and work backwards to \eqref{E:CauchyMöbiusInv-0}:
\begin{align}
\frac{1}{2\pi i} \, \mathrm{P.V.}\oint_{\gamma_1} \frac{\sqrt{\Phi'(\zeta)}f(\Phi(\zeta))}{\zeta-z}\,d\zeta 
&= \lim_{\epsilon \to 0}\frac{1}{2\pi i} \oint_{\gamma_{1,\epsilon}} \frac{\sqrt{\Phi'(\zeta)}f(\Phi(\zeta))}{\zeta-z}\,d\zeta \label{E:principle-value-start} \\ 
&= \lim_{\epsilon \to 0}\frac{\sqrt{\Phi'(z)}}{2\pi i} \oint_{\Phi(\gamma_{1,\epsilon})} \frac{f(\xi)}{\xi-\Phi(z)}\,d\zeta. \label{E:principle-value-candidate}
\end{align}

We claim that the integral in \eqref{E:principle-value-candidate} is also a principle value integral in the ordinary sense.
Indeed, the two endpoints of the truncated curve $\Phi(\gamma_{1,\epsilon})$ approach the point $\Phi(z)$ at the same rate as $\epsilon \to 0$ as a consequence of the fact that the image of the disc $D(z,\epsilon)$ under $\Phi$ tends asymptotically to the disc $D(\Phi(z),|\Phi'(z)|\epsilon)$ as $\epsilon \to 0$.
This means that by setting $\gamma_{2,\delta}:= \gamma_2 \setminus D(\Phi(z),\delta)$ with $\delta := |\Phi'(z)|\epsilon$,
\begin{align}
\eqref{E:principle-value-candidate} = \lim_{\epsilon \to 0} \frac{\sqrt{\Phi'(z)}}{2\pi i} \oint_{\Phi(\gamma_{1,\epsilon})} \frac{f(\xi)}{\xi-\Phi(z)}\,d\zeta
&= \lim_{\delta \to 0} \frac{\sqrt{\Phi'(z)}}{2\pi i} \oint_{\gamma_{2,\delta}} \frac{f(\xi)}{\xi-\Phi(z)}\,d\zeta \notag \\ 
&= \frac{\sqrt{\Phi'(z)}}{2\pi i} \, \mathrm{P.V.} \oint_{\gamma_2} \frac{f(\xi)}{\xi-\Phi(z)}\,d\zeta. \label{E:principle-value-confirmed}
\end{align}
Thus, the string of equalities from \eqref{E:principle-value-start} to \eqref{E:principle-value-confirmed} shows
\begin{align*}
\bm{C}_\pm^1 \left(\sqrt{\Phi'}\cdot(f\circ\Phi) \right)(z) &= \frac{\sqrt{\Phi'(z)}\cdot f(\Phi(z))}{2} \pm \frac{1}{2\pi i} \, \mathrm{P.V.}\oint_{\gamma_1} \frac{\sqrt{\Phi'(\zeta)}f(\Phi(\zeta))}{\zeta-z}\,d\zeta \\
&= \frac{\sqrt{\Phi'(z)}\cdot f(\Phi(z))}{2} \pm \frac{\sqrt{\Phi'(z)}}{2\pi i} \, \mathrm{P.V.} \oint_{\gamma_2} \frac{f(\xi)}{\xi-\Phi(z)}\,d\zeta \\
&= \sqrt{\Phi'(z)} \cdot \left((\bm{C}^2_\pm f) \circ \Phi\right)(z).
\end{align*}
\end{proof}


\begin{theorem}\label{T:Cauchy-Mobius-Inv}
Suppose $\gamma_1$ is simple closed Lipschitz curve in the plane oriented counterclockwise and that $\Phi$ is a Möbius transformation whose pole lies off of $\gamma_1$.
Define the curve $\gamma_2 = \Phi(\gamma_1)$ 
(oriented as in Theorem \ref{T:Cauchy-transform-Möbius-invariance}) and let  $C_\pm^1(z,\zeta)$ and $C_\pm^2(z,\zeta)$ denote the Cauchy kernels of $\gamma_1$ and $\gamma_2$, respectively.
Then
\begin{equation}\label{E:Cauchy-Mobius-Inv}
C_\pm^1(z,\zeta) = \sqrt{\Phi'(z)} \cdot C_\pm^2(\Phi(z),\Phi(\zeta)) \cdot  \overline{\sqrt{\Phi'(\zeta)}}.
\end{equation}
\end{theorem}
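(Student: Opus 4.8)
The plan is to deduce the kernel identity \eqref{E:Cauchy-Mobius-Inv} directly from the operator transformation law already established in Theorem \ref{T:Cauchy-transform-Möbius-invariance}, so that the orientation bookkeeping encoded in the $\pm$ signs need not be revisited. Fix $z \in \Omega_\pm^1 \setminus \{\infty\}$; then $\Phi(z) \in \Omega_\pm^2$, and for such interior (resp.\ exterior) points both Cauchy transforms act through genuine, pointwise convergent integrals against their kernels. For $f \in L^2(\gamma_2)$ I would expand both sides of \eqref{E:Cauchy-transform-Möbius-invariance}: the left-hand side as $\int_{\gamma_1} C_\pm^1(z,\zeta)\sqrt{\Phi'(\zeta)}\,f(\Phi(\zeta))\,d\sigma_1(\zeta)$, and the right-hand side as $\sqrt{\Phi'(z)}\int_{\gamma_2} C_\pm^2(\Phi(z),\xi)\, f(\xi)\,d\sigma_2(\xi)$.

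The key manipulation is to pull the right-hand integral back to $\gamma_1$ via the substitution $\xi = \Phi(\zeta)$. Since $\Phi$ is conformal, arc length transforms by $d\sigma_2(\Phi(\zeta)) = |\Phi'(\zeta)|\,d\sigma_1(\zeta)$, and I would then split this Jacobian symmetrically using the elementary identity $|\Phi'(\zeta)| = \sqrt{\Phi'(\zeta)}\cdot\overline{\sqrt{\Phi'(\zeta)}}$, with the branch fixed in \eqref{E:Sqrt-Phi'}. After the substitution the right-hand side becomes $\int_{\gamma_1}\big(\sqrt{\Phi'(z)}\,C_\pm^2(\Phi(z),\Phi(\zeta))\,\overline{\sqrt{\Phi'(\zeta)}}\big)\cdot \sqrt{\Phi'(\zeta)}\,f(\Phi(\zeta))\,d\sigma_1(\zeta)$, pairing the very same test function $\sqrt{\Phi'(\zeta)}\,f(\Phi(\zeta))$ against the candidate kernel as appears on the left.

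To finish I would invoke the fact, recorded in the proof of Theorem \ref{T:Cauchy-transform-Möbius-invariance}, that $f \mapsto \sqrt{\Phi'}\cdot(f\circ\Phi)$ is a linear isomorphism of $L^2(\gamma_2)$ onto $L^2(\gamma_1)$; hence $g(\zeta) := \sqrt{\Phi'(\zeta)}\,f(\Phi(\zeta))$ ranges over all of $L^2(\gamma_1)$. For the fixed $z$, both $\zeta \mapsto C_\pm^1(z,\zeta)$ and $\zeta \mapsto \sqrt{\Phi'(z)}\,C_\pm^2(\Phi(z),\Phi(\zeta))\,\overline{\sqrt{\Phi'(\zeta)}}$ belong to $L^2(\gamma_1)$, since each is bounded on $\gamma_1$ once $z$ lies off $\gamma_1$ and the pole of $\Phi$ lies off $\gamma_1$. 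Both induce, via $g \mapsto \int_{\gamma_1} K(\zeta)\,g(\zeta)\,d\sigma_1(\zeta)$, the same bounded functional on $L^2(\gamma_1)$, so non-degeneracy of this bilinear pairing forces the two kernels to agree for a.e.\ $\zeta \in \gamma_1$, which is exactly \eqref{E:Cauchy-Mobius-Inv}.

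I expect the only genuinely delicate point to be pairing the $L^2$-kernel representations with the uniqueness step: this is routine but must be stated carefully, because the tangent vector $T$ hidden inside each Cauchy kernel is only defined almost everywhere on a Lipschitz curve, so the a.e.\ conclusion is sharp. As a self-contained alternative I would note that \eqref{E:Cauchy-Mobius-Inv} can also be checked by direct substitution into the explicit kernel formula \eqref{E:Cauchy-kernel-def}, using the factorization $\Phi(\zeta)-\Phi(z) = (ad-bc)(\zeta-z)\big/\big[(c\zeta+d)(cz+d)\big]$ together with the conformal tangent rule $T_2(\Phi(\zeta)) = \tfrac{\Phi'(\zeta)}{|\Phi'(\zeta)|}\,T_1(\zeta)$; all factors involving $c\zeta+d$, $cz+d$, and $ad-bc$ then cancel. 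This second route is short but obliges one to re-examine the orientation conventions, which is precisely the nuisance the operator-based argument sidesteps.
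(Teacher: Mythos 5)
Your proof is correct, but it takes a genuinely different route from the paper's. The paper proves \eqref{E:Cauchy-Mobius-Inv} by exactly the direct kernel computation you relegate to your final paragraph: starting from \eqref{E:Cauchy-kernel-def}, it derives the tangent rule $T_2(\Phi(\zeta)) = \frac{\Phi'(\zeta)}{|\Phi'(\zeta)|}\,T_1(\zeta)$ from a parametrization of $\gamma_1$, fixes the branch \eqref{E:Sqrt-Phi'}, and cancels everything through the algebraic identity $\frac{\sqrt{\Phi'(z)}\sqrt{\Phi'(\zeta)}}{\Phi(\zeta)-\Phi(z)} = \frac{1}{\zeta-z}$; no appeal to Theorem \ref{T:Cauchy-transform-Möbius-invariance} is made. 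Your main argument instead deduces the kernel law from the operator law \eqref{E:Cauchy-transform-Möbius-invariance}: pull the $\gamma_2$-integral back to $\gamma_1$ via $\xi = \Phi(\zeta)$, split the Jacobian as $|\Phi'(\zeta)| = \sqrt{\Phi'(\zeta)}\,\overline{\sqrt{\Phi'(\zeta)}}$, use that $f \mapsto \sqrt{\Phi'}\cdot(f\circ\Phi)$ maps onto $L^2(\gamma_1)$, and conclude by non-degeneracy of the pairing (e.g.\ testing against $\overline{K_1-K_2}$, both kernels being bounded hence in $L^2(\gamma_1)$ for fixed $z$ off the curve) that the two kernels agree a.e. This is sound, and what it buys is economy: the orientation bookkeeping and the Möbius algebra are inherited wholesale from the already-proved operator theorem rather than redone. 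What the paper's computation buys in exchange is a pointwise identity, valid for every $z$ (on or off $\gamma_1$) and every $\zeta$ at which $T_1$ exists, and logical independence from Theorem \ref{T:Cauchy-transform-Möbius-invariance}; your route yields \eqref{E:Cauchy-Mobius-Inv} only for $z \in \Omega_\pm^1$ and a.e.\ $\zeta$, which is, as you note, the minimum needed for the application in Theorem \ref{T:Lambda-Proj-Inv}. If you write this up, make two points explicit: (i) the single point $z = \Phi^{-1}(\infty)$ must be excluded or treated by a limit, since there $\sqrt{\Phi'(z)}$ is infinite and the right side of \eqref{E:Cauchy-Mobius-Inv} is indeterminate (the paper defers the same issue to the proof of Theorem \ref{T:Lambda-Proj-Inv}); (ii) your boundedness claim for the pulled-back kernel uses that $\gamma_2$ is a compact planar curve with $\Phi(z) \notin \gamma_2$, both consequences of the hypotheses that the pole of $\Phi$ lies off $\gamma_1$ and $z \notin \gamma_1$.
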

\begin{proof}
Since both curves are Lipschitz, tangent vectors exist almost everywhere.
If $\zeta(t)$ parameterizes $\gamma_1$, then $\Phi(\zeta(t))$ parameterizes $\gamma_2$.
The unit tangent to $\gamma_1$ can be written as $T_1(\zeta(t)) = \zeta'(t)/|\zeta'(t)|$, and so the unit tangent to $\gamma_2$ can be written
\begin{equation*}
T_2(\Phi(\zeta(t))) = \frac{\Phi'(\zeta(t))\cdot \zeta'(t)}{|\Phi'(\zeta(t))\cdot \zeta'(t)|} = \frac{\Phi'(\zeta(t))}{|\Phi'(\zeta(t))|} T_1(\zeta(t)).
\end{equation*}
Going forward, we omit reference to the parameter $t$.

Assume $\Phi$ takes the form \eqref{E:Möbius-form}, with $ad-bc \neq 0$, and choose a value of $\sqrt{ad-bc}$ as in \eqref{E:Sqrt-Phi'} to obtain a meromorphic square root of $\Phi$ defined on all of the Riemann sphere.
From the definition of the Cauchy kernel in \eqref{E:Cauchy-kernel-def}, we have
\begin{align}
\sqrt{\Phi'(z)} \cdot C_\pm^2(\Phi(z),\Phi(\zeta)) \cdot \ol{\sqrt{\Phi'(\zeta)}} 
&= \pm \sqrt{\Phi'(z)} \cdot \frac{T_2(\Phi(\zeta))}{\Phi(\zeta) - \Phi(z)} \cdot \ol{\sqrt{\Phi'(\zeta)}} \notag \\
&= \pm \frac{\sqrt{\Phi'(z)} \sqrt{\Phi'(\zeta)}}{\Phi(\zeta) - \Phi(z)}\,  T_1(\zeta). \label{E:Cauchy-kernel-intermediate-step}
\end{align}
A simple computation now shows
\begin{equation}\label{E:Cauchy-kernel-intermediate-step-2}
\frac{\sqrt{\Phi'(z)}\sqrt{\Phi'(\zeta)}}{\Phi(\zeta)-\Phi(z)} 
= \frac{(ad-bc)}{(c\zeta+d)(cz+d)}  \left(\frac{a\zeta+b}{c\zeta+d} - \frac{az+b}{cz+d} \right)^{-1} 
= \frac{1}{\zeta - z}.
\end{equation}
\end{proof}

We now prove that $\Lambda(\gamma,z)$ is Möbius invariant.
This in particular shows that $\Lambda(\gamma,z)$ is well-defined when $\gamma$ is an unbounded Lipschitz curve (recall the discussion of extending $\Lambda$ to unbounded curves following Theorem \ref{T:intro-3-props-of-Lambda}).

\begin{theorem}\label{T:Lambda-Proj-Inv}
Suppose $\gamma$ is a simple closed Lipschitz curve in the plane and $\Phi$ is a Möbius transformation whose pole lies off of $\gamma$. 
Then for $z$ in the Riemann sphere,
\begin{equation*}
\Lambda(\gamma,z) = \Lambda(\Phi(\gamma),\Phi(z)).
\end{equation*}
\end{theorem}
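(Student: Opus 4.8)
The plan is to handle the numerator and denominator of $\Lambda_\pm$ separately, feed in the two transformation laws already in hand, and watch the conformal factors $\Phi'$ cancel. Write $\gamma_1 = \gamma$ and $\gamma_2 = \Phi(\gamma)$, and fix first a point $z \in \Omega_\pm^1$ with $z \neq \infty$ and $\Phi(z) \neq \infty$ (so $z$ is not the pole of $\Phi$). Since $\Phi$ restricts to a biholomorphism of $\Omega_\pm^1$ onto its image $\Omega_\pm^2$, Proposition \ref{P:SzegoTransLaw} applies; evaluating \eqref{E:SzegoTransLaw} on the diagonal and using $\sqrt{\Phi'(z)}\cdot\overline{\sqrt{\Phi'(z)}} = |\Phi'(z)|$ gives $S_\pm^1(z,z) = |\Phi'(z)|\,S_\pm^2(\Phi(z),\Phi(z))$, hence $\sqrt{S_\pm^1(z,z)} = \sqrt{|\Phi'(z)|}\cdot\sqrt{S_\pm^2(\Phi(z),\Phi(z))}$.

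For the numerator I would substitute the Cauchy kernel law of Theorem \ref{T:Cauchy-Mobius-Inv}, which upon taking moduli reads $|C_\pm^1(z,\zeta)|^2 = |\Phi'(z)|\,|\Phi'(\zeta)|\,|C_\pm^2(\Phi(z),\Phi(\zeta))|^2$. Integrating against arc length and changing variables via $\xi = \Phi(\zeta)$ — under which $d\sigma_2(\xi) = |\Phi'(\zeta)|\,d\sigma_1(\zeta)$, so the stray factor $|\Phi'(\zeta)|$ is absorbed exactly — yields $\norm{C_\pm^1(z,\cdot)}_{L^2(\gamma_1)}^2 = |\Phi'(z)|\cdot\norm{C_\pm^2(\Phi(z),\cdot)}_{L^2(\gamma_2)}^2$. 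Forming the ratio, the common factor $\sqrt{|\Phi'(z)|}$ cancels and I obtain $\Lambda_\pm(\gamma_1,z) = \Lambda_\pm(\gamma_2,\Phi(z))$ for all such $z$.

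Two families of exceptional points remain. When $z \in \gamma_1$ both sides equal $1$ by definition of $\Lambda$ in \eqref{E:Lambda-def}, since $\Phi(z) \in \gamma_2$. The cases $z = \infty$ and $\Phi(z) = \infty$ (i.e. $z$ equal to the pole of $\Phi$) are not covered by the pointwise laws because $\sqrt{\Phi'}$ degenerates there; for these I would invoke continuity at infinity — part \eqref{I:Lambda-cont-at-infty} of Theorem \ref{T:intro-3-props-of-Lambda}, established in Theorem \ref{T:Lambda-at-infinity} — and pass to the limit through nearby points of $\Omega_\pm^1$, for which the identity has already been proved.

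A bookkeeping point that warrants care, and which I expect to be the only genuinely delicate issue, is orientation: if the pole of $\Phi$ lies in $\Omega_+^1$, then by Theorem \ref{T:Cauchy-transform-Möbius-invariance} the induced orientation on $\gamma_2$ is reversed, so the $\Phi$-induced label on $\Omega_\pm^2$ need not match the interior/exterior convention used to define $\Lambda(\gamma_2,\cdot)$ in \eqref{E:Lambda-def}. This causes no harm, however, because $\Lambda_\pm$ depends on the curve only through the magnitudes $|C_\pm(z,\zeta)|$ and $S_\pm(z,z)$, both of which are insensitive to the direction of traversal: reversing orientation sends $T \mapsto -T$ and simultaneously flips the sign in \eqref{E:Cauchy-kernel-def}, leaving $|C_\pm|$ unchanged, while $S_\pm(z,z)$ is intrinsic to the domain. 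Thus the value $\Lambda$ assigns to each domain is well defined independent of orientation, and the cancellation above delivers the stated equality in every case.
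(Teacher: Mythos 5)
Your proof is correct and takes essentially the same route as the paper's: both combine the Cauchy kernel transformation law of Theorem \ref{T:Cauchy-Mobius-Inv} (via the change of variables $\xi = \Phi(\zeta)$, which absorbs the stray $|\Phi'(\zeta)|$) with the Szeg\H{o} law of Proposition \ref{P:SzegoTransLaw} so that the common factor $|\Phi'(z)|$ cancels in the ratio, handle $z \in \gamma$ directly from the definition, and dispatch the exceptional points $z = \infty$ and $\Phi(z) = \infty$ by a limiting argument through nearby points using the continuity at infinity established in Theorem \ref{T:Lambda-at-infinity}. Your explicit treatment of the orientation reversal when the pole of $\Phi$ lies in $\Omega_+$ is a bookkeeping point the paper leaves implicit, and your resolution---that $|C_\pm(z,\zeta)|$ and $S_\pm(z,z)$ are insensitive to the direction of traversal---is correct.
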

\begin{proof}
Under the assumption on $\Phi$, observe that the image curve $\Phi(\gamma)$ is also a simple closed Lipschitz curve in the plane.
Now write $\gamma_1 := \gamma$ and $\gamma_2 := \Phi(\gamma_1)$.

If $z \in \gamma_1$, then $\Phi(z) \in \gamma_2$, so by definition $\Lambda(\gamma_1,z) = 1 = \Lambda(\gamma_2,\Phi(z))$.

Let $\Omega^j_\pm$ be the domains bounded by $\gamma_j$ and suppose $z \in \Omega^1_\pm$.
By Theorem \ref{T:Cauchy-Mobius-Inv},
\begin{align}
\norm{C_\pm^1(z,\cdot)}_{L^2(\gamma_1)}^2 
&= |\Phi'(z)| \int_{\gamma_1} |C_\pm^2(\Phi(z),\Phi(\zeta))|^2 |\Phi'(\zeta)| \,d\sigma(\zeta) \notag \\
&= |\Phi'(z)| \int_{\gamma_2} |C_\pm^2(\Phi(z),\xi)|^2\,d\sigma(\xi)
= |\Phi'(z)|\cdot \norm{C_\pm^2({\Phi(z)},\cdot)}_{L^2(\gamma_2)}^2. \label{E:Cauchy-int-trans}
\end{align}

Now denote the Szeg\H{o} kernel of $\ch_\pm^2(\gamma_j)$ by $S_\pm^j$.
Since $\Phi$ is a biholomorphism from $\Omega^1_\pm$ to $\Omega^2_\pm$, Proposition~\ref{P:SzegoTransLaw} shows $S^1_\pm(z,z) = |\Phi'(z)| \cdot S^2_\pm(\Phi(z),\Phi(z))$.
This with \eqref{E:Cauchy-int-trans} shows
\begin{equation*}
\Lambda_\pm(\gamma_1,z) 
= \frac{\norm{C_1(z,\cdot)}_{L^2(\gamma_1)}^2}{S^1_\pm(z,z)} 
= \frac{|\Phi'(z)| \cdot \norm{C_2(\Phi(z),\cdot)}_{L^2(\gamma_2)}^2}{|\Phi'(z)| \cdot S^2_\pm(\Phi(z),\Phi(z))} = \Lambda_\pm(\gamma_2,\Phi(z)).
\end{equation*}

The ratio above needs slightly more care in two cases: $(i)$ when $z=\infty$, meaning that $\Phi'(z) = 0$, and $(ii)$ when $\Phi(z) = \infty$, implying that $\Phi'(z) = \infty$.
In either case, the indeterminate ratio is only problematic at this specific $z$; in a punctured neighborhood of $z$, the ratio is valid.
The result now follows by working nearby and then taking limits, in which case we invoke Theorem \ref{T:Lambda-at-infinity} on the continuity of $\Lambda(\gamma,z)$ as $z \to \infty$.
\end{proof}




\subsection{Circles and rigidity}\label{SS:circles-as-extremal-curves}

Circles are shown to be the unique class of extremal curves which globally minimize $\Lambda$.
This leads to interesting rigidity results, including a strengthened version of a famous observation made by Kerzman and Stein; see Corollary \ref{C:vanishing-of-the-Kerzman-Stein-kernel}. 

\begin{proposition}\label{P:Lambda-on-circles=1}
If $\gamma$ is a circle (or a line, including the point at $\infty$), then $\Lambda(\gamma,\cdot) \equiv 1$.
\end{proposition}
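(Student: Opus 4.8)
The plan is to invoke the Möbius invariance just proved (Theorem \ref{T:Lambda-Proj-Inv}, i.e.\ part \eqref{III:Möbius-invariance} of Theorem \ref{T:intro-3-props-of-Lambda}) to collapse the whole family of circles and lines to a single model curve, the unit circle $\partial\D$, and then verify $\Lambda(\partial\D,\cdot)\equiv 1$ by direct calculation. Since the Möbius group acts transitively on the family of circles and lines in $\hat\C$, for any circle $\gamma$ there is a Möbius transformation $\Phi$ with pole off $\gamma$ and $\Phi(\gamma)=\partial\D$, whence Theorem \ref{T:Lambda-Proj-Inv} gives $\Lambda(\gamma,z)=\Lambda(\partial\D,\Phi(z))$. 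For a line (a circle through $\infty$), $\Lambda$ is \emph{defined} as precisely such a pullback to a planar curve, which we may take to be $\partial\D$, so the reduction covers lines as well. Everything therefore reduces to showing $\Lambda(\partial\D,\cdot)\equiv 1$.

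For the interior I would parametrize $\zeta=e^{i\theta}$, so that the arc-length tangent is $T(\zeta)=i\zeta$ and the Cauchy kernel \eqref{E:Cauchy-kernel-def} becomes $C_+(z,\zeta)=\zeta/(2\pi(\zeta-z))$, giving $|C_+(z,\zeta)|^2=1/(4\pi^2|\zeta-z|^2)$ since $|\zeta|=1$. The one genuine computation is the integral $\int_0^{2\pi}|e^{i\theta}-z|^{-2}\,d\theta = 2\pi/(1-|z|^2)$ for $|z|<1$, which is just the normalization of the Poisson kernel; it yields $\norm{C_+(z,\cdot)}^2_{L^2(\partial\D)}=1/(2\pi(1-|z|^2))$. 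On the other hand \eqref{E:Szego-on-disc} gives $S_+(z,z)=1/(2\pi(1-|z|^2))$, so the two quantities agree exactly and $\Lambda_+(\partial\D,z)=1$ for every $z\in\D$.

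For the exterior I would use the same integral in the form $\int_0^{2\pi}|e^{i\theta}-z|^{-2}\,d\theta = 2\pi/(|z|^2-1)$ for $|z|>1$, noting $|C_-|=|C_+|$ pointwise (the kernels differ only by a sign), so $\norm{C_-(z,\cdot)}^2_{L^2(\partial\D)}=1/(2\pi(|z|^2-1))$. To obtain $S_-(z,z)$ I would take $g(z)=1/z$ as the Riemann map from $\Omega_-=\{|z|>1\}$ to $\D$ (with $g(\infty)=0$) and apply the transformation law \eqref{E:SzegoTransLaw} with \eqref{E:Szego-on-disc}: since $|g'(z)|=1/|z|^2$ and $|g(z)|^2=1/|z|^2$, this gives $S_-(z,z)=|g'(z)|/(2\pi(1-|g(z)|^2))=1/(2\pi(|z|^2-1))$. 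Hence $\Lambda_-(\partial\D,z)=1$ for all $|z|>1$ as well. The value on $\gamma$ itself is $1$ by definition, and at $\infty$ the assigned value $\sqrt{\sigma(\partial\D)/(2\pi\kappa(\partial\D))}=\sqrt{2\pi/(2\pi\cdot 1)}=1$ agrees, consistently with the continuity guaranteed by Theorem \ref{T:Lambda-at-infinity}.

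The computations are entirely routine; the only real content is the identity matching $\norm{C_\pm(z,\cdot)}^2$ with $S_\pm(z,z)$ for the disc, both sides being $1/(2\pi(1-|z|^2))$ on the inside and $1/(2\pi(|z|^2-1))$ on the outside. I expect the only point needing a word of care to be confirming that the reduction via Möbius invariance legitimately subsumes the case of lines, but this is immediate, since $\Lambda$ on an unbounded curve is by definition the pullback under a Möbius map to a circular planar curve.
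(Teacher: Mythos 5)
Your proof is correct, but it distributes the work differently from the paper's. Both arguments rest on the Möbius invariance of Theorem \ref{T:Lambda-Proj-Inv} to pass from a general circle or line to the unit circle; the difference lies in how $\Lambda(b\D,\cdot)\equiv 1$ is then established. The paper computes $\Lambda(b\D,z)$ at a \emph{single} point, the center $z=0$ (where the relevant integral $\int_{b\D}|\zeta|^{-2}\,d\sigma = 2\pi$ is trivial), and then invokes invariance a \emph{second} time, via the Möbius maps $\varphi_z(w)=\frac{z-w}{1-\ol{z}w}$ (and $w\mapsto w^{-1}$ for $z=\infty$), which send $b\D$ to itself and $0$ to $z$; every other value of $\Lambda$ on the sphere then comes for free. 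You instead compute $\Lambda(b\D,z)$ directly at every point: the Poisson-kernel integral $\int_0^{2\pi}|e^{i\theta}-z|^{-2}\,d\theta = 2\pi/\bigl|1-|z|^2\bigr|$ for the Cauchy norm, formula \eqref{E:Szego-on-disc} inside, the Riemann map $g(z)=1/z$ with the transformation law \eqref{E:SzegoTransLaw} outside, and $\kappa(b\D)=1$ at infinity. Your route costs a few more (routine, and correctly executed) computations, but it is more self-contained on the model curve: it never needs the observation that $\varphi_z$ for $|z|>1$ maps $\D$ biholomorphically onto $\hat{\C}\setminus\ol{\D}$, and it independently confirms that the values assigned to $\Lambda$ on $\gamma$ and at $\infty$ in \eqref{E:Lambda-def} are the consistent ones. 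The paper's route is leaner, using invariance to collapse all computation to one point; yours trades that economy for explicitness.
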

\begin{proof}
First let $\gamma = b\D$ be the unit circle. Then \eqref{E:Lambda-def} and \eqref{E:Szego-on-disc} show
\begin{align*}
\Lambda(b\D,0)^2 = \frac{1}{4\pi^2 S_\D(0,0)} \int_\gamma \frac{d\sigma(\zeta)}{|\zeta|^2} = \frac{1}{2\pi} \cdot 2\pi = 1.
\end{align*}
Given $z \in \C \backslash b\D$, consider the Möbius transformation $\varphi_z(w) = \frac{z-w}{1-\ol{z}w}$.
If $|z|<1$ then $\varphi_z$ is an automorphism of $\D$ and if $|z|>1$ then $\varphi_z$ is a biholomorphic map from $\D$ onto $\hat{\C} \backslash \ol{\D}$.
In either case $\varphi_z(0) = z$.
Theorem \ref{T:Lambda-Proj-Inv} now shows $1 = \Lambda(b\D,0) = \Lambda(\varphi_z(b\D),\varphi_z(0)) = \Lambda(b\D,z)$.
For $z=\infty$, use the map $\phi_\infty(w) = w^{-1}$ and repeat the argument above to see $\Lambda(b\D,\infty) = 1$.

Now let $\gamma \subset \hat{\C}$ be any circle and $z \in \Omega_\pm$.
Then there is a Möbius transformation taking $\gamma$ to $b\D$; see \cite[Section 3.3]{AhlforsBook}.
Theorem \ref{T:Lambda-Proj-Inv} implies $\Lambda(\gamma,z) = \Lambda(b\D,\Phi(z)) = 1$.
Since $z$ was chosen arbitrarily, we conclude $\Lambda(\gamma,\cdot) \equiv 1$.
\end{proof}

\begin{theorem}\label{T:Lambda>=1}
Let $\gamma$ be a simple closed Lipschitz curve in the Riemann sphere.
\begin{enumerate}
\item $\Lambda(\gamma,z) \ge 1$, for all $z \in \Omega_\pm$.
\item If there is a single $z \in \Omega_\pm$ such that $\Lambda(\gamma,z) = 1$, then $\Lambda(\gamma,\cdot) \equiv 1$ and $\gamma$ is a circle (or a line, including the point at $\infty$).
\end{enumerate}
\end{theorem}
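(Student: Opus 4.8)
The plan is to dispatch part (1) with a one-line orthogonal-projection inequality, to read the equality case off of that same inequality, and then to spend the real effort on part (2), where the equality condition must be converted into a statement forcing $\gamma$ to be a circle.

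First I would record the identity tying the two kernels together. For fixed $z\notin\gamma$ the Cauchy kernel $C_\pm(z,\cdot)$ lies in $L^2(\gamma)$, and for every $f\in\ch^2_\pm(\gamma)$ Cauchy's formula gives $f(z)=\int_\gamma C_\pm(z,\zeta)f(\zeta)\,d\sigma(\zeta)=\langle f,\overline{C_\pm(z,\cdot)}\rangle$, while the reproducing property gives $f(z)=\langle f,S_\pm(\cdot,z)\rangle$. Subtracting shows $\overline{C_\pm(z,\cdot)}-S_\pm(\cdot,z)\perp\ch^2_\pm(\gamma)$, so $S_\pm(\cdot,z)=\bm{S}_\pm\big(\overline{C_\pm(z,\cdot)}\big)$ is the orthogonal projection of $\overline{C_\pm(z,\cdot)}$ onto the Hardy space. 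Since $\norm{S_\pm(\cdot,z)}^2=S_\pm(z,z)$ and orthogonal projections do not increase norm, $\sqrt{S_\pm(z,z)}\le\norm{C_\pm(z,\cdot)}$, which is exactly $\Lambda(\gamma,z)\ge1$; the value at $\infty$ then follows from Theorem \ref{T:Lambda-at-infinity}. This settles (1) and, crucially, identifies the equality case: $\Lambda(\gamma,z_0)=1$ holds iff $\overline{C_\pm(z_0,\cdot)}$ already lies in $\ch^2_\pm(\gamma)$, i.e. iff $\overline{C_\pm(z_0,\cdot)}=S_\pm(\cdot,z_0)$, equivalently $C_\pm(z_0,\zeta)=S_\pm(z_0,\zeta)$ for a.e.\ $\zeta$.

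For part (2) I would first invoke Möbius invariance (Theorem \ref{T:Lambda-Proj-Inv}) to reduce to an interior point, so that $\Omega_+$ is bounded and $z_0\in\Omega_+$. Substituting $C_+(z_0,\zeta)=\frac{T(\zeta)}{2\pi i(\zeta-z_0)}$ into $C_+(z_0,\zeta)=\overline{S_+(\zeta,z_0)}$ and using $|T|=1$ yields the boundary identity $4\pi^2|\zeta-z_0|^2\,|S_+(\zeta,z_0)|^2=1$ a.e.\ on $\gamma$. Hence $h(w):=2\pi(w-z_0)S_+(w,z_0)$ lies in $\ch^2_+(\gamma)$, has unimodular boundary values a.e.\ on $\gamma$, and — since $S_+(\cdot,z_0)$ is zero-free on $\Omega_+$ by Proposition \ref{P:SzegoTransLaw} and the disc formula — has a single simple zero, located at $z_0$. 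The heart of the matter is to show $h$ is a Riemann map. Transporting by the Riemann map $\Psi:\Omega_+\to\D$ with $\Psi(z_0)=0$, $\Psi'(z_0)>0$, the function $\mathcal G:=h\circ\Psi^{-1}$ is inner on $\D$ with a single simple zero at the origin; factoring $\mathcal G(\lambda)=\lambda\,u(\lambda)$ leaves a zero-free inner function $u$. The chain rule gives $u(0)=\mathcal G'(0)=h'(z_0)/\Psi'(z_0)$, and since $h'(z_0)=2\pi S_+(z_0,z_0)=\Psi'(z_0)$ (the Szeg\H{o}--Riemann relation used in Proposition \ref{P:Szego-containment-monotonicity}) we get $u(0)=1$. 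A zero-free inner function satisfies $|u(0)|<1$ unless it is a unimodular constant, so $u\equiv1$ and $h=\Psi$. Finally, applying Proposition \ref{P:SzegoTransLaw} to $\Psi$ gives $2\pi S_+(w,z_0)=\sqrt{\Psi'(z_0)}\,\sqrt{\Psi'(w)}$, whence $\Psi(w)=h(w)=\sqrt{\Psi'(z_0)}\,(w-z_0)\sqrt{\Psi'(w)}$; squaring and separating variables integrates to $\Psi(w)=\frac{\Psi'(z_0)(w-z_0)}{1-c\,\Psi'(z_0)(w-z_0)}$ for a constant $c$, a Möbius transformation. Thus $\Omega_+=\Psi^{-1}(\D)$ is a disc, $\gamma$ is a circle, and $\Lambda(\gamma,\cdot)\equiv1$ follows from Proposition \ref{P:Lambda-on-circles=1}.

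I expect the delicate point to be exactly this rigidity step under the weak (Lipschitz) hypothesis: justifying that the a.e.\ boundary-modulus identity transfers through $\Psi$ to a genuine inner function on $\D$ (this rests on rectifiability of $\gamma$ and absolute continuity of the boundary correspondence), and then excluding a singular inner factor, which could otherwise coexist with a single Blaschke zero. The computation $u(0)=1$ is the clean device that rules this out with no extra smoothness, and it is what allows the proof to bypass the Garabedian machinery discussed in the introduction.
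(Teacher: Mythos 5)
Your proposal is correct, but it takes a genuinely different route from the paper's, so a comparison is in order. For part (1), the paper transports everything to the unit disc via the Riemann map $g$ with $g(0)=z$, writes $\Lambda(\gamma,z)^2=\tfrac{g'(0)}{2\pi}\int_{b\D}|h'|\,d\sigma$ with $h=1/(g-z)$ (a step that needs the Riesz--Privalov theorem, $g'\in H^1$), and extracts the lower bound $1\le\Lambda(\gamma,z)^2$ from the residue theorem. Your argument --- that $S_\pm(\cdot,z)$ is the orthogonal projection of $\ol{C_\pm(z,\cdot)}$ onto $\ch^2_\pm(\gamma)$, so $\sqrt{S_\pm(z,z)}\le\norm{C_\pm(z,\cdot)}$ --- is shorter, uses only the two reproducing properties, and is in fact exactly the Pythagorean computation the paper performs later in the proof of Corollary \ref{C:vanishing-of-the-Kerzman-Stein-kernel} (equation \eqref{E:L2-distance-Szego-Cauchy}); it also hands you the equality criterion $C_\pm(z_0,\cdot)=S_\pm(z_0,\cdot)$ a.e.\ for free. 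For part (2), the paper stays in the disc picture: equality forces $\zeta^2h'(\zeta)\le0$ a.e.\ on $b\D$, and Schwarz reflection plus Liouville makes $\zeta^2h'(\zeta)$ constant, whence $g$ is M\"obius. You instead convert the kernel identity into the statement that $h(w)=2\pi(w-z_0)S_+(w,z_0)$ is an inner-type function with a single simple zero, use $2\pi S_+(z_0,z_0)=\Psi'(z_0)$ to get $u(0)=1$, conclude $h=\Psi$ by the maximum modulus principle, and then the transformation law of Proposition \ref{P:SzegoTransLaw} plus an elementary ODE forces $\Psi$ to be M\"obius. Your route is conceptually attractive --- it explains the rigidity as ``equality forces the normalized Szeg\H{o} kernel to be the Riemann map'' --- whereas the paper's is more elementary and self-contained.

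The one step you flag is indeed the only real issue, and it deserves to be stated precisely, because a holomorphic function on $\D$ whose nontangential boundary values are unimodular a.e.\ need \emph{not} be inner (the reciprocal of a singular inner function is a counterexample), so your factorization $\mathcal{G}(\lambda)=\lambda u(\lambda)$ is not available until boundedness is established. What closes the gap is Smirnov theory: $h\in\ch^2_+(\gamma)=E^2(\Omega_+)$ (the equivalence of Hardy space characterizations for Lipschitz curves cited in the paper), Lipschitz domains are Smirnov domains (the derivative of the conformal map $\varphi=\Psi^{-1}$ is outer, since $\arg\varphi'$ is bounded), and therefore $\mathcal{G}=\left[(h\circ\varphi)(\varphi')^{1/2}\right]/(\varphi')^{1/2}$ is a quotient of an $H^2(\D)$ function by an outer function, hence lies in the Smirnov class $N^+(\D)$; the $N^+$ maximum principle then gives $|\mathcal{G}|\le1$, so $\mathcal{G}$ is inner. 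This input is comparable in depth to the Riesz--Privalov theorem the paper invokes, but note it leans on the Lipschitz hypothesis more heavily (the Smirnov property can fail for general rectifiable Jordan curves, while $g'\in H^1$ holds for all of them). Once $\mathcal{G}$ is known to be inner, your device $u(0)=1$ correctly kills both Blaschke and singular factors in one stroke, and the rest of your argument (the ODE integration showing $\Psi(w)=\Psi'(z_0)(w-z_0)/\left(1-c\,\Psi'(z_0)(w-z_0)\right)$, then Proposition \ref{P:Lambda-on-circles=1}) is sound.
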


\begin{proof}
First suppose that $\gamma$ is a planar curve enclosing the bounded domain $\Omega_+$.
We may assume that $z \in \Omega_+$, thanks to the Möbius invariance of $\Lambda$ established in Theorem \ref{T:Lambda-Proj-Inv}.

Consider the Riemann map $g:\D \to \Omega_+$ with $g(0) = z$ and $g'(0) >0$. 
Proposition \ref{P:SzegoTransLaw} and \eqref{E:Szego-on-disc} show 
\begin{equation}\label{E:Szego-kernel-comp1}
\frac{1}{2\pi} = S_\D(0,0) = \sqrt{g'(0)}\, S_+(g(0),g(0)) \ol{\sqrt{g'(0)}} = g'(0) \,S_+(z,z).
\end{equation}

Now let $\Phi_z(w) = \frac{1}{w-z}$ and define the (unbounded) domain $E = \{ \Phi_z(w): w\in \Omega_+ \}$,
along with the map $h = \Phi_z \circ g : \D \to E$.
The Riesz-Privalov theorem \cite[Section 6.3]{Pommerenke1992} says that $g'$ is contained in the Hardy space $H^1(b\D)$, so in particular, it is integrable on the circle.
The norm of the Cauchy kernel is thus
\begin{equation*}
\norm{C(z,\cdot)}_{L^2(\gamma)}^2 
= \frac{1}{4\pi^2}\int_\gamma \frac{d\sigma(\zeta)}{|\zeta-z|^2} 
= \frac{1}{4 \pi^2} \int_{b\D} \frac{|g'(\zeta)|}{|g(\zeta)-z|^2}\,d\sigma(\zeta) 
= \frac{1}{4 \pi^2} \int_{b\D} |h'(\zeta)|\,d\sigma(\zeta). 
\end{equation*}

Now combine this with \eqref{E:Szego-kernel-comp1}:
\begin{equation}\label{E:Lambda-in-terms-of-h}
\Lambda(\gamma,z)^2 = \frac{\norm{C(z,\cdot)}^2_{L^2(\gamma)}}{S_+(z,z)} = \frac{g'(0)}{2\pi} \int_{b\D} |h'(\zeta)|\,d\sigma(\zeta). 
\end{equation}

The conditions on $g$ show that
\begin{align*}
\frac{1}{h(\zeta)} = g(\zeta) - z &= g'(0)\zeta + \frac{g''(0)}{2}\zeta^2 
+ \cdots = g'(0)\zeta \cdot F_1(\zeta),
\end{align*}
where $F_1$ is a non-vanishing holomorphic function on $\D$ with $F_1(0) = 1$.
Thus,
\begin{equation*}
h(\zeta) = \frac{1}{g'(0)\zeta\cdot F_1(\zeta)} = \frac{1}{g'(0)\zeta} + F_2(\zeta),
\end{equation*}
where $F_2$ is holomorphic on the unit disc.
Consequently,
\begin{equation}\label{E:zeta-h'(zeta)}
\zeta h'(\zeta) = -\frac{1}{g'(0)\zeta} + \zeta F'_2(\zeta).
\end{equation}

The residue theorem now shows
\begin{align*}
1 = -\re\left( \frac{g'(0)}{2\pi i} \oint_{b\D} \zeta h'(\zeta)\,d\zeta \right) 
&= -\re\left(\frac{g'(0)}{2\pi} \int_0^{2\pi} e^{2 i \theta} h'(e^{i\theta})\,d\theta \right) \\
&\le \frac{g'(0)}{2\pi} \int_0^{2\pi} |h'(e^{i\theta})|  \,d\theta \label{E:int-est-sharp-ineq} 
= \frac{g'(0)}{2\pi} \int_{b\D} |h'(\zeta)|\,d\sigma(\zeta) 
= \Lambda(\gamma,z)^2. \notag
\end{align*}

From these computations, $\Lambda(\gamma,z) = 1$ if and only if $e^{2 i \theta} h'(e^{i\theta}) \le 0$ for all $\theta \in [0,2\pi]$, which happens if and only if $\phi(\zeta) := \zeta^2 h'(\zeta) \le 0$ for all $\zeta \in b\D$.
Equation \eqref{E:zeta-h'(zeta)} shows $\phi$ extends holomorphically to the origin, with $\phi(0) = - g'(0)^{-1}$.
Since $\phi$ is real-valued on $b\D$ the Schwarz Reflection Principle applies, yielding a bounded holomorphic extension of $\phi$ to the entire complex plane, which means that $\phi$ is necessarily constant ($\phi  \equiv - g'(0)^{-1}$).

Thus $h'(\zeta) = - g'(0)^{-1}\zeta^{-2}$, meaning that $h(\zeta) = g'(0)^{-1}\zeta^{-1} + C$ for some constant $C$. 
This shows that $g(\zeta) = z + h(\zeta)^{-1}$ is a Möbius transformation and therefore $\gamma = g(b\D)$ is a circle. 
Proposition \ref{P:Lambda-on-circles=1} now shows that $\Lambda(\gamma,\cdot) \equiv 1$.

Now if $\gamma$ is a curve passing through $\infty \in \hat{\C}$, use a Möbius transformation to send it to a bounded planar curve. 
Theorem \ref{T:Lambda-Proj-Inv} shows that this is well defined and the result now follows from the previous case.
\end{proof}

\subsection{Consequences of rigidity}\label{SS:rigidity}

Using a clever geometric description of their eponymous operator, Kerzman and Stein proved the following:
\begin{proposition}[\cite{KerSte78a}, Lemma 7.1]
Let $\Omega$ be a bounded simply connected planar domain with smooth boundary.
The Cauchy and Szeg\H{o} kernels coincide if and only if $\Omega$ is a disc.
\end{proposition}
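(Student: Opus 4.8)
The plan is to treat the two implications separately, drawing the harder direction almost entirely from the rigidity result in Theorem~\ref{T:Lambda>=1}. The conceptual bridge is the observation that the quantity $\sqrt{S_\pm(z,z)}$ in the denominator of $\Lambda_\pm(\gamma,z)$ is exactly the $L^2(\gamma)$-norm of the Szeg\H{o} kernel. Indeed, applying the reproducing property to $f = S_\pm(\cdot,z) \in \ch_\pm^2(\gamma)$ gives $\norm{S_\pm(\cdot,z)}_{L^2(\gamma)}^2 = S_\pm(z,z)$, and conjugate symmetry $\ol{S_\pm(z,\zeta)} = S_\pm(\zeta,z)$ yields $\norm{S_\pm(z,\cdot)}_{L^2(\gamma)} = \norm{S_\pm(\cdot,z)}_{L^2(\gamma)}$. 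Hence
\begin{equation*}
\Lambda_\pm(\gamma,z) = \frac{\norm{C_\pm(z,\cdot)}_{L^2(\gamma)}}{\norm{S_\pm(z,\cdot)}_{L^2(\gamma)}},
\end{equation*}
which is precisely the ratio $(*)$ from the introduction.

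For the forward direction, suppose the Cauchy and Szeg\H{o} kernels coincide, i.e.\ $C(z,\cdot) = S(z,\cdot)$ as elements of $L^2(\gamma)$ for each $z \in \Omega$. Then their $L^2(\gamma)$-norms agree, so the displayed identity forces $\Lambda(\gamma,z) = 1$ at every $z \in \Omega$. Fixing any single such $z$ and invoking Theorem~\ref{T:Lambda>=1}(2), we conclude immediately that $\gamma$ is a circle, i.e.\ $\Omega$ is a disc.

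For the reverse direction, I would first verify the claim on the unit disc by direct computation. On $b\D$ the counterclockwise unit tangent is $T(\zeta) = i\zeta$ and $\ol{\zeta} = \zeta^{-1}$, so the Cauchy kernel \eqref{E:Cauchy-kernel-def} becomes
\begin{equation*}
C_+(z,\zeta) = \frac{i\zeta}{2\pi i(\zeta - z)} = \frac{1}{2\pi(1 - z\ol{\zeta})} = S_\D(z,\zeta),
\end{equation*}
matching the Szeg\H{o} kernel of \eqref{E:Szego-on-disc}. To pass to an arbitrary disc, I would compare transformation laws: the Szeg\H{o} kernel transforms by Proposition~\ref{P:SzegoTransLaw} and the Cauchy kernel transforms by Theorem~\ref{T:Cauchy-Mobius-Inv}, and the two rules are formally identical. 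Thus equality of the kernels is preserved under the Möbius map carrying $b\D$ onto any circle, so $C = S$ whenever $\Omega$ is a disc.

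The argument presents no genuine obstacle once Theorem~\ref{T:Lambda>=1} is in hand; the only points requiring care are the reproducing-kernel identity $\norm{S_\pm(z,\cdot)}_{L^2(\gamma)}^2 = S_\pm(z,z)$ that identifies $\Lambda$ with the ratio $(*)$, and the bookkeeping of the tangent factor $T(\zeta)=i\zeta$ in the boundary computation on $b\D$. In fact this approach proves more than stated: one needs only the single-point inequality $\norm{C(z,\cdot)}_{L^2(\gamma)} \le \norm{S(z,\cdot)}_{L^2(\gamma)}$, which is the content of the strengthened Corollary~\ref{C:vanishing-of-the-KS-intro}.
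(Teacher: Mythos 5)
Your argument is correct, and its hard direction travels the same road the paper does: reduce to the single statement $\Lambda(\gamma,z)=1$ and invoke the rigidity of Theorem \ref{T:Lambda>=1}. The differences are worth recording. The paper never reproves this proposition directly (it cites Kerzman--Stein's geometric argument); its in-text substitute is Corollary \ref{C:vanishing-of-the-Kerzman-Stein-kernel}, whose proof must handle the weaker hypothesis $|C(z,\zeta)|\le|S(z,\zeta)|$ and therefore computes the cross term $2\re\int_\gamma C(z,\zeta)\ol{S(z,\zeta)}\,d\sigma(\zeta)=2S(z,z)$ to arrive at the distance identity \eqref{E:L2-distance-Szego-Cauchy}. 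You bypass that computation entirely: under full kernel equality the $L^2$ norms agree outright, and your observation $\norm{S(z,\cdot)}_{L^2(\gamma)}^2=S(z,z)$ (reproducing property plus conjugate symmetry) identifies $\Lambda$ with the ratio $(*)$, which is all that is needed; your closing remark that a single-point norm inequality already suffices correctly recovers the paper's strengthened version. For the converse direction -- disc implies equality of kernels -- the paper leans on the citation, while you supply a self-contained proof: the explicit check $C_+(z,\zeta)=\frac{i\zeta}{2\pi i(\zeta-z)}=S_\D(z,\zeta)$ on $b\D$, then transport to an arbitrary disc by noting that the Szeg\H{o} law (Proposition \ref{P:SzegoTransLaw}) and the Cauchy law (Theorem \ref{T:Cauchy-Mobius-Inv}) are formally identical, with the sign ambiguity in $\sqrt{\Phi'}$ harmless since it enters once at $z$ and once, conjugated, at $\zeta$. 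It is worth noting that the converse also falls out of the paper's own machinery with no boundary computation: equation \eqref{E:L2-distance-Szego-Cauchy} combined with Proposition \ref{P:Lambda-on-circles=1} ($\Lambda\equiv 1$ on circles) forces $\norm{C(z,\cdot)-S(z,\cdot)}_{L^2(\gamma)}=0$ for every $z$ -- though that shortcut uses the very cross-term identity you avoided, so neither route is strictly cheaper than the other.
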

In other words, $C(z,\zeta) = S(z,\zeta)$ for all $z \in \Omega$, $\zeta \in b\Omega$ if and only if $\Omega$ is a disc.

Theorem \ref{T:Lambda>=1} implies a much stronger rigidity theorem:

\begin{corollary}\label{C:vanishing-of-the-Kerzman-Stein-kernel}
Let $\Omega$ be a bounded simply connected planar domain with Lipschitz boundary.
If there exists a single $z \in \Omega$ such that the Cauchy and Szeg\H{o} kernels satisfy $|C(z,\zeta)| \le |S(z,\zeta)|$ for almost every $\zeta \in b\Omega$, then $\Omega$ is a disc.
\end{corollary}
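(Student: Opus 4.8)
The plan is to deduce Corollary \ref{C:vanishing-of-the-Kerzman-Stein-kernel} from the rigidity part of Theorem \ref{T:Lambda>=1} by showing that the pointwise kernel inequality $|C(z,\zeta)| \le |S(z,\zeta)|$ forces $\Lambda(\gamma,z)=1$ at that single point $z$. Here $\gamma = b\Omega$ and $\Omega = \Omega_+$ is the (bounded) interior domain, so we work with the interior objects $C = C_+$, $S = S_+$, and $\Lambda_+$. The key observation is that the numerator of $\Lambda_+(\gamma,z)^2$ is $\norm{C(z,\cdot)}_{L^2(\gamma)}^2 = \int_\gamma |C(z,\zeta)|^2\,d\sigma(\zeta)$, while the denominator is $S(z,z) = S_+(z,z)$; the latter is exactly the diagonal value of the Szeg\H{o} kernel.

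First I would connect the denominator to an $L^2$ norm of the Szeg\H{o} kernel. The reproducing property of $S_+$ gives the standard identity $S_+(z,z) = \langle S_+(\cdot,z), S_+(\cdot,z)\rangle = \norm{S_+(z,\cdot)}_{L^2(\gamma)}^2$, using conjugate symmetry of the kernel to identify $S_+(\cdot,z)$ with $\ol{S_+(z,\cdot)}$. Thus both numerator and denominator of $\Lambda_+(\gamma,z)^2$ are squared $L^2(\gamma)$-norms of kernel slices, and we may write
\begin{equation*}
\Lambda(\gamma,z)^2 = \frac{\int_\gamma |C(z,\zeta)|^2\,d\sigma(\zeta)}{\int_\gamma |S(z,\zeta)|^2\,d\sigma(\zeta)}.
\end{equation*}
This is precisely the square of the ratio $(*)$ advertised in the introduction, evaluated at $z$.

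Next I would run the hypothesis through this ratio. If $|C(z,\zeta)| \le |S(z,\zeta)|$ for almost every $\zeta \in \gamma$, then squaring and integrating immediately yields $\int_\gamma |C(z,\zeta)|^2\,d\sigma(\zeta) \le \int_\gamma |S(z,\zeta)|^2\,d\sigma(\zeta)$, and hence $\Lambda(\gamma,z)^2 \le 1$, i.e. $\Lambda(\gamma,z)\le 1$. On the other hand, Theorem \ref{T:Lambda>=1}(1) gives the reverse inequality $\Lambda(\gamma,z)\ge 1$ for every $z \in \Omega_\pm$. Combining the two forces $\Lambda(\gamma,z)=1$ at this single interior point $z$. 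Theorem \ref{T:Lambda>=1}(2) then applies verbatim: the existence of a single point where $\Lambda$ equals $1$ implies $\gamma$ is a circle, and since $\Omega$ is a bounded planar domain the only possibility is that $\Omega$ is a disc.

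I expect the only genuine content beyond bookkeeping to be the identity $S_+(z,z) = \norm{S_+(z,\cdot)}_{L^2(\gamma)}^2$, i.e. recognizing the $\Lambda$-denominator as an honest $L^2$-norm of the Szeg\H{o} kernel slice rather than just the diagonal value; this is what lets the hypothesis (phrased in terms of kernels $|C| \le |S|$) talk directly to $\Lambda$. Everything else is monotonicity of the integral against the pointwise bound plus an appeal to the already-established rigidity theorem, so there is no real analytic obstacle here. The corollary is essentially a one-line consequence once $(*)$ is seen to equal $\Lambda$ and the kernel inequality is integrated.
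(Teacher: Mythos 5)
Your proof is correct, and it follows the same overall strategy as the paper: reduce the kernel hypothesis to the statement $\Lambda(b\Omega,z)=1$ at the single point $z$, then invoke the rigidity result of Theorem \ref{T:Lambda>=1}. The shared key ingredient is exactly the identity you isolate, $S(z,z)=\norm{S(z,\cdot)}^2_{L^2(\gamma)}$, which turns the $\Lambda$-denominator into an honest $L^2$-norm so that the pointwise hypothesis can act on it. The mechanics differ in one respect. The paper expands the $L^2$-distance $\norm{C(z,\cdot)-S(z,\cdot)}^2_{L^2(\gamma)}$ and evaluates the cross term $2\re\int_\gamma C(z,\zeta)\ol{S(z,\zeta)}\,d\sigma(\zeta)=2S(z,z)$ by applying the Cauchy integral formula to the Hardy-space function $S(\cdot,z)$; this produces the exact identity $\norm{C(z,\cdot)-S(z,\cdot)}^2_{L^2(\gamma)}=\norm{C(z,\cdot)}^2_{L^2(\gamma)}-S(z,z)=S(z,z)\left(\Lambda(b\Omega,z)^2-1\right)$, from which the lower bound $\Lambda(b\Omega,z)\ge 1$ falls out for free (non-negativity of a squared norm) and the hypothesis forces the distance to vanish. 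You avoid the cross-term computation altogether: you integrate the pointwise inequality to get $\Lambda(b\Omega,z)\le 1$ and import the lower bound from part (1) of Theorem \ref{T:Lambda>=1}. What the paper's route buys is the distance identity itself, which quantifies how far the Cauchy kernel slice is from the Szeg\H{o} kernel slice in terms of $\Lambda$ and ties the $\Lambda$-function to the Kerzman--Stein theme; what your route buys is brevity, one fewer computation, and an argument phrased directly for the inequality hypothesis $|C(z,\zeta)|\le|S(z,\zeta)|$ (the paper's closing sentence is written for the equality case $C(z,\cdot)=S(z,\cdot)$, although its identity covers the stated hypothesis by the same positivity argument you use).
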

\begin{proof}
Given $z \in \Omega$, consider the square of the $L^2$-distance $\norm{C(z,\cdot) - S(z,\cdot)}^2_{L^2(\gamma)} =$
\begin{align*}
&\int_\gamma |C(z,\zeta)|^2\,d\sigma(\zeta) + \int_\gamma |S(z,\zeta)|^2\,d\sigma(\zeta) -2 \re \int_\gamma C(z,\zeta) \ol{S(z,\zeta)} \,d\sigma(\zeta).
\end{align*}
Conjugate symmetry and the Szeg\H{o} reproducing property show that the second integral in the previous line evaluates to $S(z,z)$,
while the Cauchy integral formula yields
\begin{equation*}
2 \re \int_\gamma C(z,\zeta) \ol{S(z,\zeta)} \,d\sigma(\zeta) = 2 \re \int_\gamma C(z,\zeta) S(\zeta,z) \,d\sigma(\zeta) = 2\re S(z,z) = 2 S(z,z).
\end{equation*}
Since $\Omega$ is a bounded domain, Remark \ref{R:positivity-of-S(z,z)} says $S(z,z) > 0$.
Thus,
\begin{equation}\label{E:L2-distance-Szego-Cauchy}
\norm{C(z,\cdot) - S(z,\cdot)}^2_{L^2(\gamma)}
= \norm{C(z,\cdot)}_{L^2(\gamma)}^2 - S(z,z) =  
S(z,z) \left( \Lambda(b\Omega,z)^2 - 1 \right).
\end{equation}

If there exists a $z \in \Omega$ so that $C(z,\zeta) = S(z,\zeta)$ for almost every  $\zeta \in b\Omega$, then \eqref{E:L2-distance-Szego-Cauchy} = 0, meaning that $\Lambda(b\Omega,z) = 1$.
Theorem \ref{T:Lambda>=1} implies that $\Omega$ is a disc.
\end{proof}



\begin{corollary}\label{C:Upper-bound-on-capacity}
Let $\gamma$ be a simple closed Lipschitz curve in the plane.
Then its analytic capacity $\kappa(\gamma)$ and arc length $\sigma(\gamma)$ satisfy the following inequality:
\begin{equation}\label{E:Upper-bound-on-capacity}
\sigma(\gamma) \ge 2\pi \kappa(\gamma).
\end{equation}
Equality holds if and only if $\gamma$ is a circle.
\end{corollary}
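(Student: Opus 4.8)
The plan is to read the corollary directly off the behavior of $\Lambda$ at the point $\infty$, since by \eqref{E:Lambda-def} the value $\Lambda(\gamma,\infty)$ is \emph{defined} to be exactly $\sqrt{\sigma(\gamma)/(2\pi\kappa(\gamma))}$. As $\gamma$ is a bounded planar curve, $\infty$ lies in the exterior domain $\Omega_-$, so both conclusions should follow by applying the two parts of Theorem \ref{T:Lambda>=1} at $z=\infty$, once the value there is understood as a limit of finite values via Theorem \ref{T:Lambda-at-infinity}.

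For the inequality I would simply combine Theorem \ref{T:Lambda-at-infinity} with the lower bound of Theorem \ref{T:Lambda>=1}. By \eqref{E:Lambda-def} and Theorem \ref{T:Lambda-at-infinity},
\[
\sqrt{\frac{\sigma(\gamma)}{2\pi\kappa(\gamma)}} \;=\; \Lambda(\gamma,\infty) \;=\; \lim_{z\to\infty}\Lambda_-(\gamma,z).
\]
Here $\kappa(\gamma)>0$, since the univalent map produced in the proof of Theorem \ref{T:Lambda-at-infinity} has positive derivative at the origin, so the ratio is well defined. Each term in the limit satisfies $\Lambda_-(\gamma,z)\ge 1$ by Theorem \ref{T:Lambda>=1}, part (1), and passing to the limit yields $\Lambda(\gamma,\infty)\ge 1$, which rearranges at once to $\sigma(\gamma)\ge 2\pi\kappa(\gamma)$.

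For the equality statement one direction is immediate: if $\gamma$ is a circle, then Proposition \ref{P:Lambda-on-circles=1} gives $\Lambda(\gamma,\cdot)\equiv 1$, hence $\Lambda(\gamma,\infty)=1$ and $\sigma(\gamma)=2\pi\kappa(\gamma)$. Conversely, suppose equality holds, so $\Lambda(\gamma,\infty)=1$. To invoke the rigidity of Theorem \ref{T:Lambda>=1}, part (2) — which is stated for a point of $\Omega_\pm$ — I would first transport $\infty$ to a finite point using Möbius invariance: fix $p\in\Omega_+$ and set $\Phi(z)=(z-p)^{-1}$, whose pole lies off $\gamma$. Then $\Phi(\gamma)$ is again a bounded simple closed Lipschitz curve, the point $0=\Phi(\infty)$ lies in its bounded complementary component, and Theorem \ref{T:Lambda-Proj-Inv} gives $\Lambda(\Phi(\gamma),0)=\Lambda(\gamma,\infty)=1$. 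Applying Theorem \ref{T:Lambda>=1}, part (2), at this finite point shows $\Phi(\gamma)$ is a circle; since Möbius transformations carry circles to circles and $\gamma$ is bounded, $\gamma$ itself is a circle.

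The only genuine subtlety is this transport step. Because $\Lambda_-(\gamma,\cdot)$ is defined only on $\Omega_-\cut\{\infty\}$, equality at $\infty$ reaches us as a limiting value rather than as a minimum attained at a finite point, so the rigidity theorem cannot be applied verbatim at $z=\infty$. Conjugating by $\Phi$ repairs this by relocating the extremal point into the finite part of the plane, where Theorem \ref{T:Lambda>=1}, part (2), applies directly; every other step is a direct substitution.
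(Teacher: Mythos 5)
Your proof is correct and takes essentially the same route as the paper: the paper's proof is the one-line observation that Theorems \ref{T:Lambda-at-infinity} and \ref{T:Lambda>=1} combine to give $\Lambda(\gamma,\infty)=\sqrt{\sigma(\gamma)/(2\pi\kappa(\gamma))}\ge 1$ with equality iff $\gamma$ is a circle. The Möbius transport of $\infty$ to a finite point that you carry out explicitly is not a different idea but precisely the reduction already built into the paper's proof of Theorem \ref{T:Lambda>=1} (which begins ``we may assume $z\in\Omega_+$, thanks to Möbius invariance''), so the paper is entitled to invoke that theorem directly at $z=\infty$.
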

\begin{proof}
Combining Theorems \ref{T:Lambda-at-infinity} and \ref{T:Lambda>=1}, we see that $\Lambda(\gamma,\infty) = \sqrt{\frac{\sigma(\gamma)}{2\pi \kappa(\gamma)}} \ge 1$, and that equality holds if and only if $\gamma$ is a circle.
\end{proof}

\begin{remark}
An estimate due to Ahlfors and Beurling gives a lower bound on the analytic capacity of a simple closed curve in terms of area enclosed (see, e.g., \cite[Chapter 5.3]{Ransford1995}):
Let $\gamma$ be a simple closed planar curve enclosing an area of $A(\gamma)$.
Then
\begin{equation}\label{E:Ahlfors-Beurling-ineq-2}
\kappa(\gamma) \ge \sqrt{A(\gamma)/\pi},
\end{equation}
with equality holding if and only if $\gamma$ is a circle.
Combining \eqref{E:Upper-bound-on-capacity} with \eqref{E:Ahlfors-Beurling-ineq-2} yields the isoperimetric inequality $\sigma(\gamma)^2 \ge 4\pi A(\gamma)$.
See \cite{GameKhav1989} for another proof of the isoperimetric inequality stemming from the Ahlfors-Beurling estimate.
\hfill $\lozenge$
\end{remark}


\section{The behavior of $\Lambda(\gamma,z)$ at the boundary}\label{S:Berezin-Transform}

Our goal here is to prove the following result, which confirms part \ref{II:Lambda-cont-at-curve} of Theorem \ref{T:intro-3-props-of-Lambda}.

\begin{theorem}\label{T:Lambda-continuinty-at-boundary}
Let $\gamma$ be a simple closed $C^1$ curve in the Riemann sphere.
Then the function $z \mapsto \Lambda(\gamma,z)$ is continuous on all of $\hat{\C}$.
In particular, if $\zeta_0 \in \gamma$, then $\lim_{z \to \zeta_0} \Lambda(\gamma,z) = 1$.
\end{theorem}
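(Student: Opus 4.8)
The plan is to reduce the whole statement to the single boundary limit $\lim_{z\to\zeta_0}\Lambda(\gamma,z)=1$ for $\zeta_0\in\gamma$. Continuity of $\Lambda(\gamma,\cdot)$ on the open sets $\Omega_\pm\setminus\{\infty\}$ is immediate, since $\norm{C(z,\cdot)}_{L^2(\gamma)}^2=\tfrac1{4\pi^2}\int_\gamma|\zeta-z|^{-2}\,d\sigma$ and $S_\pm(z,z)>0$ both depend continuously on $z$ away from $\gamma$; continuity at $\infty$ is Theorem \ref{T:Lambda-at-infinity}; and by the Möbius invariance of Theorem \ref{T:Lambda-Proj-Inv} we may assume $\gamma$ is a bounded planar curve. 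The engine is the Berezin-type identity of Proposition \ref{P:concat-Berezin-comps}, which I would use in the normalized form
\[
\Lambda(\gamma,z)^2 = 1 + \norm{\bm{A}\,k_z}_{L^2(\gamma)}^2,\qquad k_z := \frac{S_\pm(\cdot,z)}{\sqrt{S_\pm(z,z)}},
\]
where $\bm{A}=\bm{C}_\pm-\bm{C}_\pm^*$ is the Kerzman--Stein operator and $k_z$ is the (unit) normalized Szeg\H{o} kernel in $\ch_\pm^2(\gamma)$.

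To recover this identity I would observe that $u_z:=\ol{C_\pm(z,\cdot)}\in L^2(\gamma)$ represents point-evaluation-after-Cauchy: for all $g$, $\langle g,u_z\rangle=\bm{C}_\pm g(z)=\langle \bm{C}_\pm g, S_\pm(\cdot,z)\rangle=\langle g,\bm{C}_\pm^* S_\pm(\cdot,z)\rangle$, so $u_z=\bm{C}_\pm^* S_\pm(\cdot,z)$. Its Szeg\H{o} projection is $S_\pm(\cdot,z)$, with $\norm{S_\pm(\cdot,z)}^2=S_\pm(z,z)$, so an orthogonal decomposition gives $\Lambda(\gamma,z)^2=1+\norm{(I-\bm{S}_\pm)\bm{C}_\pm^* k_z}^2$. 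The projection identities $\bm{S}_\pm\bm{C}_\pm^*=\bm{S}_\pm$ (adjoint of $\bm{C}_\pm\bm{S}_\pm=\bm{S}_\pm$) and $\bm{C}_\pm k_z=k_z$ (valid since $k_z\in\ch_\pm^2(\gamma)$) then collapse the right side: $(I-\bm{S}_\pm)\bm{C}_\pm^* k_z = \bm{C}_\pm^* k_z - k_z = -(\bm{C}_\pm-\bm{C}_\pm^*)k_z = -\bm{A}k_z$, yielding the boxed formula. The only care needed here is bookkeeping with the principal-value boundary operators and their adjoints, all licensed by \cite{CMM82}.

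The two remaining ingredients are (i) $k_z\rightharpoonup 0$ weakly in $L^2(\gamma)$ as $z\to\zeta_0\in\gamma$, and (ii) compactness of $\bm{A}$. For (i), note $\norm{k_z}=1$ and $\langle g,k_z\rangle=0$ whenever $g\perp\ch_\pm^2(\gamma)$, while for $g$ in the dense subspace $A^\alpha(\ol{\Omega_\pm})$ of Remark \ref{R:density} one has $\langle g,k_z\rangle = g(z)/\sqrt{S_\pm(z,z)}\to 0$ because $S_\pm(z,z)\to\infty$. That blow-up I would extract cheaply from the Koebe distortion theorem: writing the Riemann map $g:\D\to\Omega_\pm$ (after a preliminary Möbius move in the exterior case so the target is bounded) and using \eqref{E:SzegoTransLaw} and \eqref{E:Szego-on-disc}, one gets $S_\pm(z,z)^{-1}=2\pi(1-|w|^2)|g'(w)|\le 8\pi\,\mathrm{dist}(z,\gamma)\to 0$. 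The uniform bound $\norm{k_z}=1$ together with density on this dense set forces weak convergence to $0$.

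For (ii) the $C^1$ hypothesis enters, and this is where the real work lives: for smooth $\gamma$ the compactness of $\bm{A}$ is Kerzman--Stein \cite{KerSte78a}, and for merely $C^1$ curves it is the theorem of Lanzani \cite{Lan99}, which I would invoke as a black box (it also covers the exterior, since $\bm{A}_-=-\bm{A}_+$ by Plemelj). Granting it, a compact operator carries the weakly null family $k_z$ to a norm-null one, so $\norm{\bm{A}k_z}\to 0$ and the boxed identity gives $\Lambda(\gamma,z)^2\to 1$, i.e. $\Lambda(\gamma,z)\to 1=\Lambda(\gamma,\zeta_0)$. Combined with interior continuity, Theorem \ref{T:Lambda-at-infinity}, and the Möbius reduction, this establishes continuity on all of $\hat{\C}$. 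I expect the main obstacle to be precisely the compactness of $\bm{A}$ in the low-regularity $C^1$ regime; everything else is soft functional analysis once the Berezin identity is in hand.
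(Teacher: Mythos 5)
Your proposal is correct and follows essentially the same route as the paper: your identity $\Lambda(\gamma,z)^2 = 1 + \norm{\bm{A}k_z}^2$ is the paper's Proposition \ref{P:concat-Berezin-comps} rewritten via the anti-self-adjointness of $\bm{A}$ (so that $\langle \bm{A}_\pm^2 s_z^\pm, s_z^\pm\rangle = -\norm{\bm{A}_\pm s_z^\pm}^2$), and your weak-null argument for $k_z$ through density of $A^\alpha(\ol{\Omega_\pm})$ and the Koebe-based Szeg\H{o} blow-up is exactly the paper's Lemma \ref{L:normalized-Szego-tends-weakly-to-0} and Proposition \ref{P:Szego-blow-up}. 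The remaining ingredients --- the kernel relation $\bm{C}_\pm^* S_\pm(\cdot,z) = \ol{C_\pm(z,\cdot)}$, compactness of $\bm{A}$ via Kerzman--Stein/Lanzani taken as a black box, and the M\"obius reduction for curves through $\infty$ --- likewise coincide with the paper's proof.
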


\subsection{Important kernel properties}\label{SS:Szego-blowup}
Let $X \subset \hat{\C}$ be a set and consider $f,g : X \to [0,\infty)$.
We say $f$ and $g$ are {\em comparable} on $X$ and write $f(z) \approx g(z)$, $z \in X$, if there exist constants $C_1,C_2>0$ such that for all $z \in X$,
\begin{equation*}
C_1 f(z) \le g(z) \le C_2 f(z).
\end{equation*}

\begin{proposition}\label{P:Szego-blow-up}
Let $\gamma$ be a simple closed Lipschitz curve in the complex plane and let $\delta(z)$ denote the distance of $z$ to $\gamma$. Then 
$
S_\pm(z,z) \approx \delta(z)^{-1},\;  z \in \Omega_+\cup\{z\in\Omega_-:\delta(z)<\ell\} .
$
\end{proposition}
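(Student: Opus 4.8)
The plan is to prove the two-sided comparison by establishing the upper and lower estimates separately, in each case reducing to the regime where $\delta(z)$ is small. Indeed, on the portion of the region $\Omega_+\cup\{z\in\Omega_-:\delta(z)<\ell\}$ where $\delta(z)\ge c_0$ for a fixed small $c_0>0$, the relevant $z$ stay bounded, bounded away from $\gamma$, and (in the exterior, thanks to $\delta(z)<\ell$) bounded away from $\infty$; there $S_\pm(z,z)$ is continuous and strictly positive by Remark \ref{R:positivity-of-S(z,z)}, so both $S_\pm(z,z)$ and $\delta(z)^{-1}$ lie between fixed positive constants and comparability is automatic. Thus it suffices to treat $z$ near $\gamma$.

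For the upper estimate I would use domain monotonicity. Since $z$ lies in the open domain $\Omega_\pm$ at distance $\delta(z)$ from $\gamma$, the disc $D(z,\delta(z))$ is contained in $\Omega_\pm$. The transformation law \eqref{E:SzegoTransLaw} together with the disc kernel \eqref{E:Szego-on-disc} gives $S_{D(z,\delta(z))}(z,z)=\frac{1}{2\pi\,\delta(z)}$, and Proposition \ref{P:Szego-containment-monotonicity} then yields
\[
S_\pm(z,z)\le S_{D(z,\delta(z))}(z,z)=\frac{1}{2\pi\,\delta(z)},
\]
which supplies the constant $C_2=\tfrac{1}{2\pi}$ and uses nothing beyond the inscribed disc.

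For the lower estimate I would combine the extremal characterization of the Szeg\H{o} kernel with an explicit test function. Let $\zeta_0\in\gamma$ be a nearest boundary point to $z$, so $|z-\zeta_0|=\delta(z)$, and assume $\delta(z)<r$ with $r$ as in the uniform cone condition. Using that condition, choose $z^*$ in the \emph{opposite} non-tangential approach region $\Gamma_\mp(\zeta_0)$ with $|z^*-\zeta_0|=\delta(z)$; the defining inequality $|z^*-\zeta_0|\le(1+\beta)\,\mathrm{dist}(z^*,\gamma)$ then gives $\delta(z^*)\ge\frac{1}{1+\beta}\,\delta(z)$, while the triangle inequality gives $|z-z^*|\le 2\,\delta(z)$. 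Set $f_z(\zeta)=(\zeta-z^*)^{-1}$. Since $z^*\notin\overline{\Omega_\pm}$ (and $f_z(\infty)=0$ in the exterior case), $f_z$ is holomorphic across $\overline{\Omega_\pm}$ and lies in $\ch^2_\pm(\gamma)$, so the reproducing property and Cauchy--Schwarz (Proposition \ref{P:Szego-diagonal-extremal-char}) give
\[
S_\pm(z,z)\ge\frac{|f_z(z)|^2}{\norm{f_z}_{L^2(\gamma)}^2}=\frac{|z-z^*|^{-2}}{\int_\gamma|\zeta-z^*|^{-2}\,d\sigma(\zeta)}\ge\frac{(2\delta(z))^{-2}}{\int_\gamma|\zeta-z^*|^{-2}\,d\sigma(\zeta)}.
\]

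The main obstacle is the uniform bound
\[
\int_\gamma\frac{d\sigma(\zeta)}{|\zeta-z^*|^2}\le\frac{C}{\delta(z^*)},
\]
with $C$ depending only on the Lipschitz character of $\gamma$. I would prove it by representing $\gamma$ near its nearest point to $z^*$ as a single Lipschitz graph: the integral over a fixed neighborhood of that point is controlled by $\int\frac{ds}{\delta(z^*)^2+c\,s^2}\lesssim\delta(z^*)^{-1}$, while the remainder of $\gamma$ stays a fixed distance from $z^*$ and contributes a bounded amount, hence $\lesssim\delta(z^*)^{-1}$ once $\delta(z^*)$ is small; a dyadic decomposition using the upper Ahlfors regularity $\sigma(\gamma\cap D(p,\rho))\lesssim\rho$ of a Lipschitz curve packages this cleanly. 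Combining this with $\delta(z^*)\ge\frac{1}{1+\beta}\delta(z)$ yields $S_\pm(z,z)\ge c_1\,\delta(z)^{-1}$, completing the lower bound. Finally, the restriction $\delta(z)<\ell$ in the exterior is genuinely necessary: as $z\to\infty$ one has $\delta(z)\approx|z|$ but $S_-(z,z)\approx|z|^{-2}$ (cf. the computation in Theorem \ref{T:Lambda-at-infinity}), so the comparison must break down for large $\delta$, and the uniformity of the constants above relies on staying inside the fixed scale $\ell$.
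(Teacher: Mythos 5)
Your proof is correct, and while your upper bound coincides with the paper's (inscribed disc $D(z,\delta(z))$ plus the monotonicity of Proposition \ref{P:Szego-containment-monotonicity}), your lower bound takes a genuinely different route. The paper argues conformally: for $z\in\Omega_+$ it writes $S_+(z,z)=\Phi'(z)/(2\pi)$ via the Riemann map $\Phi$ normalized at $z$ and applies the Koebe one-quarter theorem to $\Phi^{-1}$, giving $S_+(z,z)\ge \tfrac{1}{8\pi}\delta(z)^{-1}$ at \emph{every} point of $\Omega_+$, with an explicit constant and no compactness reduction; the exterior collar is then handled by the inversion $\eta(w)=1/(w-z_0)$, which is bi-Lipschitz on $\{z\in\Omega_-:\delta(z)<\ell\}$, transferring the interior estimate through the transformation law \eqref{E:SzegoTransLaw}. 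You instead test the reproducing property against $f_z(\zeta)=(\zeta-z^*)^{-1}$, with $z^*$ a reflected point across $\gamma$ supplied by the uniform cone condition, and control $\norm{f_z}^2$ by the bound $\int_\gamma|\zeta-z^*|^{-2}\,d\sigma\lesssim\delta(z^*)^{-1}$ coming from upper Ahlfors regularity of Lipschitz curves. (Incidentally, the inequality you actually use, $S_\pm(z,z)\ge|f(z)|^2/\norm{f}^2$, is the correct form of the extremal property; the display in Proposition \ref{P:Szego-diagonal-extremal-char} is missing a square.) Your route treats interior and exterior symmetrically in one stroke and avoids conformal mapping entirely, so it is closer in spirit to arguments that survive in higher dimensions (the paper's stated motivation) or in settings without Riemann-map control; the price is the compactness reduction away from $\gamma$ (hence non-explicit constants) and reliance on two standard-but-unproved facts about Lipschitz curves, namely the cone condition and the uniform kernel integral estimate, the latter of which you correctly sketch via dyadic decomposition. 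Your closing observation that the truncation $\delta(z)<\ell$ is genuinely needed, because $S_-(z,z)\approx|z|^{-2}$ while $\delta(z)\approx|z|$ as $z\to\infty$, is also correct and matches the computation in Theorem \ref{T:Lambda-at-infinity}.
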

\begin{proof}
Let $D(z,\delta)$ be the disc centered at $z$ of radius $\delta(z) = \delta$.
The Szeg\H{o} kernel of this disc is calculated using the unit disc formula \eqref{E:Szego-on-disc} and an appropriate affine map in the transformation law \eqref{E:SzegoTransLaw}.
From Szeg\H{o} kernel monotonicity in Proposition \ref{P:Szego-containment-monotonicity} we now obtain
\begin{equation}\label{E:E:Szego-upper-bound}
S_\pm(z,z) \le S_{D(z,\delta)}(z,z) = \frac{1}{2\pi\delta(z)}.
\end{equation}

For the other direction, consider first a point $z\in\Omega_+$ and the Riemann map $\Phi:\Omega_+\to\D$ with $\Phi(z)=0$, $\Phi'(z)>0$.  Using \eqref{E:Szego-on-disc} and \eqref{E:SzegoTransLaw} again we have $S_+(z,z)=\frac{\Phi'(z)}{2\pi}$.  Applying the (rescaled) Koebe one-quarter theorem \cite[Theorem 5.3.3]{{Ransford1995}} to $\Phi^{-1}$ we obtain
\begin{equation*}
\delta(z)\ge \frac14\left(\Phi^{-1}\right)'(0)=\frac{1}{4\,\Phi'(z)},
\end{equation*}
and so
\begin{equation}\label{E:E:Szego-lower-bound+}
 S_+(z,z)=\frac{\Phi'(z)}{2\pi} \ge \frac{1}{8\pi\delta(z)}.  
\end{equation}
Combining \eqref{E:E:Szego-upper-bound} with \eqref{E:E:Szego-lower-bound+} we have $S_+(z,z) \approx \delta(z)^{-1},\;  z \in \Omega_+.$

To treat $z\in\Omega_-$ close to $\gamma$, pick $z_0\in\Omega_+$ and $\ell>0$ so that the map $\eta(z):=\frac{1}{z-z_0}$ satisfies
\begin{itemize}
    \item $\eta$ is a bi-Lipschitz map from $U_\ell:=\{z\in\Omega_-:\delta(z)<\ell\}$ to $\eta\left(U_\ell\right)$
\end{itemize}
and
\begin{itemize}
    \item $|\eta'(z)|\approx 1$ on $U_\ell$.
\end{itemize}
Setting $\tilde\delta(w)$ to be the distance from $w$ to $\eta(\gamma)$ we have from our work above (with $\eta(\Omega_-)$ replacing $\Omega_+$) along the transformation law \eqref{E:SzegoTransLaw} that
\begin{align*}
 S_-(z,z) &\approx S_{\eta(\Omega_-)}  (\eta(z),\eta(z))\\
 &\approx \tilde\delta(\eta(z))\\
 &\approx \delta(z)
\end{align*}
for $z\in U_\ell$, completing the proof of the proposition.
\end{proof}

\begin{proposition}\label{P:kernel-adjoint-relationship}
Let $\bm{T}_1,\bm{T}_2$ be bounded projection operators from $L^2(\gamma)$ onto $\ch^2_\pm(\gamma)$, each represented by an integral kernel $K_j:\Omega_\pm \times \gamma \to \C$, such that for $f \in L^2(\gamma)$ and $z \in \Omega_\pm$,
\begin{equation*}
\bm{T}_j f(z) = \int_\gamma f(\zeta)K_j(z,\zeta)\,d\sigma(\zeta).
\end{equation*}
Additionally, assume $K_j(z,\cdot) \in L^2(\gamma)$ for $z \in \Omega_\pm$.
Then the following holds for a.e. $\zeta \in \gamma$:
\begin{equation}\label{E:kernel-adjoint-relationship}
\bm{T}_2^*\left( \ol{K_1(z,\cdot)} \right)(\zeta) = \ol{K_2(z,\zeta)}.
\end{equation}
\end{proposition}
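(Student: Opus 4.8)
The plan is to leverage the single structural hypothesis that both $\bm{T}_1$ and $\bm{T}_2$ are projections with the \emph{same} range $\ch^2_\pm(\gamma)$, turning the asserted kernel identity into a short reproducing-property computation. First I would rewrite the defining kernel representation in terms of the Hermitian inner product of \eqref{E:bilinear-pairing-def}. Since $\overline{\ol{K_j(z,\zeta)}} = K_j(z,\zeta)$, the hypothesis $\bm{T}_j f(z) = \int_\gamma f(\zeta) K_j(z,\zeta)\,d\sigma(\zeta)$ is equivalent to
\[
\bm{T}_j f(z) = \langle f, \ol{K_j(z,\cdot)} \rangle, \qquad f \in L^2(\gamma),\ z \in \Omega_\pm.
\]
The assumption $K_j(z,\cdot) \in L^2(\gamma)$ guarantees these pairings converge, and boundedness of $\bm{T}_j$ guarantees the adjoint $\bm{T}_2^*$ exists.

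Next I would test the claimed equality against an arbitrary $h \in L^2(\gamma)$. Using the defining property of the adjoint and then the first step,
\[
\langle h, \bm{T}_2^*(\ol{K_1(z,\cdot)}) \rangle = \langle \bm{T}_2 h, \ol{K_1(z,\cdot)} \rangle = \bm{T}_1(\bm{T}_2 h)(z).
\]
Here the common-range hypothesis does the real work: $\bm{T}_2 h \in \ch^2_\pm(\gamma)$ lies in the range of $\bm{T}_1$, and since $\bm{T}_1$ is a projection onto that space it fixes $\bm{T}_2 h$, so $\bm{T}_1(\bm{T}_2 h)(z) = (\bm{T}_2 h)(z)$. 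Applying the first step once more, now to $\bm{T}_2$, gives $(\bm{T}_2 h)(z) = \langle h, \ol{K_2(z,\cdot)} \rangle$.

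Chaining these identities produces $\langle h, \bm{T}_2^*(\ol{K_1(z,\cdot)}) \rangle = \langle h, \ol{K_2(z,\cdot)} \rangle$ for every $h \in L^2(\gamma)$; nondegeneracy of the inner product then forces $\bm{T}_2^*(\ol{K_1(z,\cdot)}) = \ol{K_2(z,\cdot)}$ as elements of $L^2(\gamma)$, i.e.\ for almost every $\zeta \in \gamma$, which is exactly \eqref{E:kernel-adjoint-relationship}.

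I do not anticipate a genuine analytic obstacle: the argument is a purely formal duality manipulation, and neither smoothness of $\gamma$ nor orthogonality of the projections is needed. The only points demanding care are bookkeeping of the conjugations (the kernels enter the Hermitian pairing as $\ol{K_j(z,\cdot)}$, not $K_j(z,\cdot)$) and, most crucially, not losing sight of the fact that $\operatorname{ran}\bm{T}_1 = \operatorname{ran}\bm{T}_2 = \ch^2_\pm(\gamma)$ — this shared range is precisely what licenses the collapse $\bm{T}_1\bm{T}_2 h = \bm{T}_2 h$, and the statement would fail without it.
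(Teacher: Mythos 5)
Your proof is correct and follows essentially the same route as the paper's: both rewrite the kernel representation as $\bm{T}_j f(z) = \langle f, \ol{K_j(z,\cdot)}\rangle$, pass $\bm{T}_2$ across the inner product via the adjoint, collapse $\bm{T}_1\bm{T}_2 h = \bm{T}_2 h$ using the shared range $\ch^2_\pm(\gamma)$, and conclude by nondegeneracy of the pairing on $L^2(\gamma)$. Your closing remark that the common-range hypothesis is the crux matches exactly the role it plays in the paper's argument.
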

\begin{proof}
Since $\bm{T}_2$ is bounded on $L^2(\gamma)$, there is a corresponding bounded adjoint $\bm{T}_2^*$.
By assumption, $\ol{K_1(z,\cdot)} \in L^2(\gamma)$ and so $\bm{T}^*_2\left( \ol{K_1(z,\cdot)} \right) \in L^2(\gamma)$.
Thus for $f \in L^2(\gamma)$,
\begin{equation}\label{E:Gen-Proj-1}
\left\langle f, \bm{T}^*_2\left( \ol{K_1(z,\cdot)} \right) \right\rangle = \left\langle \bm{T}_2(f),  \ol{K_1(z,\cdot)} \right\rangle 
= \bm{T}_1 \circ \bm{T}_2 f(z)
= \bm{T}_2 f(z),
\end{equation}
since $\bm{T}_2 f \in \ch^2_\pm(\gamma)$ and $\bm{T}_1$ is a projection onto $\ch_\pm^2(\gamma)$.
On the other hand,
\begin{equation}\label{E:Gen-Proj-2}
\left\langle f, \ol{K_2(z,\cdot)} \right\rangle = \bm{T}_2 f(z),
\end{equation}
by definition.
Equating \eqref{E:Gen-Proj-1} and \eqref{E:Gen-Proj-2} we see that $\bm{T}_2^*\left( \ol{K_1(z,\cdot)} \right) - \ol{K_2(z,\cdot)} \in L^2(\gamma)$ is orthogonal to all of $L^2(\gamma)$, and is therefore almost everywhere zero.
\end{proof}

\begin{corollary}
The Cauchy kernel and Szeg\H{o} kernels of $\ch^2_\pm(\gamma)$ are related as follows:
\begin{subequations}
\begin{alignat}{2}
&\bm{C}_\pm^*\left(S_\pm(\cdot,z)\right)(\zeta) = \ol{C_\pm(z,\zeta)}, \qquad &z \in \Omega_\pm,\quad \zeta \in \gamma, \label{E:Cauchy-adjoint-Szego} \\
&\bm{S}_\pm \left(\ol{C_\pm(z,\cdot)}\right)(\zeta) = S_\pm(\zeta,z), \qquad &z \in \Omega_\pm, \quad \zeta \in \gamma. \label{E:Szego-adjoint-Cauchy}
\end{alignat}
\end{subequations}
\end{corollary}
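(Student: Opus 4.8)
The plan is to derive both identities as immediate consequences of Proposition \ref{P:kernel-adjoint-relationship}, which already isolates the abstract mechanism at work: whenever two bounded projections onto $\ch^2_\pm(\gamma)$ are represented by square-integrable kernel sections, applying the adjoint of one to the conjugate kernel section of the other reproduces the conjugate of its own kernel. The only preliminary work is to confirm that both operators in play satisfy the hypotheses of that proposition. The Cauchy transform $\bm{C}_\pm$ is a bounded projection onto $\ch^2_\pm(\gamma)$ by the Coifman--McIntosh--Meyer theorem together with Cauchy's integral formula, and for fixed $z \in \Omega_\pm$ its kernel section $C_\pm(z,\cdot)$ is bounded on $\gamma$ (the denominator $\zeta - z$ stays away from zero), hence lies in $L^2(\gamma)$; this finiteness is in any case already implicit in the definition of $\Lambda$. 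The Szeg\H{o} projection $\bm{S}_\pm$ is an orthogonal projection onto the same space, so it is bounded, self-adjoint, and its section $S_\pm(\cdot,z) \in \ch^2_\pm(\gamma) \subset L^2(\gamma)$.

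For \eqref{E:Cauchy-adjoint-Szego} I would apply Proposition \ref{P:kernel-adjoint-relationship} with $\bm{T}_1 = \bm{S}_\pm$ (kernel $S_\pm$) and $\bm{T}_2 = \bm{C}_\pm$ (kernel $C_\pm$). The conclusion \eqref{E:kernel-adjoint-relationship} then reads $\bm{C}_\pm^*(\ol{S_\pm(z,\cdot)})(\zeta) = \ol{C_\pm(z,\zeta)}$, and invoking the conjugate symmetry $\ol{S_\pm(z,\cdot)} = S_\pm(\cdot,z)$ of the Szeg\H{o} kernel rewrites the argument of $\bm{C}_\pm^*$ into the stated form. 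For \eqref{E:Szego-adjoint-Cauchy} I would instead take $\bm{T}_1 = \bm{C}_\pm$ and $\bm{T}_2 = \bm{S}_\pm$, whereupon \eqref{E:kernel-adjoint-relationship} gives $\bm{S}_\pm^*(\ol{C_\pm(z,\cdot)})(\zeta) = \ol{S_\pm(z,\zeta)}$; using the self-adjointness $\bm{S}_\pm^* = \bm{S}_\pm$ on the left and the conjugate symmetry $\ol{S_\pm(z,\zeta)} = S_\pm(\zeta,z)$ on the right yields exactly the claimed identity.

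There is no substantial obstacle here, since the genuine content has been packaged into the preceding proposition; the only points requiring care are bookkeeping ones, namely keeping track of which variable is conjugated, correctly matching the roles of $\bm{T}_1$ and $\bm{T}_2$ in each of the two applications, and confirming the $L^2$-membership of the two kernel sections so that Proposition \ref{P:kernel-adjoint-relationship} applies verbatim.
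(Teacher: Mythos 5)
Your proof is correct and is essentially identical to the paper's own argument: the paper likewise applies Proposition \ref{P:kernel-adjoint-relationship} with $\bm{T}_1 = \bm{S}_\pm$, $\bm{T}_2 = \bm{C}_\pm$ for \eqref{E:Cauchy-adjoint-Szego}, then switches the roles and uses self-adjointness of $\bm{S}_\pm$ for \eqref{E:Szego-adjoint-Cauchy}. Your additional verification of the $L^2$-membership of the kernel sections and the explicit use of conjugate symmetry are sound bookkeeping steps that the paper leaves implicit.
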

\begin{proof}
Apply Proposition \ref{P:kernel-adjoint-relationship} with $\bm{T}_1 = \bm{S}_\pm$ and $\bm{T}_2 = \bm{C}_\pm$ for \eqref{E:Cauchy-adjoint-Szego}.
Switch the roles of the operators and use the self-adjointness of the Szeg\H{o} projection to deduce \eqref{E:Szego-adjoint-Cauchy}.
\end{proof}

\subsection{The Berezin transform and the Kerzman-Stein operator}\label{SS:Lambda-at-boundary}


\subsubsection{The Berezin transform}\label{SSS:Berezin-trans}

Let $\gamma$ be a simple closed Lipschitz curve in the plane oriented counterclockwise. 
Given  $z \in \Omega_\pm\setminus \{\infty\}$, define the unit vector $s^\pm_z \in \ch_\pm^2(\gamma) \subset L^2(\gamma)$ by normalizing the Szeg\H{o} kernel as follows:
\begin{equation}\label{E:z-normalized-Szego}
s_z^\pm(\zeta) := \frac{S_\pm(\zeta,z)}{\sqrt{S_\pm(z,z)}}.
\end{equation}

\begin{lemma}\label{L:normalized-Szego-tends-weakly-to-0}
Let $\gamma$ be a simple closed Lipschitz curve in the plane and $z \in \C\backslash\{\gamma\}$.
The unit vectors $s^\pm_z \in \ch^2_\pm(\gamma)$ tend weakly to $0$ as $z$ approaches  $\gamma$.
\end{lemma}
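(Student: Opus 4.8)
The plan is to reduce this weak-convergence statement to the reproducing property of the Szeg\H{o} kernel and then exploit the boundary blow-up of $S_\pm(z,z)$ recorded in Proposition~\ref{P:Szego-blow-up}. First I would confirm that each $s_z^\pm$ is genuinely a unit vector: applying the reproducing property to $f=S_\pm(\cdot,z)\in\ch_\pm^2(\gamma)$ gives $\langle S_\pm(\cdot,z),S_\pm(\cdot,z)\rangle = S_\pm(z,z)$, so
\[
\norm{s_z^\pm}_{L^2(\gamma)}^2 = \frac{\langle S_\pm(\cdot,z),S_\pm(\cdot,z)\rangle}{S_\pm(z,z)} = 1,
\]
and the family is uniformly bounded in norm. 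Next, for any $g\in\ch_\pm^2(\gamma)$ the reproducing property again yields
\[
\langle g, s_z^\pm\rangle = \frac{\langle g, S_\pm(\cdot,z)\rangle}{\sqrt{S_\pm(z,z)}} = \frac{g(z)}{\sqrt{S_\pm(z,z)}}.
\]
Thus showing $s_z^\pm\to 0$ weakly is equivalent to showing that $g(z)/\sqrt{S_\pm(z,z)}\to 0$ as $z\to\gamma$, for every $g$ in the Hardy space.

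Because the $s_z^\pm$ are uniformly bounded, a standard approximation argument (split $\langle g,s_z^\pm\rangle$ into a piece tested against a nearby dense element plus an error controlled by $\norm{s_z^\pm}=1$) shows it suffices to verify this convergence on a dense subset of $\ch_\pm^2(\gamma)$. I would take the dense subspace $A^\alpha(\overline{\Omega_\pm})$ supplied by Remark~\ref{R:density}: every such $g$ extends continuously to $\overline{\Omega_\pm}$, hence is bounded on a neighborhood of $\gamma$, so the numerator $g(z)$ remains bounded as $z\to\gamma$.

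The denominator is then controlled by Proposition~\ref{P:Szego-blow-up}. Since $S_\pm(z,z)\approx\delta(z)^{-1}$ with $\delta(z)=\mathrm{dist}(z,\gamma)\to 0$, we get $\sqrt{S_\pm(z,z)}\to\infty$, whence $g(z)/\sqrt{S_\pm(z,z)}\to 0$ for each $g\in A^\alpha(\overline{\Omega_\pm})$. The density/uniform-boundedness argument then promotes the convergence to all of $\ch_\pm^2(\gamma)$, and in fact to all of $L^2(\gamma)$: for $g\in L^2(\gamma)$ one has $\langle s_z^\pm,g\rangle=\langle s_z^\pm,\bm{S}_\pm g\rangle$ because $s_z^\pm\in\ch_\pm^2(\gamma)$ and $\bm{S}_\pm$ is the orthogonal projection.

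The one ingredient carrying genuine content is the boundary blow-up of the diagonal Szeg\H{o} kernel; this is also precisely where the hypothesis $z\to\gamma$ (rather than $z\to\infty$, where $S_-$ degenerates instead) enters. Everything else is formal bookkeeping with the reproducing property and a routine density argument, so I expect no serious obstacle beyond invoking Proposition~\ref{P:Szego-blow-up} correctly near $\gamma$ from both the interior and exterior sides.
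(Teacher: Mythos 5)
Your proposal is correct and follows essentially the same route as the paper's own proof: reduce to Hardy-space test functions via the orthogonal projection $\bm{S}_\pm$, use the reproducing property to write $\langle g, s_z^\pm\rangle = g(z)/\sqrt{S_\pm(z,z)}$, approximate by the dense subspace $A^\alpha(\overline{\Omega_\pm})$ of Remark~\ref{R:density} (with the error controlled by $\norm{s_z^\pm}=1$), and kill the dense-subspace terms using the blow-up $S_\pm(z,z)\approx\delta(z)^{-1}$ from Proposition~\ref{P:Szego-blow-up}. The only cosmetic difference is that the paper writes out the approximation estimate explicitly through the reproducing-kernel bound $|f(z)-f_n(z)|\le S_\pm(z,z)^{1/2}\norm{f-f_n}$, which is exactly your ``standard approximation argument.''
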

\begin{proof} 
If $f \in L^2(\gamma)$ is perpendicular to $\ch_{\pm}^2(\gamma)$, observe that 
\begin{equation*}\langle f,s_z^\pm \rangle = S_\pm(z,z)^{-1/2} \bm{S}_\pm f(z) = 0.\end{equation*}
It is therefore sufficient to test only against functions $f$ in the Hardy space.

If $f \in \ch_{\pm}^2(\gamma)$ and $z \in \Omega_\pm \backslash \{\infty\}$, the Szeg\H{o} reproducing property gives 
\begin{equation}\label{E:tending-weakly-1}
\langle f, s_z^\pm \rangle = S_\pm(z,z)^{-1/2}f(z).
\end{equation}
By Remark \ref{R:density} the subspace $A^\alpha(\Omega_\pm) = \co(\Omega_\pm) \cap C^\alpha(\ol{\Omega_\pm})$ is dense in $\ch_\pm^2(\gamma)$, so we may choose a sequence of functions $\{f_n\} \subset A^\alpha(\Omega_\pm)$ tending to $f$ in the $L^2(\gamma)$-norm.
Then
\begin{align*}
|f(z) - f_n(z)| = \left| \int_\gamma S_\pm(z,\zeta)(f(\zeta) - f_n(\zeta))\,d\sigma(\zeta)  \right| \le S_\pm(z,z)^{1/2} \norm{f-f_n},
\end{align*}
which implies $|S_\pm(z,z)^{-1/2} f(z) - S_\pm(z,z)^{-1/2} f_n(z)| \le \norm{f-f_n}$.
Thus,
\begin{align*}
|S_\pm(z,z)^{-1/2}f(z)| &\le |S_\pm(z,z)^{-1/2}f(z) - S_\pm(z,z)^{-1/2}f_n(z)| + |S_\pm(z,z)^{-1/2}f_n(z)|  \notag \\
&\le \norm{f-f_n} + S_\pm(z,z)^{-1/2}|f_n(z)|.
\end{align*}

Now given $\epsilon >0$, we may choose $N$ large enough so that $\norm{f-f_N} < \frac{\epsilon}{2}$.
Since $f_N \in C^\alpha(\ol{\Omega_\pm})$, $|f_N|$ assumes a maximum value on the closure.  
And since $S_\pm(z,z) \approx \delta(z)^{-1}$ (Proposition \ref{P:Szego-blow-up}), $z$ can be taken sufficiently close to $\gamma$ to ensure that
\begin{equation*}
S_\pm(z,z)^{-1/2}|f_N(z)| \le S_\pm(z,z)^{-1/2} \,\sup|f_N| < \frac{\epsilon}{2}.
\end{equation*}
Now combining the above inequalities with \eqref{E:tending-weakly-1}, we have
\begin{equation*}
|\langle f,s_z^\pm \rangle| = |S_\pm(z,z)^{-1/2} f(z)| < \epsilon
\end{equation*}
for $z$ sufficiently close to $\gamma$.
Since $\epsilon$ was arbitrary, we conclude that $s_z^\pm$ tends weakly to 0 as $z$ is sent to $\gamma$.
\end{proof}

Suppose $\bm{T}$ is a bounded operator on $L^2(\gamma)$.
We define its {\em Berezin transform} to be the function $\wt{\bm{T}}:\Omega_\pm \backslash \{ \infty \} \to \C$ given by the formula
\begin{equation}\label{E:def-Berezin-transform-1}
\wt{\bm{T}}(z) := \big\langle \bm{T} s^\pm_z, s^\pm_z \big\rangle, \qquad z \in \Omega_\pm \backslash \{ \infty \}.
\end{equation}

The Berezin transform is important in the study of Toeplitz and Hankel operators in Bergman and Hardy space settings.
There is an extensive body of literature on this topic; see, e.g., the survey \cite{Zhu2021} and the references therein. 

\begin{lemma}\label{L:Berezin-weak-conv-to 0}
Let $\gamma$ be a simple closed Lipschitz curve and $\bm{T}$ a compact operator on $L^2(\gamma)$.
Then the Berezin transform $\wt{\bm{T}}(z)$ tends to $0$ as $z$ is sent to $\gamma$.
\end{lemma}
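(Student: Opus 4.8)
The plan is to deduce the result from two facts already in hand: the normalized Szeg\H{o} kernels $s_z^\pm$ tend weakly to $0$ as $z \to \gamma$ (Lemma \ref{L:normalized-Szego-tends-weakly-to-0}), and a compact operator carries weakly convergent families to norm-convergent ones. Since $\norm{s_z^\pm} = 1$ for every $z$, the Cauchy--Schwarz inequality gives $|\wt{\bm{T}}(z)| = |\langle \bm{T} s_z^\pm, s_z^\pm \rangle| \le \norm{\bm{T} s_z^\pm}$, so it suffices to prove that $\norm{\bm{T} s_z^\pm} \to 0$ as $z \to \gamma$.

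To establish the latter I would argue by contradiction through sequences, which lets me invoke the sequential characterization of compactness. Suppose $\norm{\bm{T} s_z^\pm}$ does not tend to $0$; then there exist $\epsilon > 0$ and a sequence $z_n \to \gamma$ with $\norm{\bm{T} s_{z_n}^\pm} \ge \epsilon$ for all $n$. The vectors $s_{z_n}^\pm$ are bounded (each of unit norm), so by compactness of $\bm{T}$ a subsequence of $\{\bm{T} s_{z_n}^\pm\}$ converges in the $L^2(\gamma)$-norm to some $g$, necessarily with $\norm{g} \ge \epsilon$. On the other hand, Lemma \ref{L:normalized-Szego-tends-weakly-to-0} gives $s_{z_n}^\pm \rightharpoonup 0$ weakly, and since the bounded operator $\bm{T}$ is continuous for the weak topology, $\bm{T} s_{z_n}^\pm \rightharpoonup 0$ weakly as well. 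Uniqueness of weak limits forces $g = 0$, contradicting $\norm{g} \ge \epsilon$. Hence $\norm{\bm{T} s_z^\pm} \to 0$, and the lemma follows.

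The only point requiring care is the reduction from the continuum limit $z \to \gamma$ to a statement about sequences; this is precisely what the contradiction hypothesis supplies, since a failure of $\norm{\bm{T} s_z^\pm} \to 0$ along the net is witnessed by some sequence $z_n \to \gamma$. I do not anticipate any genuine obstacle here: the real content of the proof is carried by Lemma \ref{L:normalized-Szego-tends-weakly-to-0}, and the remaining ingredient---that compactness upgrades weak convergence to strong convergence---is a standard fact of Hilbert space theory.
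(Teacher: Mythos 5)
Your proof is correct and follows essentially the same route as the paper: both reduce via Cauchy--Schwarz to showing $\norm{\bm{T} s_z^\pm} \to 0$ and then invoke Lemma \ref{L:normalized-Szego-tends-weakly-to-0} together with the fact that compact operators map weakly convergent families to norm-convergent ones. The only difference is that the paper cites this last fact (complete continuity of compact operators) as standard, whereas you prove it in-line by the sequential contradiction argument.
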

\begin{proof}
Since compact operators are completely continuous and $s_z^\pm$ tends weakly to 0 as $z$ is sent to $\gamma$, we have that $\norm{\bm{T} s_z^\pm} \to 0$. 
Now observe that
\begin{equation*}
\big|\wt{\bm{T}}(z)\big| = |\langle \bm{T} s_z^\pm,s_z^\pm \rangle| \le \norm{\bm{T} s_z^\pm} \norm{s_z^\pm} \le \norm{\bm{T} s_z^\pm},
\end{equation*}
which completes the proof.
\end{proof}

Now suppose that $\bm{T}_1$ and $\bm{T}_2$ are bounded operators on  $L^2(\gamma)$. 
We define a function $\sB(\gamma,\bm{T}_1,\bm{T}_2): \C\backslash\{\gamma\} \to \C$ by the formula
\begin{equation}\label{E:def-Berezin-transform-2}
\sB(\gamma,\bm{T}_1,\bm{T}_2)(z) 
:= \begin{dcases}
\big\langle \bm{T}_1 s^+_z, s^+_z \big\rangle,  & z \in \Omega_+, \\
\big\langle \bm{T}_2 s^-_z, s^-_z \big\rangle, & z \in \Omega_- \backslash \{\infty\}.
\end{dcases}
\end{equation}

We refer to $\sB(\gamma,\bm{T}_1,\bm{T}_2)$ as a {\em concatenated Berezin transform}.
This allows the consideration of two different operators on $\Omega_+$ and $\Omega_-$ simultaneously.

\subsubsection{Kerzman-Stein operators}\label{SSS:Kerzman-Stein}

Define the operator
\begin{equation}
\bm{A}_\pm = \bm{C}_\pm - \bm{C}_\pm^*.
\end{equation}
Kerzman and Stein \cite{KerSte78a} showed that the singularities of the Cauchy kernel and its adjoint cancel out as long as the associated curve is smooth.
Lanzani \cite{Lan99} improved the applicability of this result to $C^1$ curves.

\begin{proposition}\label{P:Kerzman-Stein-compact}
Let $\gamma$ be a $C^1$ curve in the complex plane. 
Then $\bm{A}_\pm$ is a compact operator on $L^2(\gamma)$.
\end{proposition}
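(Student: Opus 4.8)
The plan is to show that $\bm{A}_\pm$ is a genuine integral operator whose kernel has no leading singularity, and then to extract compactness from the size of that kernel. First I would combine the boundary formula \eqref{E:Cauchy-transform-boundary} for $\bm{C}_\pm$ with the explicit formula for $\bm{C}_\pm^*$ given in Section \ref{SS:Two-pairings-dual-spaces}. Both operators carry the same free term $\tfrac12\,h(z)$, so upon forming the difference $\bm{A}_\pm=\bm{C}_\pm-\bm{C}_\pm^*$ these cancel and, for a.e. $z\in\gamma$,
\begin{equation*}
\bm{A}_\pm h(z)=\mathrm{P.V.}\!\int_\gamma A_\pm(z,\zeta)\,h(\zeta)\,d\sigma(\zeta),\qquad A_\pm(z,\zeta)=C_\pm(z,\zeta)-\ol{C_\pm(\zeta,z)}.
\end{equation*}
Using the kernel formula \eqref{E:Cauchy-kernel-def} and writing $u=\zeta-z$, this becomes
\begin{equation*}
A_\pm(z,\zeta)=\frac{\pm1}{2\pi i}\left(\frac{T(\zeta)}{\zeta-z}-\frac{\ol{T(z)}}{\,\ol{\zeta-z}\,}\right)=\frac{\pm1}{2\pi i}\cdot\frac{T(\zeta)\,\ol u-\ol{T(z)}\,u}{|u|^2}.
\end{equation*}

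The decisive point is that the numerator vanishes to first order on the diagonal. Parametrizing $\gamma$ by arc length and recording tangent and secant directions as unit complex numbers $T(z)=e^{i\alpha}$, $T(\zeta)=e^{i\beta}$, $u/|u|=e^{i\phi}$, one computes
\begin{equation*}
T(\zeta)\,\ol u-\ol{T(z)}\,u=|u|\left(e^{i(\beta-\phi)}-e^{i(\phi-\alpha)}\right).
\end{equation*}
As $\zeta\to z$ the secant direction converges to the tangent, so $\beta,\phi\to\alpha$ and the bracket tends to $0$; with $\omega_T$ the modulus of continuity of the unit tangent this yields
\begin{equation*}
|A_\pm(z,\zeta)|\le\frac{C\,\omega_T(|\zeta-z|)}{|\zeta-z|}.
\end{equation*}
Because $\gamma$ is $C^1$ we have $\omega_T(t)\to0$, so the $|\zeta-z|^{-1}$ singularities of $C_\pm$ and of its adjoint have genuinely cancelled---this is the Kerzman--Stein observation.

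From this estimate I would deduce compactness in two stages. If $\gamma$ is a little smoother, say $\omega_T(t)\lesssim t^\alpha$ for some $\alpha>0$ (for instance $C^{1,\alpha}$), then $|A_\pm(z,\zeta)|\lesssim|\zeta-z|^{\alpha-1}$ is weakly singular and hence defines a compact operator on $L^2(\gamma)$ (Hilbert--Schmidt once $\alpha>\tfrac12$); this recovers the classical smooth case. For a general $C^1$ curve I would truncate, setting $A_\pm^\epsilon:=A_\pm\cdot\mathbf{1}_{\{|\zeta-z|\ge\epsilon\}}$, so that $\bm{A}_\pm^\epsilon$ has a bounded kernel and is therefore Hilbert--Schmidt and compact; it then suffices to prove $\norm{\bm{A}_\pm-\bm{A}_\pm^\epsilon}\to0$ as $\epsilon\to0$, exhibiting $\bm{A}_\pm$ as a norm limit of compact operators.

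The hard part is exactly this last norm estimate when $\gamma$ is merely $C^1$. A crude Schur test bounds the near-diagonal remainder by the Dini integral $\int_0^\epsilon\omega_T(t)\,t^{-1}\,dt$, which need not converge when $T$ is only continuous, so the elementary kernel-size argument breaks down. Overcoming this is the content of Lanzani \cite{Lan99}: instead of estimating the kernel pointwise, one controls the near-diagonal operator through the $L^2$-boundedness of the Cauchy integral from \cite{CMM82}, with the oscillation $\sup_{|u|<\epsilon}\omega_T(|u|)\to0$ supplying the smallness of the operator norm rather than of the kernel. This is the single step where passing from smooth to $C^1$ demands more than the cancellation computation above.
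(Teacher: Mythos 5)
Your proposal is correct and takes essentially the same route as the paper: the paper's proof of Proposition \ref{P:Kerzman-Stein-compact} is simply a citation to Kerzman--Stein \cite{KerSte78a} for smooth curves and to Lanzani \cite{Lan99} for the $C^1$ case, and your argument carries out the classical cancellation computation (which already settles the $C^{1,\alpha}$ case) and then, exactly as the paper does, defers the genuinely $C^1$ step to \cite{Lan99}. Your observation that the Dini-type kernel estimate $\int_0^\epsilon \omega_T(t)\,t^{-1}\,dt$ can diverge for merely continuous tangents correctly identifies why no elementary kernel-size argument can close the gap, which is precisely why the paper outsources this step to Lanzani rather than proving it.
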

\begin{proof}
See \cite{KerSte78a} for the original proof for $C^\infty$ domains and Lanzani's work \cite{Lan99} for the proof on $C^1$ curves.
Also see Bell's book \cite{Bell16} for a different perspective on the $C^\infty$ setting.
\end{proof}


\begin{proposition}\label{P:concat-Berezin-comps}
Let $\gamma$ be a simple closed Lipschitz curve in the complex plane.
The following computations hold for $z \in \C \backslash \{\gamma\}$:
\begin{subequations}
\begin{align}
&\sB(\gamma,\bm{A}_+,\bm{A}_-)(z) \equiv 0, \label{P:concat-Berezin-comps-A} \\
&\sB(\gamma,\bm{A}^2_+,\bm{A}^2_-)(z) = 1 - \Lambda(\gamma,z)^2, \label{P:concat-Berezin-comps-A^2}
\end{align}
where $\bm{A}_\pm^2 = \bm{A}_\pm \circ \bm{A}_\pm$.
\end{subequations}
\end{proposition}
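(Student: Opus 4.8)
The plan is to prove the two identities separately, both reducing to direct inner-product computations with the unit vector $s_z^\pm$, exploiting the projection property of $\bm{C}_\pm$ and the kernel identity \eqref{E:Cauchy-adjoint-Szego}. Throughout, the essential point is that $s_z^\pm \in \ch_\pm^2(\gamma)$ is a unit vector, so $\bm{C}_\pm s_z^\pm = s_z^\pm$.

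For \eqref{P:concat-Berezin-comps-A}, I would expand $\bm{A}_\pm = \bm{C}_\pm - \bm{C}_\pm^*$ directly against $s_z^\pm$:
\[
\langle \bm{A}_\pm s_z^\pm, s_z^\pm\rangle = \langle \bm{C}_\pm s_z^\pm, s_z^\pm\rangle - \langle \bm{C}_\pm^* s_z^\pm, s_z^\pm\rangle = \langle s_z^\pm, s_z^\pm\rangle - \langle s_z^\pm, \bm{C}_\pm s_z^\pm\rangle = 1 - 1 = 0,
\]
where moving $\bm{C}_\pm^*$ back across the inner product and reusing $\bm{C}_\pm s_z^\pm = s_z^\pm$ closes the computation. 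By the definition of the concatenated Berezin transform this is exactly \eqref{P:concat-Berezin-comps-A}, valid on both $\Omega_+$ and $\Omega_-$.

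For \eqref{P:concat-Berezin-comps-A^2}, the structural observation I would lead with is that $\bm{A}_\pm$ is skew-adjoint: $\bm{A}_\pm^* = \bm{C}_\pm^* - \bm{C}_\pm = -\bm{A}_\pm$. This collapses the quadratic expression to a single norm,
\[
\langle \bm{A}_\pm^2 s_z^\pm, s_z^\pm\rangle = \langle \bm{A}_\pm s_z^\pm, \bm{A}_\pm^* s_z^\pm\rangle = -\norm{\bm{A}_\pm s_z^\pm}^2,
\]
so the entire task reduces to evaluating the one vector $\bm{A}_\pm s_z^\pm$ and its $L^2$-norm. Combining $\bm{C}_\pm s_z^\pm = s_z^\pm$ with \eqref{E:Cauchy-adjoint-Szego}, which yields $\bm{C}_\pm^* s_z^\pm(\zeta) = \overline{C_\pm(z,\zeta)}/\sqrt{S_\pm(z,z)}$, I get
\[
\bm{A}_\pm s_z^\pm(\zeta) = \frac{S_\pm(\zeta,z) - \overline{C_\pm(z,\zeta)}}{\sqrt{S_\pm(z,z)}}.
\]

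It then remains to expand the norm $\norm{\bm{A}_\pm s_z^\pm}^2$, where three integrals appear. The term $\int_\gamma |S_\pm(\zeta,z)|^2\,d\sigma = S_\pm(z,z)$ is the Szeg\H{o} reproducing property on the diagonal; the term $\int_\gamma |C_\pm(z,\zeta)|^2\,d\sigma = \norm{C_\pm(z,\cdot)}^2$ is precisely the numerator of $\Lambda(\gamma,z)^2$; and the cross term $\int_\gamma S_\pm(\zeta,z) C_\pm(z,\zeta)\,d\sigma = S_\pm(z,z)$ follows from applying the Cauchy integral formula to $S_\pm(\cdot,z) \in \ch_\pm^2(\gamma)$ and evaluating at $z$. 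Since $S_\pm(z,z) > 0$ is real (Remark \ref{R:positivity-of-S(z,z)}), taking the real part of this cross term is harmless, and collecting terms gives $\norm{\bm{A}_\pm s_z^\pm}^2 = (\norm{C_\pm(z,\cdot)}^2 - S_\pm(z,z))/S_\pm(z,z) = \Lambda(\gamma,z)^2 - 1$. Hence $\langle \bm{A}_\pm^2 s_z^\pm, s_z^\pm\rangle = 1 - \Lambda(\gamma,z)^2$, which is \eqref{P:concat-Berezin-comps-A^2}. The only mildly delicate point is correctly tracking the conjugations in the cross term; once the explicit formula for $\bm{A}_\pm s_z^\pm$ is in hand everything else is forced, so I anticipate no real obstacle.
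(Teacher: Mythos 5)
Your proof is correct; part \eqref{P:concat-Berezin-comps-A} is verbatim the paper's argument, and part \eqref{P:concat-Berezin-comps-A^2} reaches the same conclusion by a mildly different decomposition of the same quadratic form. The paper expands $(\bm{C}_\pm-\bm{C}_\pm^*)^2$ into four terms, uses the projection identities together with $\bm{C}_\pm s_z^\pm = s_z^\pm$ to reduce the Berezin transform to $1-\norm{\bm{C}_\pm^* s_z^\pm}^2$, and then invokes \eqref{E:Cauchy-adjoint-Szego} exactly once, so that $\norm{\bm{C}_\pm^* s_z^\pm}^2=\Lambda_\pm(\gamma,z)^2$ follows immediately from the definition of $\Lambda$ with no kernel integrals left to evaluate. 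You instead collapse the quadratic form via skew-adjointness to $-\norm{\bm{A}_\pm s_z^\pm}^2$, write out $\bm{A}_\pm s_z^\pm = \big(S_\pm(\cdot,z)-\ol{C_\pm(z,\cdot)}\big)/\sqrt{S_\pm(z,z)}$ explicitly, and expand the norm into three integrals; this costs you two additional (correct) verifications, namely $\norm{S_\pm(\cdot,z)}^2=S_\pm(z,z)$ and the cross term $\int_\gamma S_\pm(\zeta,z)C_\pm(z,\zeta)\,d\sigma(\zeta)=S_\pm(z,z)$ via the Cauchy integral formula, which the paper disposes of operator-theoretically by moving adjoints across the inner product (e.g.\ $\langle s_z^\pm,\bm{C}_\pm^* s_z^\pm\rangle = \langle \bm{C}_\pm s_z^\pm,s_z^\pm\rangle=1$). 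What your route buys: since $|S_\pm(\zeta,z)-\ol{C_\pm(z,\zeta)}| = |S_\pm(z,\zeta)-C_\pm(z,\zeta)|$ by conjugate symmetry, your computation exhibits $\sB(\gamma,\bm{A}_+^2,\bm{A}_-^2)(z)$ as $-\norm{S_\pm(z,\cdot)-C_\pm(z,\cdot)}_{L^2(\gamma)}^2\,/\,S_\pm(z,z)$, i.e.\ as minus the normalized $L^2$-distance between the Szeg\H{o} and Cauchy kernels, which is exactly the quantity \eqref{E:L2-distance-Szego-Cauchy} driving the rigidity argument of Corollary \ref{C:vanishing-of-the-Kerzman-Stein-kernel}; the paper's version is shorter, while yours makes that geometric content visible.
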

\begin{proof}

Let $z \in \Omega_\pm \backslash \{\infty\}$.
For \eqref{P:concat-Berezin-comps-A}, we need only note that $\bm{C}_\pm$ fixes $s_z$.
Thus,
\begin{align*}
\sB(\gamma,\bm{A}_+,\bm{A}_-)(z) = \langle \bm{A}_\pm s_z^\pm,s_z^\pm \rangle &= \langle (\bm{C}_\pm - \bm{C}_\pm^*) s_z^\pm,s_z^\pm \rangle \\
&= \langle\bm{C}_\pm s_z^\pm,s_z^\pm \rangle - \langle s_z^\pm,\bm{C}_\pm s_z^\pm \rangle = \langle s_z^\pm,s_z^\pm \rangle - \langle s_z^\pm, s_z^\pm \rangle  = 0.
\end{align*}
For \eqref{P:concat-Berezin-comps-A^2}, we have that since both $\bm{C}_\pm$ and $\bm{C}_\pm^*$ are projections
\begin{align}
\sB(\gamma,\bm{A}^2_+,\bm{A}^2_-)(z)
&= \langle (\bm{C}_\pm - \bm{C}_\pm^*)^2 s_z^\pm,s_z^\pm \rangle \notag \\
&= \langle (\bm{C}^2_\pm - \bm{C}_\pm^*\bm{C}_\pm - \bm{C}_\pm\bm{C}_\pm^* + (\bm{C}_\pm^*)^2) s_z^\pm, s_z^\pm \rangle \notag \\
&= \langle\bm{C}_\pm s_z^\pm, s_z^\pm \rangle - \langle \bm{C}_\pm^* \bm{C}_\pm s_z^\pm, s_z^\pm \rangle - \langle\bm{C}_\pm \bm{C}_\pm^* s_z^\pm, s_z^\pm \rangle + \langle \bm{C}_\pm^* s_z^\pm, s_z^\pm \rangle \notag \\
&= \langle s_z^\pm, s_z^\pm \rangle - \langle \bm{C}_\pm s_z^\pm ,\bm{C}_\pm s_z^\pm \rangle - \langle\bm{C}_\pm \bm{C}_\pm^* s_z^\pm, s_z^\pm \rangle + \langle s_z^\pm, \bm{C}_\pm s_z^\pm \rangle \notag \\
&= 1 - \norm{\bm{C}_\pm^* s_z^\pm}^2. \label{E:Berezin-of-A^2}
\end{align}
But notice that
\begin{equation}\label{E:Lambda^2-as-a-norm}
\norm{\bm{C}_\pm^* s_z}^2 
= \frac{1}{S_\pm(z,z)} \norm{\bm{C}^*_\pm \big( S_\pm(\cdot,z) \big)}^2 
= \frac{1}{S_\pm(z,z)} \norm{\ol{C_\pm(z,\cdot)}}^2 
= \Lambda_\pm(\gamma,z)^2,
\end{equation}
where the second equality follows from \eqref{E:Cauchy-adjoint-Szego} and the last equality is by definition.
Combining \eqref{E:Berezin-of-A^2} and \eqref{E:Lambda^2-as-a-norm} gives the result.
\end{proof}



\begin{proof}[Proof of Theorem \ref{T:Lambda-continuinty-at-boundary}]
First assume that $\gamma$ is a simple closed $C^1$ curve in the plane.
It is clear from the definition that $z \mapsto \Lambda(\gamma,z)$ is continuous on $\C \backslash \{\gamma\}$, while continuity at $\infty$ has already been verified in Theorem \ref{T:Lambda-at-infinity}.
It thus remains to check what happens near the curve (by definition, $\Lambda(\gamma,\zeta_0) = 1$ for $\zeta_0 \in \gamma$).
Proposition \ref{P:Kerzman-Stein-compact} says $\bm{A}_\pm$ is compact, so $\bm{A}_\pm^2$ is also compact. 
Lemma \ref{L:Berezin-weak-conv-to 0} thus implies that 
\begin{equation*}
\sB(\gamma,\bm{A}^2_+,\bm{A}^2_-)(z) = \langle \bm{A}_\pm^2 s_z^\pm,s_z^\pm \rangle \to 0
\end{equation*}
as $z \in \Omega_\pm$ tends to any $\zeta_0 \in \gamma$.
But by Proposition \ref{P:concat-Berezin-comps}, this is equivalent to saying $\Lambda_\pm(\gamma,z)^2 \to 1$ as $z \in \Omega_\pm$ tends to $\zeta_0 \in \gamma$.
Since $\Lambda_\pm(\gamma,z)$ is positive, we conclude that $\lim_{z \to \zeta_0} \Lambda(\gamma,z) = 1 = \Lambda(\gamma,\zeta_0)$.

If $\gamma$ is a $C^1$ curve in the Riemann sphere passing through the point at infinity, then we use a Möbius transformation $\Phi$ to map it to a bounded $C^1$ curve $\Phi(\gamma)$.
Möbius invariance combined with the above argument now completes the proof.
\end{proof}


\section{Examples}\label{S:examples}

\subsection{Wedges}\label{SS:the-wedge}

Given $\theta \in (0,\pi)$, define two complementary wedges
\begin{subequations}
\begin{align}
&W_\theta = \{r e^{i\varphi} \in \C: r>0,\quad |\varphi| < \theta \}, \label{E:def-of-convex-wedge} \\
&V_\theta = \{r e^{i\varphi} \in \C: r>0,\quad \theta < \varphi < 2 \pi -\theta \}. \label{E:def-of-non-convex-wedge}
\end{align}
It suffices to consider $\theta \in (0,\frac{\pi}{2})$, so that $W_\theta$ is a convex set and $V_\theta$ is non-convex.
Let $\gamma_\theta$ parameterize the boundary $bW_\theta$:
\begin{equation}
\gamma_\theta(t) = 
\begin{dcases}
-t e^{i \theta}, &t \in (-\infty,0), \\
t e^{-i \theta}, &t \in [0,\infty), \\
\infty, & t = \infty.
\end{dcases}
\end{equation}
\end{subequations}

We now have a partition of the Riemann sphere $\hat{\C} = W_\theta \cup V_\theta \cup \gamma_\theta$.
In the notation of previous sections, we have $W_\theta = \Omega_+$ and  $V_\theta =\Omega_-$.
Thus we write 
\begin{subequations}
\begin{align}
&\Lambda_+(\gamma_\theta,r e^{i \varphi}) = \frac{1}{4\pi^2 S_{W_\theta}(r e^{i\varphi}, r e^{i\varphi})} \int_{\gamma_\theta} \frac{d\sigma(\zeta)}{|\zeta-r e^{i \varphi}|^2}, \qquad r>0, \quad \varphi \in (-\theta,\theta), \label{E:Lambda+wedge} \\
&\Lambda_-(\gamma_\theta,r e^{i \varphi}) = \frac{1}{4\pi^2 S_{V_\theta}(r e^{i\varphi}, r e^{i\varphi})} \int_{\gamma_\theta} \frac{d\sigma(\zeta)}{|\zeta-r e^{i \varphi}|^2}, \qquad r>0, \quad \varphi \in (\theta,2\pi - \theta). \label{E:Lambda-wedge}
\end{align}
\end{subequations}

\subsubsection{Szeg\H{o} kernels}

The Szeg\H{o} kernels of $W_\theta$ and $V_\theta$, can be computed from their Riemann maps.
For $\alpha \in (0,\pi)$, let $W_\alpha$ be the (possibly non-convex) wedge given by \eqref{E:def-of-convex-wedge}.
It is straightforward to verify that the Riemann map $\Psi_\alpha: W_\alpha \to \D$ takes the form
\begin{equation}
\Psi_\alpha(z) = \frac{1-z^{\frac{\pi}{2\alpha}}}{1+z^{\frac{\pi}{2\alpha}}},
\end{equation}
where the fractional power $z^{\frac{\pi}{2\alpha}}$ refers to the branch preserving the positive real axis.

Setting $\alpha = \theta$ and $z = r e^{i\varphi}$, the transformation law in Proposition~\ref{P:SzegoTransLaw} and \eqref{E:Szego-on-disc} show
\begin{subequations}
\begin{equation}\label{E:Szego-kernel-Wtheta}
S_{W_\theta}(re^{i\varphi},re^{i\varphi}) 
= \frac{1}{8r\theta}\sec\left(\frac{\pi \varphi}{2\theta}\right), \qquad r>0, \quad \varphi \in (-\theta,\theta).
\end{equation}

The Szeg\H{o} kernel for $V_\theta$ is computed similarly.
First observe that the map $z \mapsto -z$ sends $V_\theta$ to $W_{\pi-\theta}$.
From here, apply the map $\Psi_{\pi-\theta}$ to obtain the Riemann map from $V_\theta$ to $\D$.
\begin{equation}\label{E:Szego-kernel-Vtheta}
S_{V_\theta}(re^{i\varphi},re^{i\varphi}) 
= \frac{1}{8r(\pi-\theta)}\sec\left(\frac{\pi}{2}\frac{(\pi - \varphi)}{(\pi - \theta)}\right), \qquad r>0, \quad \varphi \in (\theta,2\pi-\theta).
\end{equation}
\end{subequations}

\subsubsection{$L^2$-norm of the Cauchy kernel}

Computation of the integrals in \eqref{E:Lambda+wedge} and \eqref{E:Lambda-wedge} is assisted by the following 
\begin{lemma}\label{L:Wedge-aux-integral-calculation}
Let $\alpha \in (0,2\pi)$ and $r>0$.
Then
\begin{equation}\label{E:Wedge-aux-integral-calculation}
\ci(r,\alpha) := \int_0^\infty \frac{dx}{|x-re^{i\alpha}|^2} =
\frac{1}{r\sinc(\pi-\alpha)},
\end{equation}
where 
\begin{equation*} \sinc(t):=
\begin{dcases}
\frac{\sin(t)}{t}, & t \neq 0 \\
1, & t=0.
\end{dcases}
\end{equation*}
\end{lemma}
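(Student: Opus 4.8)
The plan is to reduce \eqref{E:Wedge-aux-integral-calculation} to an elementary arctangent integral and to remove the two annoyances — the parameter $r$ and the sign of $\sin\alpha$ — by symmetry before computing. First I would exploit homogeneity: the substitution $x \mapsto rx$ gives $|rx - re^{i\alpha}|^2 = r^2|x - e^{i\alpha}|^2$, hence $\ci(r,\alpha) = \tfrac{1}{r}\,\ci(1,\alpha)$. So it suffices to treat $r = 1$ and prove $\ci(1,\alpha) = 1/\sinc(\pi-\alpha) = (\pi-\alpha)/\sin\alpha$, where I have used $\sin(\pi-\alpha) = \sin\alpha$.

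Next I would cut down the range of $\alpha$ using a reflection. For real $x$ one has $|x - \overline{w}| = |x - w|$, and since $\overline{e^{i\alpha}} = e^{i(2\pi-\alpha)}$ this yields $\ci(1,\alpha) = \ci(1,2\pi-\alpha)$. Because $\sinc$ is even, the target formula is likewise invariant under $\alpha \mapsto 2\pi-\alpha$, so it is enough to establish the identity for $\alpha \in (0,\pi]$. This is the step that will make the rest painless, since on $(0,\pi]$ one has $\sin\alpha \ge 0$ and the principal branch of $\arctan$ will need no correction.

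For the remaining range I would write $|x - e^{i\alpha}|^2 = (x-\cos\alpha)^2 + \sin^2\alpha$. The degenerate case $\alpha = \pi$ is immediate, as the integral becomes $\int_0^\infty (x+1)^{-2}\,dx = 1$, matching $1/\sinc(0) = 1$. For $\alpha \in (0,\pi)$ the antiderivative is $\tfrac{1}{\sin\alpha}\arctan\big(\tfrac{x-\cos\alpha}{\sin\alpha}\big)$ (legitimate since $\sin\alpha > 0$), and evaluating from $0$ to $\infty$ gives $\tfrac{1}{\sin\alpha}\big(\tfrac{\pi}{2} + \arctan(\cot\alpha)\big)$. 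The one point requiring care is the branch of the arctangent: because $\tfrac{\pi}{2} - \alpha \in (-\tfrac{\pi}{2},\tfrac{\pi}{2})$ for $\alpha \in (0,\pi)$, one has $\arctan(\cot\alpha) = \arctan\!\big(\tan(\tfrac{\pi}{2}-\alpha)\big) = \tfrac{\pi}{2} - \alpha$, which collapses the expression to $(\pi-\alpha)/\sin\alpha$, exactly as required.

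I expect the only genuine obstacle to be this arctangent branch bookkeeping in the final step; without the reflection reduction one would need a separate case for $\alpha \in (\pi,2\pi)$, where $\cot\alpha$ and the principal branch differ by a multiple of $\pi$. Confining $\alpha$ to $(0,\pi]$ first is precisely what avoids that complication.
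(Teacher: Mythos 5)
Your proposal is correct and follows essentially the same route as the paper: the same three-way case split ($\alpha=\pi$ trivially, $\alpha\in(0,\pi)$ via completing the square and the arctangent antiderivative yielding $\tfrac{1}{\sin\alpha}\bigl(\tfrac{\pi}{2}+\arctan(\cot\alpha)\bigr)$, and $\alpha\in(\pi,2\pi)$ by reflection across the real axis). The only differences are cosmetic — you normalize to $r=1$ first and perform the reflection before rather than after the computation, and you spell out the identity $\arctan(\cot\alpha)=\tfrac{\pi}{2}-\alpha$ that the paper dismisses as ``elementary trigonometry.''
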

\begin{proof}
If $\alpha = \pi$, the fundamental theorem of calculus gives the result.
When $\alpha \in (0,\pi)$,
\begin{align}
\ci(r,\alpha) 
= \int_0^\infty \frac{dx}{x^2+r^2-2xr\cos\alpha} 
&= \frac{1}{r^2\sin^2\alpha}\int_0^\infty \frac{dx}{1+ \left(\frac{x-r\cos\alpha}{r\sin\alpha}\right)^2} \notag \\
&= \frac{1}{r\sin\alpha}\int_{-\!\cot\alpha}^\infty \frac{du}{1+u^2} \notag \\
&= \frac{1}{r\sin\alpha} \left( \frac{\pi}{2} + \arctan(\cot \alpha) \right). \label{E:arctan-of-cotan}
\end{align}
Elementary trigonometry now confirms that \eqref{E:Wedge-aux-integral-calculation} holds in this case.
For $\alpha\in (\pi,2\pi)$, reflection across the horizontal axis reveals that $\ci(r,\alpha) = \ci(r,2\pi-\alpha)$.
Combining this with the earlier result for $\alpha \in (0,\pi]$ shows that \eqref{E:Wedge-aux-integral-calculation} holds for $\alpha \in (0,2 \pi)$.
\end{proof}

Fix $\theta \in (0,\tfrac{\pi}{2})$ and take $z = r e^{i\varphi} \in W_\theta$, with $r>0$, $\varphi \in (-\theta,\theta)$.
Using Lemma \ref{L:Wedge-aux-integral-calculation} it is easily verified that (just draw a picture)
\begin{subequations}
\begin{align}
\norm{C(re^{i\varphi},\cdot)}^2_{L^2(\gamma_\theta)} 
&= \frac{1}{4\pi^2}\left( \ci(r,\theta-\varphi) + \ci(r,\theta+\varphi) \right)  \notag\\
&= \frac{1}{4\pi^2 r}\left( \frac{1}{\sinc(\pi-(\theta-\varphi))} + \frac{1}{\sinc(\pi-(\theta+\varphi))} \right). \label{E:norm-Cauchy-kernel-Wtheta}
\end{align}

Similarly, take $z = r e^{i\varphi} \in V_\theta$, with $r>0$ and $\varphi \in (\theta,2\pi-\theta)$.
Then Lemma \ref{L:Wedge-aux-integral-calculation} gives
\begin{align}
\norm{C(re^{i\varphi},\cdot)}^2_{L^2(\gamma_\theta)} 
&= \frac{1}{4\pi^2}\left( \ci(r,\varphi - \theta) + \ci(r,2\pi - \theta - \varphi) \right) \notag \\
&=\frac{1}{4\pi^2 r}\left( \frac{1}{\sinc(\pi-(\varphi-\theta))} + \frac{1}{\sinc(\pi-(\varphi+\theta))} \right). \label{E:norm-Cauchy-kernel-Vtheta}
\end{align}
\end{subequations}

\subsubsection{The $\Lambda$-function}
From Theorem \ref{T:Lambda-Proj-Inv} we easily see $\Lambda(\gamma_\theta,r e^{i\varphi})$ is independent of $r>0$.  Alternately, by canceling factors of $r^{-1}$ in the quotients of \eqref{E:norm-Cauchy-kernel-Wtheta} and \eqref{E:norm-Cauchy-kernel-Vtheta}   by the corresponding results from  \eqref{E:Szego-kernel-Wtheta}  and \eqref{E:Szego-kernel-Vtheta}  we obtain the following

\begin{theorem}\label{T:Lambda-wedge-formula}
For $\theta \in (0,\frac{\pi}{2})$,
the function $re^{i\varphi} \mapsto \Lambda(\gamma_\theta,r e^{i\varphi})^2$ is computed on $W_\theta = \Omega_+$ and $V_\theta = \Omega_-$.
\begin{itemize}[wide]
\item[$(a)$] Let $z = r e^{i \theta} \in W_\theta$, so that $r>0$ and $\varphi \in (-\theta,\theta)$. Then
\begin{align*}
\Lambda_+(\gamma_\theta,re^{i\varphi})^2
= \frac{2\theta}{\pi^2} \left( \frac{1}{\sinc(\pi-(\theta-\varphi))} + \frac{1}{\sinc(\pi-(\theta+\varphi))} \right) \cos\left(\frac{\pi \varphi}{2\theta}\right). 
\end{align*}

\item[$(b)$] Let $z = r e^{i \theta} \in V_\theta$, so that $r>0$ and $\varphi \in (\theta, 2\pi-\theta)$. Then
\begin{align*}
\Lambda_-(\gamma_\theta,re^{i\varphi})^2
=\frac{2(\pi-\theta)}{\pi^2} \left( \frac{1}{\sinc(\pi-(\varphi-\theta))} + \frac{1}{\sinc(\pi-(\varphi+\theta))} \right) \cos\left(\frac{\pi}{2}\frac{(\pi - \varphi)}{(\pi - \theta)}\right).
\end{align*}
\end{itemize}
\end{theorem}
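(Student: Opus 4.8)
The plan is to obtain the two stated formulas by a direct division, since the ingredients are already assembled: the diagonal Szeg\H{o} kernels recorded in \eqref{E:Szego-kernel-Wtheta} and \eqref{E:Szego-kernel-Vtheta}, and the squared Cauchy-kernel norms recorded in \eqref{E:norm-Cauchy-kernel-Wtheta} and \eqref{E:norm-Cauchy-kernel-Vtheta}. First I would recall from \eqref{E:Cauchy-kernel-def} that $|C_\pm(z,\zeta)|^2 = (4\pi^2|\zeta-z|^2)^{-1}$, since $|T(\zeta)|=1$ almost everywhere; this is exactly what turns the definition of $\Lambda_\pm^2$ into the integral quotients displayed in \eqref{E:Lambda+wedge} and \eqref{E:Lambda-wedge}.

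The numerator in each quotient is produced by the ``draw a picture'' step. The curve $\gamma_\theta$ is the union of the two rays $\{s e^{\pm i\theta}: s>0\}$, so $\int_{\gamma_\theta}|\zeta-z|^{-2}\,d\sigma$ splits into a sum of two ray integrals. Multiplying each ray by $e^{\mp i\theta}$ rotates it onto the positive real axis and converts its integral into $\ci(r,\alpha)$ of Lemma \ref{L:Wedge-aux-integral-calculation}, whose angular argument is the angle between $z=re^{i\varphi}$ and that ray --- namely $\theta\mp\varphi$ for $z\in W_\theta$, and $\varphi-\theta$ together with $2\pi-\theta-\varphi$ for $z\in V_\theta$ (the latter symmetrized via $\ci(r,\alpha)=\ci(r,2\pi-\alpha)$ when convenient). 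This recovers \eqref{E:norm-Cauchy-kernel-Wtheta} and \eqref{E:norm-Cauchy-kernel-Vtheta}. For the denominator I would cite the wedge Riemann maps $\Psi_\alpha$ and feed them through Proposition \ref{P:SzegoTransLaw} together with the disc kernel \eqref{E:Szego-on-disc} to reach the stated diagonal Szeg\H{o} kernels.

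Dividing numerator by denominator then finishes part $(a)$: the factors of $r^{-1}$ in \eqref{E:norm-Cauchy-kernel-Wtheta} and \eqref{E:Szego-kernel-Wtheta} cancel --- consistent with, and independently forced by, the scaling (hence Möbius) invariance of Theorem \ref{T:Lambda-Proj-Inv}, which makes $\Lambda(\gamma_\theta,\cdot)$ constant along rays from the origin --- the constants combine as $8\theta/(4\pi^2)=2\theta/\pi^2$, and dividing by $\sec(\pi\varphi/(2\theta))$ flips the secant into the cosine factor $\cos(\pi\varphi/(2\theta))$. Part $(b)$ is carried out verbatim, with $\theta$ replaced by $\pi-\theta$ and $\varphi$ by $\pi-\varphi$ in the Szeg\H{o} factor, as recorded in \eqref{E:Szego-kernel-Vtheta}.

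The final division is entirely routine, so the real content lies upstream in the two inputs. I expect the main obstacle to be the angular bookkeeping in the numerator --- correctly matching each ray to the argument of $\ci$ it contributes and checking that these arguments fall in the range where Lemma \ref{L:Wedge-aux-integral-calculation} applies --- together with verifying, for the denominator, that the branch of $z^{\pi/(2\alpha)}$ preserving the positive real axis genuinely makes $\Psi_\alpha$ a Riemann map of the wedge, so that Proposition \ref{P:SzegoTransLaw} and \eqref{E:Szego-on-disc} deliver the claimed diagonal kernels.
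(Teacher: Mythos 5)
Your proposal is correct and follows essentially the same route as the paper: the paper likewise derives \eqref{E:norm-Cauchy-kernel-Wtheta} and \eqref{E:norm-Cauchy-kernel-Vtheta} from Lemma \ref{L:Wedge-aux-integral-calculation} via the two ray integrals, obtains \eqref{E:Szego-kernel-Wtheta} and \eqref{E:Szego-kernel-Vtheta} from the wedge Riemann maps through Proposition \ref{P:SzegoTransLaw} and \eqref{E:Szego-on-disc}, and then proves the theorem by exactly the division you describe, with the factors of $r^{-1}$ canceling (consistent with the Möbius invariance of Theorem \ref{T:Lambda-Proj-Inv}). Your angular bookkeeping ($\theta\mp\varphi$ inside $W_\theta$; $\varphi-\theta$ and $2\pi-\theta-\varphi$ inside $V_\theta$) matches the paper's.
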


\subsubsection{Remarks on the formula}
Since $\Lambda(\gamma_\theta,r e^{i \varphi})$ is independent of $r>0$, let us define 
\begin{equation*}
L(\theta,\varphi) := \Lambda(\gamma_\theta, e^{i \varphi}).
\end{equation*}
Theorem \ref{T:Cauchy-norm-lower-bound} tells us that $\norm{\bm{C}}_{L^2(\gamma_\theta)} \ge \sup \{ L(\theta,\varphi) : \varphi \in [-\theta, 2\pi-\theta) \}$.

\noindent $\bullet$ In \cite{Bolt07b} Bolt observes that a Möbius transformation maps the wedge $W_\theta$  onto a lens with vertices at $\pm 1$.
This lens has boundary length $\sigma(\gamma) = 4\theta \csc\theta$ and capacity $\kappa(\gamma) = \pi/(2(\pi-\theta))$, and Bolt uses this information to obtain a lower bound on $\norm{\bm{A}}$; see Remark \ref{R:Bolt-lower-bound}.
The closely related lower bound on $\norm{\bm{C}}$ given by $\Lambda(\gamma,\infty)$ in Theorem \ref{T:Lambda-at-infinity} together with the Möbius invariance of $\Lambda$ shows 
\begin{equation*}
\norm{\bm{C}}_{L^2(\gamma_\theta)} 
\ge \sqrt{\frac{\sigma(\gamma)}{2\pi\kappa(\gamma)}}
=\frac{2}{\pi} \sqrt{(\pi-\theta)\theta\csc\theta} := B(\theta).
\end{equation*}

\noindent $\bullet$ The graphs of $L(\theta,\varphi)$ and $B(\theta)$ for $\theta = \tfrac{\pi}{8}, \tfrac{\pi}{4}$ are displayed in Figure \ref{Im:plots-of-Lambda-on-wedge}. 
$B(\theta)$ agrees with the maximum value (at $\varphi = 0$) of $L(\theta,\varphi)$ when the angles correspond to the interior domain $W_\theta$, but is strictly less than the maximum value when $\varphi$ is taken from the exterior $V_\theta$.

\begin{figure}[htp]
\centering
\includegraphics[width=.45\textwidth]{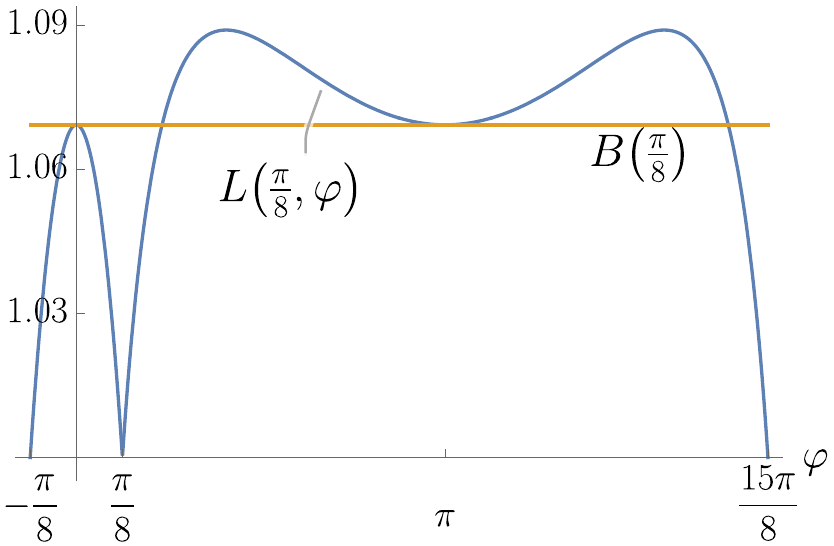} \quad 
\includegraphics[width=.45\textwidth]{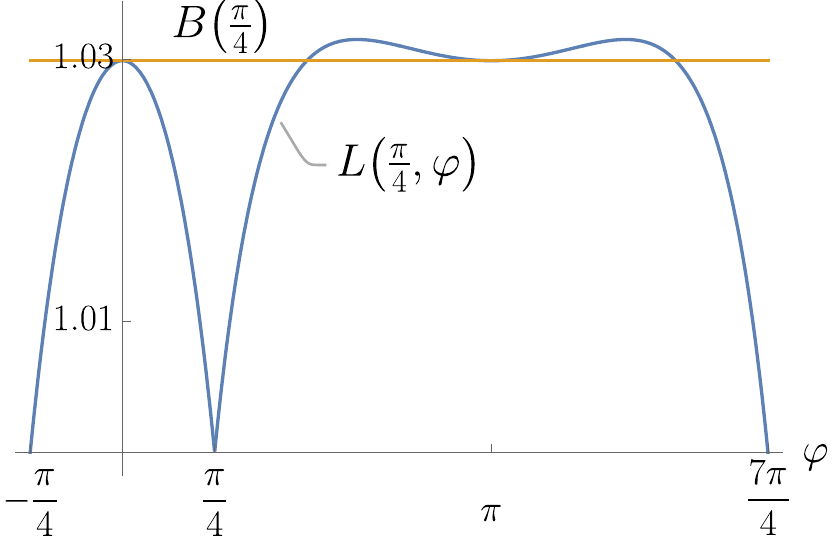}

\caption{Behavior of $\varphi \mapsto L(\theta,
\varphi)$ for $\theta = \frac{\pi}{8}$ and $\theta = \frac{\pi}{4}$.}
\label{Im:plots-of-Lambda-on-wedge}
\end{figure}

\noindent $\bullet$ When $z \in W_\theta \cup V_\theta$, Theorem \ref{T:Lambda>=1} guarantees that $\Lambda(\gamma_\theta,z)>1$.
But the formulas in Theorem \ref{T:Lambda-wedge-formula} show that $\Lambda(\gamma_0,z)$ $\to 1$ as $z=re^{i\varphi}$ tends to any smooth point on the curve $\gamma_\theta$ (meaning that $r>0$ and $\varphi \to -\theta$, $\theta$ or $2\pi -\theta$).
This is illustrated in Figure \ref{Im:plots-of-Lambda-on-wedge} when $\theta= \tfrac{\pi}{8}, \tfrac{\pi}{4}$, and should be compared with the continuity result given in Theorem \ref{T:Lambda-continuinty-at-boundary}.

\noindent $\bullet$ Since the non-smooth boundary points $0, \infty \in \gamma_\theta$ can be approached from any angle $\varphi \in (-\theta,2\pi-\theta)$, the function
$re^{i\varphi} \mapsto \Lambda(\gamma_\theta, re^{i\varphi})$ is discontinuous at both.
In light of the relationship between $\Lambda$, the Berezin transform and the Kerzman-Stein operator $\bm{A}$ in Section \ref{SSS:Kerzman-Stein}, lack of continuity shows that $\bm{A}$ is not compact.
Bolt and Raich \cite{BolRai15} show that $\bm{A}$ is never compact when corners are present.


\subsection{Ellipses}\label{SS:the-ellipse}

The next family of curves we consider are ellipses.
For $r \ge 1$, define
\begin{equation}\label{E:def-of-ellipse}
\mathcal{E}_r = \Big\{x+iy \in \C : \frac{x^2}{r^2}+y^2 = 1 \Big\}.
\end{equation}
As usual $\ce_r$ is oriented counterclockwise, so that $\Omega^r_+$ is the filled in ellipse.

\subsubsection{Families of special functions}\label{SSS:families-of-special-functions}

Elliptic integrals, elliptic functions and Jacobi theta functions comprise a deep and beautiful area of mathematics.
In what follows, the reader is not assumed to have prior background and all necessary definitions are given. 
References to properties relevant to our computations of $\Lambda(\ce_r,z)$ are also interspersed as needed.

The elliptic integrals of the first, second and third kinds ($K,E,\Pi$, respectively) make up a canonical set to which all other elliptic integrals can be reduced.
In what follows, variables $k \in (0,1)$ and $n \in \R$ are called the {\em elliptic modulus} and {\em characteristic}, respectively.
We follow conventions used by Whittaker and Watson \cite[Chapter 22.7]{WhitWatBook}, but the reader is cautioned that other conventions are also in common use (especially in mathematical software; e.g., Mathematica implements these as functions of $k^2$, not $k$):
\begin{subequations}\label{E:3-Elliptic-integrals}
\begin{align}
K(k) &= \int_0^1 \frac{dt}{\sqrt{(1-t^2)(1-k^2t^2)}}, \label{E:def-elliptic-int-K} \\
E(k) &= \int_0^1 \sqrt{\frac{1-k^2t^2}{1-t^2}}\,dt, \label{E:def-elliptic-int-E} \\
\Pi(n,k) &= \int_0^1 \frac{dt}{(1-n t^2)\sqrt{(1-t^2)(1-k^2t^2)}}. \label{E:def-elliptic-int-Pi}
\end{align}
\end{subequations}

Next, recall the Jacobi theta functions, where $z \in \C$ and $|q|<1$ (see \cite[Chapter 21]{WhitWatBook}):
\begin{subequations}\label{E:3-Elliptic-functions}
\begin{align}
&\vartheta_1(z,q) = 2 q^\frac{1}{4} \sum_{j=0}^\infty (-1)^j q^{j(j+1)} \sin[(2j+1)z], \label{E:Jacobi-elliptic-theta-1} \\
&\vartheta_2(z,q) = 2 q^\frac{1}{4} \sum_{j=0}^\infty q^{j(j+1)} \cos[(2j+1)z], \label{E:Jacobi-elliptic-theta-2}\\
&\vartheta_3(z,q) = 1 + 2\sum_{j=1}^\infty q^{j^2} \cos(2jz), \label{E:Jacobi-elliptic-theta-3} \\
&\vartheta_4(z,q) = 1 + 2\sum_{j=1}^\infty (-1)^j q^{j^2} \cos(2jz). \label{E:Jacobi-elliptic-theta-4}
\end{align}
\end{subequations}
Theta functions also admit elegant infinite product expansions; see \cite[Chapter 21.3]{WhitWatBook}.

Define for $k \in [0,1]$, the elliptic nome $q(k)$ by
\begin{subequations}
\begin{equation}\label{E:def-nome}
q(k) = \exp{\bigg[\frac{-\pi K(\sqrt{1-k^2})}{K(k)}\bigg]}.
\end{equation}
This function is strictly increasing with $q(0)=0$ and $q(1)=1$.
The inverse nome $k(q)$ is defined for $q \in [0,1)$ by an infinite product, or equivalently by a ratio of theta functions:
\begin{equation}\label{E:inverse-nome}
k(q) = 4\sqrt{q}\prod_{j=1}^\infty \left[ \frac{1+q^{2j}}{1+q^{2j-1}} \right]^4 =  \frac{\vartheta_2(0,q)^2}{\vartheta_3(0,q)^2}.
\end{equation}
\end{subequations}
A slight abuse of notation lets us write $k(q(k)) = k$ and $q(k(q))=q$ on $[0,1)$; 
also $\lim_{q \to 1} k(q) = 1$, though the infinite product/theta function formula is not valid at $q=1$.

Finally, define Jacobi's elliptic $\mathrm{sn}$ function in terms of the above functions
\begin{equation}\label{E:def-Jacobi-sn}
\mathrm{sn}(u,k) = \frac{\vartheta_3(0,q(k))}{\vartheta_2(0,q(k))} \cdot \frac{\vartheta_1\bigg(\dfrac{u}{\vartheta_3(0,q(k))^2},q(k)\bigg)}{\vartheta_4\bigg(\dfrac{u}{\vartheta_3(0,q(k))^2},q(k)\bigg)}.
\end{equation}
The function $u \mapsto \mathrm{sn}(u,k)$ is is meromorphic and doubly periodic with quarter periods $K := K(k)$ and $iK' := iK(\sqrt{1-k^2})$, i.e., $\mathrm{sn}(u,k) = \mathrm{sn}(u + 4K,k)$ and $\mathrm{sn}(u,k) = \mathrm{sn}(u + 4iK',k)$.


\subsubsection{The norm of the Cauchy kernel}

For $z \in \Omega^r_+ \cup \Omega^r_-$, we write the norm $\norm{C_\pm(z,\cdot)}_{L^2(\ce_r)}$ as an integral over $[-1,1]$.
First parametrize the top and bottom halves of $\ce_r$ by
\begin{equation}\label{E:Ellipse-Param}
\zeta^\pm(t) = rt \pm i\sqrt{1 - t^2}, \qquad -1\le t \le 1.
\end{equation}
Writing the arc length differential $d\sigma(\zeta)$ in terms of this parametrization gives 
\begin{equation}\label{E:arclength-differential}
d\sigma(\zeta) = |d\zeta(t)| =  r\,\sqrt{\frac{1-(1-\frac{1}{r^2})t^2}{1-t^2}}\,dt.
\end{equation}
If $z = \alpha+i\beta$, with $\alpha,\beta \in \R$, then \eqref{E:Ellipse-Param} implies
\begin{align*}
|\zeta^\pm(t) - z|^2 &= |(rt-\alpha) \pm i\big(\sqrt{1-t^2} \mp \beta\big)|^2 \notag \\
&= \alpha^2 + \beta^2 +1 - 2r\alpha t + (r^2-1)t^2 \mp 2\beta\sqrt{1-t^2}.
\end{align*}
Now combine this with \eqref{E:arclength-differential} to see
\begin{align}
\norm{C(z,\cdot)}^2 &=\frac{1}{4\pi^2} \int_{\ce_r} \frac{d\sigma(\zeta)}{|\zeta - z|^2} \notag \\
&= \frac{1}{4\pi^2} \int_{-1}^1 \frac{|d\zeta(t)|}{|\zeta^+(t) - \alpha-i\beta|^2} +  \frac{1}{4\pi^2}  \int_{-1}^1 \frac{|d\zeta(t)|}{|\zeta^-(t) - \alpha-i\beta|^2} \notag \\ 
&= \frac{r}{2\pi^2}\int_{-1}^1 \frac{\alpha^2 + \beta^2 + 1 - 2r\alpha t + (r^2-1)t^2}{(\alpha^2 + \beta^2 + 1 - 2r\alpha t + (r^2-1)t^2)^2 - 4\beta^2(1-t^2)} \sqrt{\frac{1-(1-\frac{1}{r^2})t^2}{1-t^2}}\,dt. \label{E:norm-comp-gen-z-1}
\end{align}

In the special case in which $\alpha = \beta = 0$, \eqref{E:norm-comp-gen-z-1} reduces to
\begin{align}
\norm{C(0,\cdot)}^2_{L^2(\ce_r)}
&= \frac{1}{2\pi^2r}\int_{-1}^1 \frac{r^2  - (r^2-1)t^2}{1  + (r^2-1)t^2} \frac{1}{\sqrt{(1-t^2)(1-(1-\frac{1}{r^2})t^2)}}\,dt \notag \\
&= \frac{1}{\pi^2r}\int_{0}^1 \left( \frac{r^2+1}{1 - (1-r^2)t^2} -1 \right) \frac{1}{\sqrt{(1-t^2)(1-(1-\frac{1}{r^2})t^2)}}\,dt \notag \\
&= \frac{1}{\pi^2 r} \Big( (r^2+1) \cdot \Pi\Big(1-r^2, \sqrt{1-\tfrac{1}{r^2}}\Big) - K\Big(\sqrt{1-\tfrac{1}{r^2}}\Big) \Big). \label{E:Cauchy-norm-at-0}
\end{align}

\subsubsection{The Szeg\H{o} kernel of the interior domain}

The Riemann map of the ellipse $\Theta_r : \Omega_+^r \to \D,$ takes the following form (see, e.g., \cite[Chapter VI]{Nehari1952book} or \cite{SzegoAMM1950}):
\begin{equation}\label{E:ellipse-Riemann-map-1}
\Theta_r(z) = \sqrt{k_r}\cdot \mathrm{sn}\left(\frac{2 K(k_r)}{\pi} \arcsin{\bigg(\frac{z}{\sqrt{r^2-1}}\bigg)} \, , \, k_r \right).
\end{equation}
Here $\mathrm{sn(\cdot,\cdot)}$ is the elliptic function \eqref{E:def-Jacobi-sn}, $K$ is the elliptic integral \eqref{E:def-elliptic-int-K}.
The elliptic modulus $k_r \in [0,1)$ is the unique value (determined by \eqref{E:inverse-nome}) satisfying
\begin{equation}\label{E:inverse-elliptic-modulus-1}
q(k_r) = \bigg(\frac{r-1}{r+1}\bigg)^2.
\end{equation}
The value of $k_r$ in \eqref{E:ellipse-Riemann-map-1} is determined by the eccentricity of $\ce_r$, while the factor $\sqrt{r^2-1}$ appearing in the arcsine accounts for the fact that the foci of $\ce_r$ are at $(\pm(r^2-1),0)$.

From \cite[Example 21.6.5]{WhitWatBook}, it can be seen that
\begin{equation}\label{E:two-theta-identities}
\frac{2 K(k)}{\pi} = \vartheta_3\big(0,q(k)\big)^2, \qquad 
\sqrt{k} = \frac{\vartheta_2(0,q(k))}{\vartheta_3(0,q(k))}.
\end{equation}

Equations \eqref{E:inverse-elliptic-modulus-1}, \eqref{E:two-theta-identities} and the definition of $\mathrm{sn}(\cdot,\cdot)$ in \eqref{E:def-Jacobi-sn} lets us rewrite \eqref{E:ellipse-Riemann-map-1}:
\begin{align}
\Theta_r(z)
&= \frac{\vartheta_2\big(0,\big(\tfrac{r-1}{r+1}\big)^2 \big)}{\vartheta_3\big(0,\big(\tfrac{r-1}{r+1}\big)^2 \big)} \cdot \mathrm{sn}\left( \vartheta_3 \bigg(0,\bigg(\dfrac{r-1}{r+1} \bigg)^2\bigg)^2  \arcsin{\bigg(\frac{z}{\sqrt{r^2-1}}\bigg)} \, , \, k_r \right) \notag \\
&= \frac{\vartheta_1\left(\arcsin{\bigg(\dfrac{z}{\sqrt{r^2-1}}\bigg)},\bigg(\dfrac{r-1}{r+1}\bigg)^2 \right)}{\vartheta_4\left(\arcsin{\bigg(\dfrac{z}{\sqrt{r^2-1}}\bigg)},\bigg(\dfrac{r-1}{r+1}\bigg)^2 \right)}. \label{E:Theta_r-form-2}
\end{align}

The Szeg\H{o} transformation formula \eqref{P:SzegoTransLaw} gives $S_+(z,z)$ in terms of $\Theta_r$ and $\Theta_r'$
\begin{align}\label{E:Szego-interior-formula}
S_+(z,z) = \frac{|\Theta_r'(z)|}{2\pi(1-|\Theta_r(z)|^2)}.
\end{align}
When $z=0$, this formula simplifies.
Indeed, \eqref{E:Theta_r-form-2} shows that $\Theta_r(0) = 0$ and (letting $\vartheta'_1$ denote differentiation in the first slot), the quotient rule gives
\begin{equation}\label{E:Theta'-1}
\Theta_r'(0) = \frac{\vartheta'_1\left( 0, \big(\tfrac{r-1}{r+1}\big)^2 \right)}{ \sqrt{r^2-1} \cdot  \vartheta_4\left( 0, \big(\tfrac{r-1}{r+1}\big)^2 \right)}.
\end{equation}

In \cite[Section 21.4]{WhitWatBook}, Whittaker and Watson establish the following {\em ``remarkable result"} of Jacobi, saying {\em ``several proofs have been given, but none are simple"}:
\begin{equation*}
\vartheta'_1(0,q) = \vartheta_2(0,q) \vartheta_3(0,q) \vartheta_4(0,q).
\end{equation*}
This can be inserted into \eqref{E:Theta'-1} to show
\begin{equation}\label{E:Szego-at-0-ellipse}
S_+(0,0) = \frac{|\Theta'_r(0)|}{2\pi} = \frac{\vartheta_2\left( 0, \big(\tfrac{r-1}{r+1}\big)^2 \right) \vartheta_3\left( 0, \big(\tfrac{r-1}{r+1}\big)^2 \right)}{2\pi \sqrt{r^2-1}}.
\end{equation}


\subsubsection{Calculation of $\Lambda(\ce_r,0)$ and $\Lambda(\ce_r,\infty)$}\label{SSS:Calculation-Lambda-ellipse}

\begin{theorem}\label{T:Cauchy-ellipse-lower-bound}
Let $r \ge 1$.
The function $z \mapsto \Lambda(\ce_r,z)^2$ assumes the following values.
\begin{subequations}
\begin{align}
&\Lambda(\ce_r,\infty)^2 = \frac{4r}{\pi(r+1)} E\Big(\sqrt{1-\tfrac{1}{r^2}}\Big), \label{E:Cauchy-ellipse-lower-bound-infty} \\
&\Lambda(\ce_r,0)^2 = \frac{2}{\pi} \sqrt{1-\tfrac{1}{r^2}} \cdot \frac{ (r^2+1) \cdot \Pi\Big(1-r^2, \sqrt{1-\tfrac{1}{r^2}}\Big) - K\Big(\sqrt{1-\tfrac{1}{r^2}}\Big) }{\vartheta_2\Big(0,\big(\frac{r-1}{r+1}\big)^2 \Big) \, \vartheta_3\Big(0,\big(\frac{r-1}{r+1}\big)^2 \Big)}. \label{E:Cauchy-ellipse-lower-bound-0}
\end{align}
\end{subequations}
\end{theorem}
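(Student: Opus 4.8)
The two formulas are independent and I would prove them separately. The value at $0$ is essentially a quotient of quantities already in hand, whereas the value at $\infty$ reduces to two classical geometric computations: the perimeter and the analytic capacity of $\ce_r$.

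For \eqref{E:Cauchy-ellipse-lower-bound-0}: by the definition of $\Lambda$ we have $\Lambda(\ce_r,0)^2 = \norm{C(0,\cdot)}^2_{L^2(\ce_r)}\,/\,S_+(0,0)$, and both numerator and denominator have already been computed — the numerator in \eqref{E:Cauchy-norm-at-0}, the denominator in \eqref{E:Szego-at-0-ellipse}. So the plan is simply to form this quotient and simplify. The only algebraic point worth isolating is that the factor $\sqrt{r^2-1}/r$ produced by the ratio equals $\sqrt{1-\tfrac1{r^2}}$; after this observation the theta-function factors $\vartheta_2,\vartheta_3$ land in the denominator exactly as stated and the elliptic-integral combination $(r^2+1)\,\Pi(1-r^2,\sqrt{1-1/r^2}) - K(\sqrt{1-1/r^2})$ is carried over verbatim.

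For \eqref{E:Cauchy-ellipse-lower-bound-infty}: the definition \eqref{E:Lambda-def} (consistent with Theorem \ref{T:Lambda-at-infinity}) gives $\Lambda(\ce_r,\infty)^2 = \sigma(\ce_r)/(2\pi\,\kappa(\ce_r))$, so it suffices to compute the arc length and the analytic capacity of $\ce_r$. First I would compute the perimeter: parametrizing $\ce_r$ by $x=r\cos\theta$, $y=\sin\theta$ gives $\sigma(\ce_r)=4\int_0^{\pi/2}\sqrt{\cos^2\theta+r^2\sin^2\theta}\,d\theta$, and after the substitution $\theta\mapsto\tfrac\pi2-\theta$ and pulling out a factor of $r$ this is recognized, via the Whittaker--Watson convention $E(k)=\int_0^{\pi/2}\sqrt{1-k^2\sin^2\theta}\,d\theta$ recorded in \eqref{E:def-elliptic-int-E}, as $\sigma(\ce_r)=4r\,E\big(\sqrt{1-1/r^2}\big)$. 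Next comes the capacity. Since $\ce_r$ is a continuum, its analytic capacity equals the conformal radius of the exterior domain; concretely, as extracted in the proof of Theorem \ref{T:Lambda-at-infinity}, $\kappa(\ce_r)=D(g,\infty)$ for the Riemann map $g:\Omega^r_-\to\D$ with $g(\infty)=0$. I would build $g$ explicitly from the Joukowski-type map $z=\tfrac{c}{2}(\zeta+\zeta^{-1})$ with $c=\sqrt{r^2-1}$, which carries the circle $\{|\zeta|=\rho_0\}$, $\rho_0=\sqrt{(r+1)/(r-1)}$, onto $\ce_r$ and its exterior onto $\Omega^r_-$; composing with $\zeta\mapsto\rho_0/\zeta$ sends $\infty\mapsto 0$ into $\D$. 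Reading off the leading behaviour $g(z)\sim \rho_0 c/(2z)$ as $z\to\infty$ yields $\kappa(\ce_r)=\rho_0 c/2=(r+1)/2$. Substituting both values gives $\Lambda(\ce_r,\infty)^2=4r\,E(\sqrt{1-1/r^2})/(\pi(r+1))$.

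The main obstacle is the capacity computation: one must verify that the proposed map really is the extremal competitor in \eqref{E:def-of-analytic-capacity} — equivalently, that it is the exterior Riemann map with $g(\infty)=0$ — which rests on the injectivity of the Joukowski-type map on $\{|\zeta|>\rho_0\}$ (valid since $\rho_0>1$) together with the identification of analytic capacity with conformal radius for a continuum already used in Theorem \ref{T:Lambda-at-infinity}. Everything else is routine bookkeeping with the elliptic-integral and theta-function conventions of Section \ref{SSS:families-of-special-functions}.
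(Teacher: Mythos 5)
Your proposal is correct, and its skeleton coincides with the paper's proof: for \eqref{E:Cauchy-ellipse-lower-bound-0} the paper likewise just divides \eqref{E:Cauchy-norm-at-0} by \eqref{E:Szego-at-0-ellipse} (your simplification $\sqrt{r^2-1}/r=\sqrt{1-1/r^2}$ is exactly what is needed), and for \eqref{E:Cauchy-ellipse-lower-bound-infty} it likewise reads $\Lambda(\ce_r,\infty)^2=\sigma(\ce_r)/(2\pi\kappa(\ce_r))$ off the definition \eqref{E:Lambda-def} and computes the two geometric quantities separately. The one genuine divergence is the capacity: the paper simply cites the known value $\kappa=\frac{a+b}{2}$ for an ellipse with semi-axes $a,b$ (Ransford, Table 5.1), so $\kappa(\ce_r)=\frac{r+1}{2}$, whereas you derive it by exhibiting the exterior Riemann map through the Joukowski map $z=\tfrac{c}{2}(\zeta+\zeta^{-1})$, $c=\sqrt{r^2-1}$, $\rho_0=\sqrt{(r+1)/(r-1)}$, and reading off $D(g,\infty)=\rho_0 c/2=\frac{r+1}{2}$. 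This is legitimate: the proof of Theorem \ref{T:Lambda-at-infinity} identifies $\kappa(\gamma)$ with $D(g,\infty)$ for the exterior Riemann map vanishing at $\infty$ with positive derivative there, and your map is that Riemann map since the Joukowski map is injective on $\{|\zeta|>\rho_0\}$ with $\rho_0>1$. What each approach buys: the paper's citation is shorter, while your derivation is self-contained, makes the extremality in \eqref{E:def-of-analytic-capacity} explicit, and dovetails with the Joukowski map the paper itself invokes later when treating the exterior Szeg\H{o} kernel of $\ce_r$. Your arc-length computation via the angle parametrization is equivalent to the paper's, which instead reuses \eqref{E:arclength-differential}; the only cosmetic slip is that \eqref{E:def-elliptic-int-E} records the algebraic Legendre form of $E$ rather than the trigonometric form you quote, but the two agree under $t=\sin\theta$.
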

\begin{proof}
The $z=\infty$ formula follows from the definition of $\Lambda$ in \eqref{E:Lambda-def}.
(Recall that Theorem \ref{T:Lambda-at-infinity} shows that $\Lambda(\ce_r,z)$ is continuous at $z=\infty$.)
It is known (see \cite[Table 5.1]{Ransford1995}) that the capacity $\kappa$ of the ellipse $\{x^2/a^2 + y^2/b^2 = 1 \}$ is $\frac{a+b}{2}$, so $\kappa(\ce_r) = \frac{r+1}{2}$.
On the other hand, \eqref{E:arclength-differential} shows the arc length of $\ce_r$ is given by
\begin{equation*}
\sigma(\ce_r) = \int_{\ce_r}d\sigma(\zeta) = 2r\int_{-1}^1 \sqrt{\frac{1-(1-\frac{1}{r^2})t^2}{1-t^2}}\,dt = 4r E\Big(\sqrt{1-\tfrac{1}{r^2}}\Big).
\end{equation*}

For the $z=0$ formula, simply divide \eqref{E:Cauchy-norm-at-0}  by \eqref{E:Szego-at-0-ellipse}.
\end{proof}

\subsubsection{Remarks}
We discuss properties of $\Lambda(\ce_r,0)$ and $\Lambda(\ce_r,\infty)$ as $r$ varies, giving special attention to the endpoint cases of $r \to 1$ and $\infty$.
Both values give ``asymptotically sharp" estimates on $\norm{\bm{C}}$ as $r\to 1$, though $\Lambda(\ce_r,0)$ is larger and thus ``sharper" (see Figure \ref{Im:plots-of-Lambda-on-ellipse_antipodal_points}).
We also briefly discuss $\Lambda(\ce_r,z)$ for other $z$ values.

\begin{figure}[htp]
\centering
\includegraphics[width=.95\textwidth]{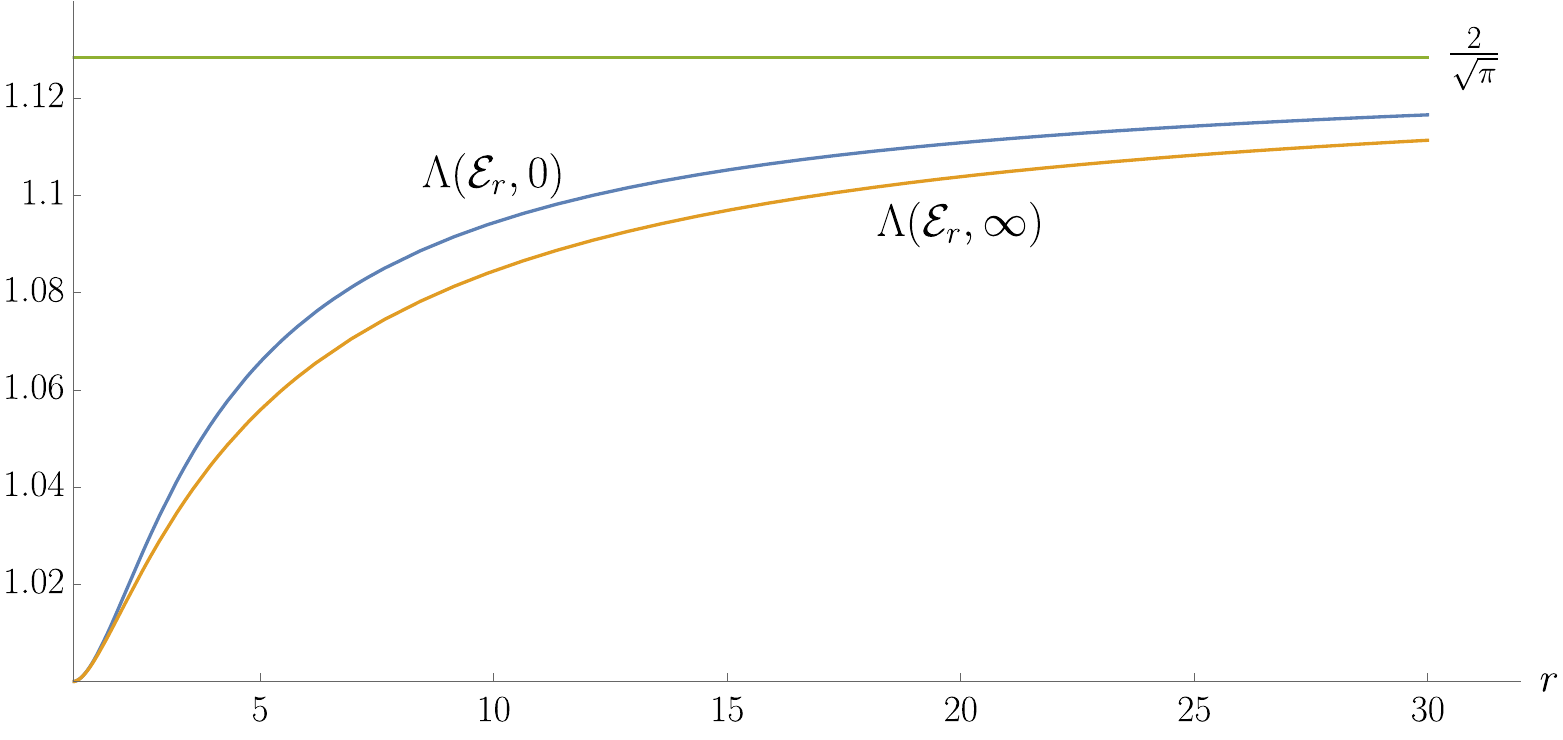}
\caption{Comparison of $r\mapsto \Lambda(\ce_r,0)$ and $r\mapsto \Lambda(\ce_r,\infty)$ for $r\ge1$.}
\label{Im:plots-of-Lambda-on-ellipse_antipodal_points}
\end{figure}

\noindent $\bullet$ {\em Behavior of $\Lambda(\ce_r,\infty)$ and $\Lambda(\ce_r,0)$.}
Using well known asymptotic behavior of elliptic integrals and theta functions (see \cite{HandbookElliptic71}) we expand $\Lambda(\ce_r,\infty)$ and $\Lambda(\ce_r,0)$ near $r=1$:
\begin{subequations}
\begin{align}
\Lambda(\ce_r,\infty) &= 1 + \tfrac{1}{32}(r-1)^2 - \tfrac{1}{32}(r-1)^3 + \tfrac{3}{128}(r-1)^4 + O(|r-1|^5) \label{E:infinfty-expansion} \\
\Lambda(\ce_r,0) &= 1 + \tfrac{1}{32}(r-1)^2 - \tfrac{1}{32}(r-1)^3 + \tfrac{7}{200}(r-1)^4 + O(|r-1|^5). \label{E:zero-expansion}
\end{align}
\end{subequations}
(For the reader interested in working out these details by hand, it is convenient to re-write $\Lambda(\ce_r,0)$ using the so-called {\em Heuman Lambda function}; see \cite[page 225]{HandbookElliptic71}).

In \cite{Bol07}, Bolt shows that the spectrum of the Kerzman-Stein operator $\bm{A}$ on an ellipse consists of eigenvalues $\pm i \lambda_l$, where each $\pm i \lambda_l$ has multiplicity $2$ and $\lambda_1 \ge \lambda_2 \ge \cdots \ge 0$.
He then provides asymptotics of the eigenvalues as the eccentricity tends to zero.
The largest number on the list is $\lambda_1 = \norm{\bm{A}} = \sqrt{||\bm{C}||^2-1}$, and we deduce from Bolt's estimates that, as $r \to 1$,
\begin{equation}\label{E:Bolt-asymptotic}
\norm{\bm{C}} \approx \sqrt{ 1 + \tfrac{1}{4} \big(\tfrac{r-1}{r+1}\big)^2 } = 1 + \tfrac{1}{32}(r-1)^2 
+ O(|r-1|^3).
\end{equation}
Comparing \eqref{E:Bolt-asymptotic} to \eqref{E:infinfty-expansion} and \eqref{E:zero-expansion}, we see the expansions for $\Lambda(\ce_r,\infty)$ and $\Lambda(\ce_r,0)$ are both asymptotically sharp as $r \to 1$.
But if we then compare the coefficients of $(r-1)^4$, we see that $\Lambda(\ce_r,0)$ is the better lower bound on $\norm{\bm{C}}$ near $r = 1$ (since $\tfrac{7}{200} > \frac{3}{128}$).
This information proves Theorem \ref{T:Cauchy-ellipse-lower-bound-intro}.

Known asymptotic expansions of elliptic integrals and theta functions also yield the behavior of $\Lambda(\ce_r,\infty)$ and $\Lambda(\ce_r,0)$ as $r \to \infty$:
\begin{equation*}
\lim_{r \to \infty} \Lambda(\ce_r,0) = \frac{2}{\sqrt{\pi}} = \lim_{r \to \infty} \Lambda(\ce_r,\infty).
\end{equation*}

\begin{remark}
The Mathematica generated plot in Figure \ref{Im:plots-of-Lambda-on-ellipse_antipodal_points} suggests $\Lambda(\ce_r,0) > \Lambda(\ce_r,\infty)$ for all $r \ge 1$, and it would be interesting to prove this.
It also appears in the plot that both functions are strictly increasing in $r$.
This is relatively straightforward to prove in the case of $\Lambda(\ce_r,\infty)$, but the $\Lambda(\ce_r,0)$ case seems to be harder.
\hfill $\lozenge$
\end{remark}

\begin{remark}
The above information should be considered together with a known upper bound on the norm of $\bm{C}$.
Adapting results by Feldman, Krupnik and Spitkovsky in \cite{FelKruSpi1996}, we see that for $r \ge 1$,
\begin{equation}\label{E:FKS-Cauchy-ellipse-upper-bound}
\norm{\bm{C}}_{L^2(\ce_r)} \le \sqrt{1 + \big(\tfrac{r-1}{r+1}\big)^2}.
\end{equation}
In particular, the norm of the Cauchy transform on any ellipse is always less than $\sqrt{2}$.
\hfill $\lozenge$
\end{remark}

\begin{figure}[htp]
\centering
\includegraphics[width=.95\textwidth]{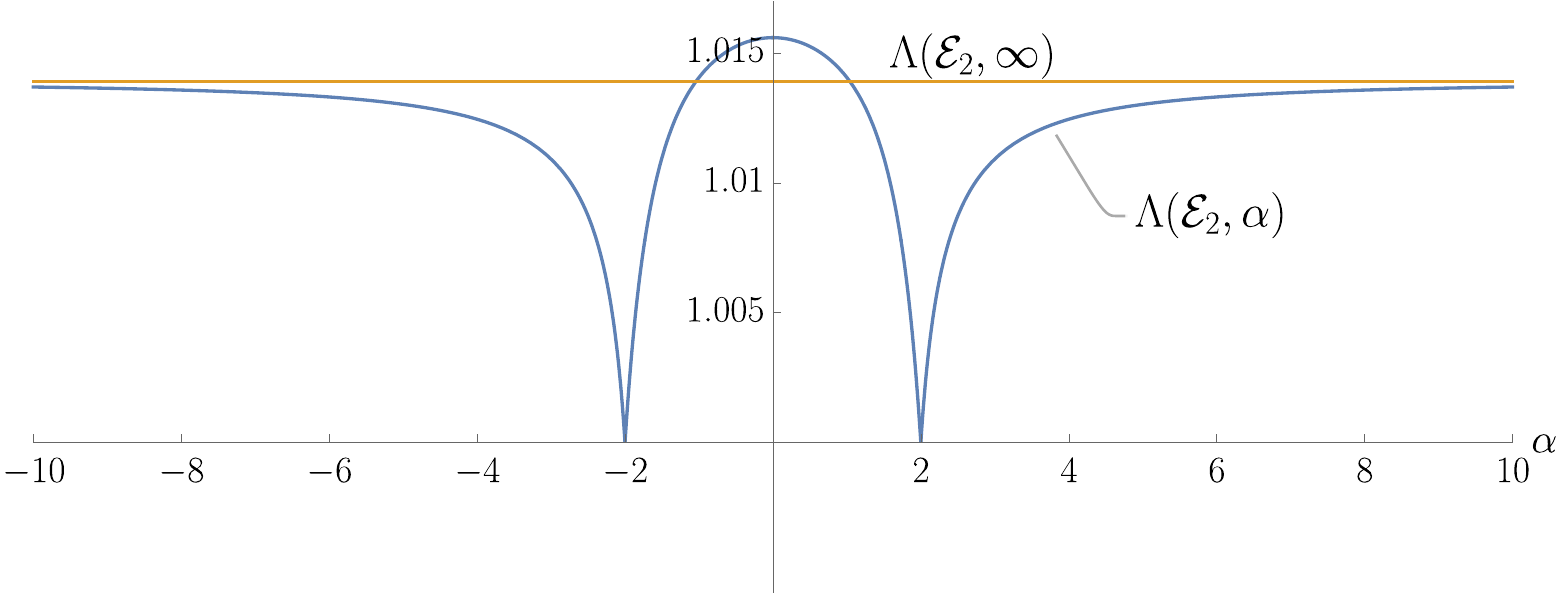}
\caption{For $\alpha \in \R$, plot of $\alpha \mapsto \Lambda(\ce_2, \alpha)$ shows maximum at $\alpha=0$.}
\label{Im:plots-of-Lambda-on-ellipse_r=2}
\end{figure}

\noindent $\bullet$ {\em Values of $\Lambda(\ce_r,z)$ for $z\neq 0,\infty$.}
The formulas provided in \eqref{E:norm-comp-gen-z-1}, \eqref{E:Theta_r-form-2} and \eqref{E:Szego-interior-formula} are valid for $z \in \Omega_+^r$.
Numerical evidence for specific $r$ values suggests that $z=0$ may in fact maximize $\Lambda_+(\ce_r,z)$.
This is illustrated in Figure \ref{Im:plots-of-Lambda-on-ellipse_r=2}, when $r=2$ and $z = \alpha$ is real valued.
In this picture, the interior domain corresponds to $\alpha \in (-2,2)$.

To compute $\Lambda_-(\ce_r,z)$, we need the exterior Riemann map.
The map from the unit disc $\D$ to $\Omega_-^r$ (the complement of the solid ellipse) is given by a Joukowski map (see \cite[page 270]{Nehari1952book}).
Such maps can be inverted explicitly and the desired $\Psi_r: \Omega_-^r \to \D$ obtained.
The particular map used to generate the figure (the $r=2$ case) for $|\alpha| > 2$ is given by
\begin{equation*}
\Psi_2(z) = \frac{3}{z+\sqrt{z^2-3}},
\end{equation*}
from which the exterior Szeg\H{o} kernel $S_-(z,z)$ can be obtained.
This is then combined with the Cauchy norm computation in \eqref{E:norm-comp-gen-z-1} to yield the exterior $\Lambda$-function.

\bibliographystyle{acm}
\bibliography{BarEdh21}

\begin{thebibliography}{10}

\bibitem{AhlforsBook}
{\sc Ahlfors, L.~V.}
\newblock {\em Complex analysis}, third~ed.
\newblock McGraw-Hill Book Co., New York, 1978.
\newblock An introduction to the theory of analytic functions of one complex
  variable.

\bibitem{Bar16}
{\sc Barrett, D.~E.}
\newblock Holomorphic projection and duality for domains in complex projective
  space.
\newblock {\em Trans. Amer. Math. Soc. 368}, 2 (2016), 827--850.

\bibitem{BarEdh17}
{\sc Barrett, D.~E., and Edholm, L.~D.}
\newblock The {L}eray transform: {F}actorization, dual {CR} structures, and
  model hypersurfaces in {CP}2.
\newblock {\em Adv. Math. 364\/} (2020), 107012.

\bibitem{BarEdh21}
{\sc Barrett, D.~E., and Edholm, L.~D.}
\newblock High frequency behavior of the {L}eray transform: model hypersurfaces
  and projective duality.
\newblock {\em (to appear) Indiana Univ. Math. J.\/} (2024).

\bibitem{Bell16}
{\sc Bell, S.~R.}
\newblock {\em The {C}auchy transform, potential theory and conformal mapping},
  second~ed.
\newblock Chapman \& Hall/CRC, Boca Raton, FL, 2016.

\bibitem{BergmanSchiffer1951}
{\sc Bergman, S., and Schiffer, M.}
\newblock Kernel functions and conformal mapping.
\newblock {\em Compositio Math. 8\/} (1951), 205--249.

\bibitem{Bol05}
{\sc Bolt, M.}
\newblock A geometric characterization: complex ellipsoids and the
  {B}ochner-{M}artinelli kernel.
\newblock {\em Illinois J. Math. 49}, 3 (2005), 811--826.

\bibitem{Bol07}
{\sc Bolt, M.}
\newblock A lower estimate for the norm of the {K}erzman-{S}tein operator.
\newblock {\em J. Integral Equations Appl. 19}, 4 (2007), 453--463.

\bibitem{Bolt07b}
{\sc Bolt, M.}
\newblock Spectrum of the {K}erzman-{S}tein operator for the ellipse.
\newblock {\em Integral Equations Operator Theory 57}, 2 (2007), 167--184.

\bibitem{BolRai15}
{\sc Bolt, M., and Raich, A.}
\newblock The {K}erzman-{S}tein operator for piecewise continuously
  differentiable regions.
\newblock {\em Complex Var. Elliptic Equ. 60}, 4 (2015), 478--492.

\bibitem{Burbea1982}
{\sc Burbea, J.}
\newblock The {C}auchy and the {S}zeg{o} kernels on multiply connected regions.
\newblock {\em Rend. Circ. Mat. Palermo (2) 31}, 1 (1982), 105--118.

\bibitem{HandbookElliptic71}
{\sc Byrd, P.~F., and Friedman, M.~D.}
\newblock {\em Handbook of elliptic integrals for engineers and scientists}.
\newblock Die Grundlehren der mathematischen Wissenschaften, Band 67.
  Springer-Verlag, New York-Heidelberg, 1971.
\newblock Second edition, revised.

\bibitem{CMM82}
{\sc Coifman, R.~R., McIntosh, A., and Meyer, Y.}
\newblock L'int\'{e}grale de {C}auchy d\'{e}finit un op\'{e}rateur born\'{e}
  sur {$L^{2}$} pour les courbes lipschitziennes.
\newblock {\em Ann. of Math. (2) 116}, 2 (1982), 361--387.

\bibitem{FelKruSpi1996}
{\sc Feldman, I., Krupnik, N., and Spitkovsky, I.}
\newblock Norms of the singular integral operator with {C}auchy kernel along
  certain contours.
\newblock {\em Integral Equations Operator Theory 24}, 1 (1996), 68--80.

\bibitem{GameKhav1989}
{\sc Gamelin, T.~W., and Khavinson, D.}
\newblock The isoperimetric inequality and rational approximation.
\newblock {\em Amer. Math. Monthly 96}, 1 (1989), 18--30.

\bibitem{Garnett1972}
{\sc Garnett, J.}
\newblock {\em Analytic capacity and measure}.
\newblock Lecture Notes in Mathematics, Vol. 297. Springer-Verlag, Berlin-New
  York, 1972.

\bibitem{Groth1953a}
{\sc Grothendieck, A.}
\newblock Sur certains espaces de fonctions holomorphes. {I}.
\newblock {\em J. Reine Angew. Math. 192\/} (1953), 35--64.

\bibitem{KerSte78a}
{\sc Kerzman, N., and Stein, E.~M.}
\newblock The {C}auchy kernel, the {S}zeg{\H o} kernel, and the {R}iemann
  mapping function.
\newblock {\em Math. Ann. 236}, 1 (1978), 85--93.

\bibitem{Koethe1953}
{\sc K\"{o}the, G.}
\newblock Dualit\"{a}t in der {F}unktionentheorie.
\newblock {\em J. Reine Angew. Math. 191\/} (1953), 30--49.

\bibitem{Krantz_scv_book}
{\sc Krantz, S.~G.}
\newblock {\em Function Theory of Several Complex Variables, 2nd Ed.}
\newblock Wadsworth \& Brooks/Cole Mathematics Series. Wadsworth \&
  Brooks/Cole, 1992.

\bibitem{Lan99}
{\sc Lanzani, L.}
\newblock Szegö projection versus potential theory for non-smooth planar
  domains.
\newblock {\em Indiana Univ. Math. J. 48}, 2 (1999), 537--555.

\bibitem{Lanzani2000}
{\sc Lanzani, L.}
\newblock Cauchy transform and {H}ardy spaces for rough planar domains.
\newblock In {\em Analysis, geometry, number theory: the mathematics of {L}eon
  {E}hrenpreis ({P}hiladelphia, {PA}, 1998)}, vol.~251 of {\em Contemp. Math.}
  Amer. Math. Soc., Providence, RI, 2000, pp.~409--428.

\bibitem{Mus92}
{\sc Muskhelishvili, N.~I.}
\newblock {\em Singular integral equations}.
\newblock Dover Publications, Inc., New York, 1992.
\newblock Boundary problems of function theory and their application to
  mathematical physics, Translated from the second (1946) Russian edition and
  with a preface by J. R. M. Radok.

\bibitem{Nehari1952book}
{\sc Nehari, Z.}
\newblock {\em Conformal mapping}.
\newblock McGraw-Hill Book Co., Inc., New York-Toronto-London, 1952.

\bibitem{Pommerenke1992}
{\sc Pommerenke, C.}
\newblock {\em Boundary behaviour of conformal maps}, vol.~299 of {\em
  Grundlehren der mathematischen Wissenschaften [Fundamental Principles of
  Mathematical Sciences]}.
\newblock Springer-Verlag, Berlin, 1992.

\bibitem{Ransford1995}
{\sc Ransford, T.}
\newblock {\em Potential theory in the complex plane}, vol.~28 of {\em London
  Mathematical Society Student Texts}.
\newblock Cambridge University Press, Cambridge, 1995.

\bibitem{SebeSilva1950}
{\sc Sebasti\~{a}o~e Silva, J.}
\newblock Analytic functions and functional analysis.
\newblock {\em Portugal. Math. 9\/} (1950), 1--130.

\bibitem{Singh1960}
{\sc Singh, V.}
\newblock An integral equation associated with the {S}zeg\"{o} kernel function.
\newblock {\em Proc. London Math. Soc. (3) 10\/} (1960), 376--394.

\bibitem{SzegoAMM1950}
{\sc Szeg\"o, G.}
\newblock Conformal mapping of the interior of an ellipse onto a circle.
\newblock {\em Amer. Math. Monthly 57\/} (1950), 474--478.

\bibitem{WhitWatBook}
{\sc Whittaker, E.~T., and Watson, G.~N.}
\newblock {\em A course of modern analysis}.
\newblock Cambridge Mathematical Library. Cambridge University Press,
  Cambridge, 1996.
\newblock An introduction to the general theory of infinite processes and of
  analytic functions; with an account of the principal transcendental
  functions, Reprint of the fourth (1927) edition.

\bibitem{Zhu2021}
{\sc Zhu, K.}
\newblock The {B}erezin transform and its applications.
\newblock {\em Acta Math. Sci. Ser. B (Engl. Ed.) 41}, 6 (2021), 1839--1858.

\end{thebibliography}
 
\end{document}